\documentclass[11pt]{amsart}
\usepackage{amssymb,amsthm}
\usepackage{amsmath}
\usepackage{amsfonts}
\usepackage{enumerate}
\usepackage{dsfont}
\usepackage{cite}
\usepackage[colorlinks, citecolor=red]{hyperref}
\usepackage{mathrsfs}
\usepackage{epsfig}
\usepackage{lscape}
\usepackage{subfigure}
\usepackage{booktabs}
\usepackage{epstopdf} 
\usepackage{caption}
\usepackage{algorithm}
\usepackage{algpseudocode}
\usepackage{multirow}
\usepackage{geometry}
\usepackage{paralist}
\usepackage{enumerate}
\usepackage{url}
\usepackage{graphicx,cite}
\usepackage{longtable}
\usepackage{tikz}
\usepackage{adjustbox}
\usepackage{CJKutf8}
\usepackage{hyperref}
\usepackage{url}

\newtheorem{definition}{Definition}[section]

\newtheorem{prop}[definition]{Proposition}
\newtheorem{thm}[definition]{Theorem}
\newtheorem{lemma}[definition]{Lemma}

\newtheorem{remark}[definition]{Remark}

\newtheorem{assumption}{Assumption}[section]

\date{}

\begin{document} 
		\baselineskip 18pt
		\bibliographystyle{plain}

\title
[Randomized CGLS]{Randomized conjugate gradient least squares}
	
		\author{Yun Zeng}
	\address{School of Mathematical Sciences, Beihang University, Beijing, 100191, China. }
	\email{zengyun@buaa.edu.cn}
	
	\author{Jian-Feng Cai}
	\address{Department of Mathematics,	Hong Kong University of Science and Technology, Clear Water Bay, Kowloon, Hong Kong SAR, China. }
	\email{jfcai@ust.hk}	
	
	\author{Deren Han}
	\address{LMIB of the Ministry of Education, School of Mathematical Sciences, Beihang University, Beijing, 100191, China. }
	\email{handr@buaa.edu.cn}

	\author{Jiaxin Xie}
	\address{LMIB of the Ministry of Education, School of Mathematical Sciences, Beihang University, Beijing, 100191, China. }
	\email{xiejx@buaa.edu.cn}
	
		\begin{abstract}
			We develop a novel randomized conjugate gradient least squares (RCGLS) method for solving least-squares problems, in which iterative sketching is employed at each step to reduce the dimension and hence the computational cost. In particular, we propose a new perspective on the classical CGLS method, where the next descent direction is determined via a constraint correction problem associated with the gradient. Based on this insight, we replace the gradient with a randomized coordinate gradient that naturally satisfies the variance reduction property, leading directly to the proposed RCGLS method. We prove that RCGLS converges linearly in expectation, with a better convergence bound compared to  the randomized coordinate descent  method. Furthermore, we investigate an implementation of the method that avoids full-dimensional vector operations, which are the major bottleneck of vanilla RCGLS for sparse matrices and render it impractical. We also show how to apply the RCGLS method to solve the ridge regression problem, yielding a lightweight, parallelizable, and accelerated method for such problems. Numerical experiments are provided to confirm our results.
			\end{abstract}

	\maketitle	
	
	\let\thefootnote\relax\footnotetext{Key words: conjugate gradient least squares, randomized method, iterative sketching, coordinate descent method, variance reduction, ridge regression}
\let\thefootnote\relax\footnotetext{Mathematics subject classification (2020): 65F10, 65F20, 90C25, 15A06, 68W20}		
		
		\section{Introduction}

	Solving the linear least-squares   problem
	\begin{equation} \label{LS_pro}
		\min_{x \in \mathbb{R}^{d}} \  \frac{1}{2} \|Ax - b\|_{2}^{2}, \ \  A \in \mathbb{R}^{n \times d}, \ b \in \mathbb{R}^{n},
	\end{equation}
	is a cornerstone of computational science and engineering \cite{bjorck2024numerical,scott2025sparse}. In large-scale and sparse settings, iterative methods based on the conjugate gradient (CG) method, which are implicitly applied to the normal equations \(A^\top A x = A^\top b\), are often preferred. 
	The CG for least squares (CGLS) method \cite{hestenes1952methods}, also referred to as CGNR \cite[Section 11.3.9]{golub2013matrix}, is derived via a simple algebraic rearrangement of the standard CG method. A key advantage of CGLS is that it can avoid the explicit formation of the normal matrix, making it widely applicable in practice. In particular, CGLS recursively updates the original residual \(r = b - Ax\), rather than the normal-equation residual \(A^\top r\), and minimizes the quantity \(\|r\|_2\) at each iteration. We note that CGLS is mathematically equivalent to the Lanczos bidiagonalization-based LSQR method \cite{paige1982lsqr} when $A$ has full column rank.
	
	The age of big data has sparked growing interest in solving large-scale computational problems. Such problems arise extensively in data-driven applications, including machine learning, data science, and scientific computing. The coefficient matrix $A$ involved in these tasks is often high-dimensional, making it essential to divide the original computational task into smaller, more manageable subproblems. Hence, traditional iterative methods built upon full matrix-vector products are no longer fully applicable and require further improvement.
	Recently, randomized strategies that sample partial information from $A$ to update iterates have become increasingly popular \cite{han2024randomized,zeng2026stochastic,zeng2024adaptive,zeng2025adaptive,Str09,han2026pseudoinverse,loizou2020momentum,Gow15,leventhal2010randomized}. In addition to their theoretical benefits, numerous numerical experiments have demonstrated the effectiveness of randomized methods \cite{Str09,Gow15,han2024randomized,zeng2025adaptive}.
	Given this situation, we intend to introduce randomization techniques into the CGLS method to efficiently solve large-scale least-squares  problems.

		\subsection{Our contributions}
		In this paper, we present a generic RCGLS algorithmic framework for solving the least-squares  problem \eqref{LS_pro}. The main contributions of this work are as follows.
		\begin{enumerate}
			\renewcommand{\labelenumi}{\arabic{enumi}.}
			\item We propose a novel and flexible perspective for deriving the classical CGLS method, in which the subsequent descent direction is determined by solving a constraint correction problem associated with the gradient; see Section \ref{section-per-CGLS}. To the best of our knowledge, this is the first derivation from such a viewpoint. Building on this insight, we employ randomized sketching matrices $S_k\in\mathbb{R}^{d\times q}$ to extract gradient information at each step, where $S_k$ is drawn from a user-defined distribution $\mathcal{D}$, leading directly to the proposed RCGLS method. Notably, the sketched gradient is designed as a generalization of the randomized coordinate gradient, so that it naturally inherits the variance reduction property; see Remarks \ref{remark-VR-RCD2} and \ref{remark-VR}.
			\item We prove that RCGLS converges linearly in expectation, with a convergence factor dependent on both the choice of the distribution $\mathcal{D}$ and the coefficient matrix $A$; see Theorem \ref{RCGLS_rate}. In addition, its convergence upper bound can be tighter than that of the randomized coordinate descent (RCD) method \cite{leventhal2010randomized}. To facilitate efficient implementation, we first establish a representation adopting a rescaled search direction, which recovers the classic CG modification applied to the normal equations when $\mathcal{D}$ is a fixed distribution with $S_k=I$ \cite[Section 9]{hestenes1952methods}; see Remark \ref{modification}. Based on this, we further provide an equivalent formulation inspired by the variable transformation techniques in \cite{lee2013efficient, fercoq2015accelerated, zeng2026stochastic}, which largely avoids full-dimensional vector operations when the coefficient matrix is sparse.
			\item As a practical application, we extend the RCGLS method to solve ridge regression, where an $\ell_2$-regularization term has been incorporated into the standard least-squares problem. By exploiting the block-orthogonal structure of the related augmented linear system, we reformulate ridge regression into two alternative least squares problems of different dimensions. Applying the RCGLS method to these reformulations yields a lightweight, parallelizable, and accelerated solver for such problems.  Numerical experiments are provided to validate the theoretical findings and demonstrate the efficiency of the proposed method.
		\end{enumerate}
		
\subsection{Related Work}

\subsubsection{Iterative methods for least squares}
\label{section-1-2-1}

For solving large  linear systems or least-squares problems, iterative methods like CGLS \cite{hestenes1952methods}, LSQR \cite{paige1982lsqr}, and LSMR \cite{fong2011lsmr} are often preferred over direct methods, as they typically require much less storage. However, these methods rely on full matrix–vector multiplications, which become infeasible when the matrix $A$ is extremely large and cannot be fully stored in memory. To address this limitation, a class of iterative methods that only access partial information of $A$ at each iteration has been developed.  

If the linear system $Ax=b$ is consistent, the Kaczmarz method, also known as the algebraic reconstruction technique (ART) \cite{herman1993algebraic,gordon1970algebraic}, is a classic and efficient row-action iterative solver. The method alternates between selecting a row of $A$ and updating the current solution via projection onto the hyperplane defined by the chosen row.  A significant advance is the randomized Kaczmarz (RK) method proposed by Strohmer and Vershynin \cite{Str09}, which achieves linear convergence in expectation under row-norm-proportional sampling. Subsequently, there is a large amount of work on the development of the Kaczmarz-type methods, including block Kaczmarz methods \cite{Nec19,needell2014paved,xie2025randomized,Gow15}, accelerated RK methods \cite{sun2025connecting,liu2016accelerated,han2026pseudoinverse,loizou2020momentum,zeng2024adaptive,wang2026linear,rieger2023generalized,su2024greedy}, randomized Douglas-Rachford methods \cite{han2024randomized}, etc. 
 
However, for inconsistent systems, Needell \cite{needell2010randomized} proved that RK-type methods only converge within a bounded radius (i.e., the convergence horizon) around the least-squares solution; see also \cite{bai2021greedy,ma2015convergence,zeng2025adaptive} for further discussions. To address this issue, Zouzias and Freris \cite{Zou12} modified the standard RK method and proposed the randomized extended Kaczmarz (REK) method. A large body of work has further advanced REK-type methods, including block and deterministic variants \cite{Du20Ran,popa1998extensions,popa1999characterization,bai2019partially,zeng2025adaptive}, the greedy randomized augmented Kaczmarz  method \cite{bai2021greedy}, and the randomized extended Gauss-Seidel (REGS) method \cite{Du19,ma2015convergence}, among others.
We note that although REK-type methods converge to the unique minimum Euclidean norm least-squares solution $A^\dagger b$, they require accessing both row and column information of $A$ at each iteration. 

Another class of randomized methods for least-squares problems only requires column information of $A$ per iteration, namely the randomized Gauss-Seidel (RGS) method, also referred to as the  RCD method \cite{leventhal2010randomized}. The RCD method converges linearly in expectation to the least-squares solution and, in theory,  it can be employed to solve least-squares problems of any rank, either overdetermined or underdetermined. In this paper, we combine these advantages of the RCD method with our newly derived perspective on the classical CGLS method to develop the novel RCGLS method. Similar to RCD, the proposed RCGLS method converges linearly in expectation while further achieving a tighter convergence upper bound compared with the standard RCD method.

\subsubsection{Acceleration methods without full-dimensional operations}

Although Nesterov momentum \cite{nesterov1983method,nesterov2003introductory} can be theoretically adopted to accelerate the RCD method, Nesterov \cite{nesterov2012efficiency} noted that the resulting accelerated scheme may suffer from high per-iteration complexity. Specifically, the update of auxiliary variables (e.g., $y^k$) requires full-dimensional vector operations, which severely limits the practical efficiency of the acceleration method. This bottleneck also exists for heavy ball momentum (HBM) \cite{polyak1964some} when accelerating randomized iterative methods \cite{loizou2020momentum,zeng2026stochastic}.

For the accelerated RCD method, Lee and Sidford \cite{lee2013efficient} first eliminated full-dimensional operations for unconstrained convex quadratic minimization by carefully modifying the standard Nesterov acceleration scheme. Fercoq and Richt{\'a}rik \cite{fercoq2015accelerated} further extended this framework to the accelerated randomized proximal coordinate gradient method under more general settings, where the objective function satisfies certain general structural conditions. Additional efforts have further exploited the inherent structure of regularized empirical risk minimization problems to entirely avoid full-dimensional vector computations \cite{lin2015accelerated}. Furthermore, the accelerated random sketch descent algorithm developed in \cite{necoara2021randomized} enables efficient acceleration without full-dimensional operations, provided that the sketching matrix $S$ is sparse and the gradient evaluation of the objective function $\nabla f(\alpha v + \beta u)$ can be efficiently computed for all $\alpha, \beta \in \mathbb{R}$ and $v, u \in \mathbb{R}^n$.

For the HBM method, the stochastic momentum technique was first proposed in \cite{loizou2020momentum} to avoid full-dimensional vector operations. 
Recently, \cite{zeng2026stochastic} developed an efficient implementation framework for linearly constrained convex optimization, where full-dimensional vector operations can be  avoided for sparse matrix $A$, provided that the certain gradient terms can be  evaluated. While our work also eliminates these operations when  $S_k^\top A^\top$ is sparse, our derivation is distinct. We first exploit the specific expressions of the RCGLS parameters to establish a representation adopting a rescaled search direction, whereas \cite{zeng2026stochastic} focuses on variable transformations for arbitrary parameters. In particular, when $\mathcal{D}$ is a fixed distribution with $S=I$, this representation recovers a classic CG modification applied to the normal equations, which is one of the modifications identified as being of interest in \cite[Section 9]{hestenes1952methods}. Based on this, we then derive an equivalent formulation for efficient implementation.

\subsubsection{Randomized methods for ridge regression}

Ridge regression, also known as Tikhonov regularization, is an extension of standard least-squares problems, which stabilizes ill-posed and rank-deficient systems by imposing an $\ell_2$-regularization penalty. It has been widely applied in numerous scientific and engineering fields, including finance \cite{ahn2012using}, image processing \cite{xue2009local}, and machine learning \cite{rajan2022efficient}.  Given that ridge regression can be reformulated as a linear system, the randomized iterative methods discussed in Section \ref{section-1-2-1}  have been extended to solve this problem.

In \cite{ivanov2013kaczmarz}, Ivanov and Zhdanov  solved ridge regression by directly applying the classical RK method to the augmented regularized normal equation. Then, Hefny et al. \cite{hefny2017rows} verified that such naive direct implementations suffer from suboptimal convergence performance, and accordingly developed improved RK and RGS variants under the unified RCD framework. More recently, Gazagnadou et al. \cite{gazagnadou2022ridgesketch} extended the sketch-and-project method \cite{Gow15} to the related normal equation, yielding the RidgeSketch method, and further incorporated momentum to enhance performance.  However, those existing randomized ridge regression solvers suffer from limitations. The improved RK and RGS variants lack acceleration, while RidgeSketch involves per-iteration least-squares subproblems, which can not be parallelizable. In this paper, the proposed RidgeRCGLS method can  inherit the lightweight computation, parallelizability, and accelerated convergence of the RCGLS method, offering an efficient and flexible solver for large-scale ridge regression problems.

\subsection{Notations}

For any random variables $\xi$ and $\zeta_0$, we use $\mathbb{E}[\xi]$ and $\mathbb{E}[\xi\mid \zeta=\zeta_0]$ to denote the expectation of $\xi$ and the conditional expectation of
$\xi$ given $\zeta = \zeta_0$.
  For any matrix $A \in \mathbb{R}^{n \times d}$, we use $A_{i, :}$, $A_{:, j}$, $A^\top$, $\|A\|_F$, $\sigma_{\min}(A)$, $\operatorname{Range}(A)$, and $\operatorname{Null}(A)$ to denote the $i$-th row, the $j$-th column, the transpose, the Frobenius norm, the smallest nonzero singular value, the column space, and the null space of $A$, respectively. For a given index set $\mathcal{J}$,
we use $A_{\mathcal{J} ,:}$ and $A_{:,\mathcal{J} }$ to denote the row and column submatrix indexed by $\mathcal{J} $, respectively. The cardinality of the set $\mathcal{J}$ is denoted by $|\mathcal{J}|$.
For any vector $b \in \mathbb{R}^n$, we use $b_i$ and $\|b\|_2$ to denote the $i$-th  entry and the Euclidean norm of $b$, respectively. In addition, for any positive semidefinite matrix $H \in \mathbb{R}^{n \times n}$, we define the $H$-inner product and the induced $H$-norm by
$
\langle x,y \rangle_{H}= \langle x, Hy \rangle$ and $ \|x\|_{H}=\sqrt{\langle x, x \rangle_{H}}
$, respectively. Finally, we adopt the convention that $\frac{0}{0} = 0$ throughout this paper.

\subsection{Organization}

The remainder of the paper is organized as follows. In Section 2, we propose the RCGLS method and show its linear convergence rate. In Section 3, we show that RCGLS can be equivalently reformulated to substantially avoid full-dimensional operations. In Section 4, we extend the RCGLS method to solve ridge regression. In Section 5, we perform some numerical experiments to show the effectiveness of the proposed method. We conclude the paper in Section 6. 
					
\section{Randomized conjugate gradient least squares} \label{Section-2}

In this section, we first present a novel derivation of the classical CGLS method \cite[Section 10]{hestenes1952methods}. Based on this new interpretation, we further develop the RCGLS method. We theoretically prove that the proposed RCGLS method achieves linear convergence in expectation and admits a tighter convergence factor than the classical RCD method.

\subsection{A new derivation of CGLS}
\label{section-per-CGLS}
For convenience, we define $f(x) = \frac{1}{2} \|Ax - b\|_{2}^{2}$. Accordingly, the least-squares problem \eqref{LS_pro} is equivalent to $\min_{x \in \mathbb{R}^{d}} f(x)$.
 At the $k$-th iteration, let $p^k$ be a descent direction. We adopt the standard update rule
$$
x^{k+1} = x^{k} + \mu_{k} p^{k},
$$
where the stepsize $\mu_k$  is determined via exact line search 
\begin{equation}
	\label{exact-line-pa}
	\min_{\mu \in \mathbb{R}} f(x^{k} + \mu p^{k}).
\end{equation}
Using the quadratic structure of $f$, the optimal stepsize admits the closed-form
\begin{equation}
	\label{exact-line-step}
	\mu_k= \frac{\langle r^{k}, Ap^{k} \rangle}{\|Ap^{k}\|_{2}^{2}},
\end{equation}
where $r^k := b - Ax^k$ denotes the residual vector.

Let $x^*$ be a solution to \eqref{LS_pro}, and set $p^k = x^* - x^k$. Substituting this direction into the stepsize formula and using the optimality condition $A^\top b = A^\top A x^*$, we derive
$$
\mu_{k}=\frac{\langle b-Ax^k, A(x^*-x^{k}) \rangle}{\|A(x^*-x^{k})\|_{2}^{2}}=\frac{\langle A^\top(b-Ax^k), x^*-x^{k} \rangle}{\|A(x^*-x^{k})\|_{2}^{2}}
=\frac{\langle A^\top A(x^*-x^k), x^*-x^{k} \rangle}{\|A(x^*-x^{k})\|_{2}^{2}}=1.
$$
Consequently, $x^{k+1}=x^k+\mu_k p^k = x^*$, so the optimal solution is obtained in a single step. We thus call the error vector $x^* - x^k$ an \emph{ideal descent direction}. A practical iteration therefore requires search directions to approximate $e^k$ as closely as possible. 

From the first-order optimality of exact line search \eqref{exact-line-pa}, we have $\langle \nabla f(x^{k+1}), p^k \rangle = 0$, which yields
\begin{equation} \label{orth}
	\langle x^*-x^{k+1} , p^k \rangle_{A^\top A} = \langle A^\top A(x^* - x^{k+1}), p^k \rangle = -\langle \nabla f(x^{k+1}), p^k \rangle = 0.
\end{equation}
This shows that the ideal descent direction $x^*-x^{k+1} $ is $A^\top A$-conjugate to the previous search direction $p^k$.
Hence, we require that the subsequent direction $p^{k+1}$  satisfies $\langle p^{k+1}, p^{k} \rangle_{A^\top A}=0$. Nevertheless,  enforcing only this conjugacy constraint  cannot guarantee a reliable descent direction. To ensure favorable local reduction, one may further require the new direction to align with the negative gradient $-\nabla f(x^{k+1})$. Combining the conjugacy constraint and gradient alignment, we construct $p^{k+1}$ by solving the constraint correction problem
\begin{equation} \label{cons1}
	\min\limits_{p \in \mathbb{R}^{d}} \|p-(-\nabla f(x^{k+1})) \|_{A^{\top}A}^{2}, \quad \text{subject to} \quad \langle p, p^{k} \rangle_{A^{\top}A}=0.
\end{equation}
Note that $\nabla f(x^{k+1})=A^\top(Ax^{k+1}-b)=-A^\top r^{k+1}$. One minimizer of \eqref{cons1} takes the form
\begin{equation} \nonumber
	p^{k+1} = A^\top r^{k+1} + \tau_{k} p^{k}, \quad \tau_{k} = -\frac{\langle AA^\top r^{k+1}, Ap^{k} \rangle}{\|Ap^{k}\|_{2}^{2}}.
\end{equation}
This directly recovers the standard CGLS iteration \cite[Section 10]{hestenes1952methods}
\begin{equation} \label{CGLS_IP}
	\left\{\begin{aligned}
		&\mu_k = \frac{\langle r^k, Ap^k \rangle}{\|Ap^{k}\|_{2}^{2}}, \\
		&x^{k+1} = x^k + \mu_k p^k, \\
		&r^{k+1} = r^k - \mu_k Ap^k, \\
		&\tau_k = -\frac{\langle AA^\top r^{k+1}, Ap^k \rangle}{\|Ap^k\|_{2}^{2}}, \\
		&p^{k+1} = A^\top r^{k+1} + \tau_{k} p^{k}.
	\end{aligned}\right.
\end{equation}
 We note that $\mu_k$ and $\tau_k$ can also be expressed in the following equivalent forms
\(
\mu_{k}=\frac{\|A^\top r^k\|^2_2}{\|Ap^k\|^2_2}\) and \(
\tau_{k}=\frac{\|A^\top r^{k+1}\|^2_2}{\|A^\top r^k\|_{2}^{2}}
\),
as can be seen from \eqref{simp-form} and \eqref{simp-tau}, together with the gradient orthogonality property inherited from \cite[Theorem 5.1]{hestenes1952methods}, i.e., $\langle A^{\top}r^{k+1}, A^{\top}r^{k} \rangle=0$.

\subsection{The RCGLS method}

Building on the new perspective of CGLS presented in the previous subsection, we naturally derive the proposed RCGLS method. The underlying idea is straightforward. We require each new search direction to align with the \emph{randomized coordinate gradient} $S_kS_k^\top\nabla f(x^{k})$, where $S_k\in\mathbb{R}^{d\times q}$ denotes a randomized sketching matrix sampled from a user-specified distribution $\mathcal{D}$. In particular, in contrast to \eqref{cons1}, the direction $p^{k+1}$ is now obtained by
\begin{equation} \label{cons1R}
	\min\limits_{p \in \mathbb{R}^{d}} \|p-(-S_kS_k^\top\nabla f(x^{k+1})) \|_{A^{\top}A}^{2}, \quad \text{subject to} \quad \langle p, p^{k} \rangle_{A^{\top}A}=0.
\end{equation}

Let $p^0 = S_0 S_0^\top A^\top r^0$ and $v^0 = Ap^0$. 
 Following a similar argument as in the previous subsection, we obtain the following iteration scheme of the proposed RCGLS method
\begin{equation} \label{RCGLS_scheme}
	\left\{\begin{aligned}
		&	\mu_k = \frac{\langle r^k, v^k \rangle}{\|v^{k}\|_{2}^{2}}, \\
		&	x^{k+1} = x^k + \mu_k p^k, \\
		&	r^{k+1} = r^k - \mu_k v^k, \\
		&	\tau_k = -\frac{\langle A S_{k+1} S_{k+1}^{\top} A^\top r^{k+1}, v^{k} \rangle}{\|v^{k}\|_{2}^{2}}, \\
		&	p^{k+1} = S_{k+1} S_{k+1}^{\top} A^{\top}r^{k+1} + \tau_{k} p^{k}, \\
		&	v^{k+1} = AS_{k+1} S_{k+1}^{\top} A^{\top}r^{k+1}+ \tau_{k} v^{k}.
	\end{aligned}\right.
\end{equation}
We further show that the stepsize $\mu_{k}$ in \eqref{RCGLS_scheme} can be rewritten as 
\begin{equation}
	\label{simp-form}
	\mu_k = \frac{\|S_k^\top A^\top r^k\|_2^2}{\|v^k\|_2^2}.
\end{equation}
Indeed, for $k=0$, this identity holds directly from the definition of $v^0$. For $k \geq 1$, we reformulate the inner product $\langle r^k, v^k \rangle$ as
$$
\langle r^k, v^k \rangle=\langle r^k, AS_{k} S_{k}^{\top} A^{\top}r^{k} + \tau_{k-1} v^{k-1} \rangle=\|S_{k}^{\top}A^{\top}r^{k}\|_{2}^{2}+\tau_{k-1} \langle A^{\top}r^k, p^{k-1} \rangle=\|S_{k}^{\top}A^{\top}r^{k}\|_{2}^{2},
$$
where the last equality follows from the first-order optimality condition of exact line search \eqref{exact-line-pa}. Specifically, the orthogonality relation $\langle A^\top r^k, p^{k-1} \rangle=-\langle \nabla f(x^{k}), p^{k-1} \rangle=0$ holds at the previous iteration, which eliminates the cross term. Besides, we also have 
\begin{equation}
	\label{simp-tau}
	\begin{aligned}
		 \tau_k&=-\frac{\langle  S_{k+1}^{\top} A^\top r^{k+1}, S_{k+1}^{\top} A^\top v^{k} \rangle}{\|v^{k}\|_{2}^{2}}=\frac{\langle  S_{k+1}^{\top} A^\top r^{k+1}, S_{k+1}^{\top} A^\top (r^{k+1}-r^k) \rangle}{\mu_k\|v^{k}\|_{2}^{2}}\\
		 &=\frac{\langle  S_{k+1}^{\top} A^\top r^{k+1}, S_{k+1}^{\top} A^\top (r^{k+1}-r^k) \rangle}{\langle r^k, v^k \rangle}=\frac{\|S_{k+1}^\top A^\top r^{k+1}\|^2_2-\langle S_{k+1}^\top A^\top r^{k+1},S_{k+1}^\top A^\top r^{k}\rangle}{\|S_{k}^\top A^\top r^{k}\|^2_2},
	\end{aligned}
\end{equation}

Now, we are ready to present the RCGLS method, which is formally described in
Algorithm \ref{RCGLS_Alg}. 

		\begin{algorithm}[htpb]
			\caption{Randomized CGLS (RCGLS) \label{RCGLS_Alg}}
			\begin{algorithmic}
				\Require
				$A \in \mathbb{R}^{n\times d}$, $b \in \mathbb{R}^n$, distribution $\mathcal{D}$, $k=0$, and the initial point $x^{0} \in \mathbb{R}^d$.
				\begin{enumerate}
					\item[1:] Randomly select a sampling matrix $S_{0}$ from $\mathcal{D}$.
					
					\item[2:] Set $r^{0}=b-Ax^{0}$, $p^{0}=S_{0}S_{0}^{\top}A^{\top}r^{0}$, and $v^{0}=AS_{0}S_{0}^{\top}A^{\top}r^{0}$.
					
					\item[3:] Set $\mu_{k}=\frac{\|S_{k}^{\top}A^{\top}r^{k}\|_{2}^{2}}{\|v^{k}\|_{2}^{2}}$.
					
					\item[4:]  Update $x^{k+1}=x^{k}+\mu_{k} p^{k}$ and $r^{k+1}=r^{k}-\mu_{k} v^{k}$.
					
					\item[5:]  Randomly select a sampling matrix $S_{k+1} $ from $\mathcal{D}$.
					
					\item[6:]  Compute
					$$
					\begin{aligned}
						\tau_k&=-\frac{\langle A S_{k+1} S_{k+1}^{\top} A^\top r^{k+1}, v^{k} \rangle}{\|v^{k}\|_{2}^{2}},\\ 
						p^{k+1}&= S_{k+1} S_{k+1}^{\top} A^{\top}r^{k+1}+\tau_{k} p^{k}, \\
						v^{k+1}&= AS_{k+1} S_{k+1}^{\top} A^{\top}r^{k+1}+\tau_{k} v^{k}.
					\end{aligned}
					$$
					\item[7:] If the stopping rule is satisfied, stop and go to output. Otherwise, set $k=k+1$ and return to Step $3$.
				\end{enumerate}
				
				\Ensure
				The approximate solution $x^k$.
			\end{algorithmic}
		\end{algorithm}
		
		\begin{remark} \label{remark-VR-RCD}
		We refer to $S_kS_k^\top\nabla f(x^{k})$ as the randomized coordinate gradient, because if we use only this gradient, the corresponding method
		\begin{equation}
				\label{GRCD}
				x^{k+1}=x^k-\alpha_k S_kS_k^\top\nabla f(x^{k})
			\end{equation}
			is a generalized RCD (GRCD) method \cite{zeng2026stochastic, han2026pseudoinverse}. Indeed, choose $\mathcal{D}$  as follow:
			$S= \frac{e_i}{\|A_{:,i}\|_2}$ with probability $\text{Prob}(i_k =i)=\frac{\|A_{:,i}\|^2_2}{\|A\|^2_F}$. Then the sketched gradient satisfies
			$$
			S_kS_k^\top\nabla f(x^{k})=\frac{\langle A_{:,i_k}, Ax^k-b\rangle }{\|A_{:,i_k}\|^2_2}e_{i_k}=-\frac{\langle A_{:,i_k}, r^k\rangle }{\|A_{:,i_k}\|^2_2}e_{i_k}.
			$$
			With the stepsize $\alpha_k=1 $, the sketched gradient method \eqref{GRCD} reduces to
			\begin{equation}
				\label{rcd-c}
				x^{k+1}= x^k+\frac{\langle A_{:,i_k}, r^k\rangle}{\|A_{:,i_k}\|^2_2}e_{i_k},
			\end{equation}
			which is exactly the RCD method \cite{wright2015coordinate,nesterov2012efficiency,fercoq2015accelerated}, also known as the RGS method \cite{han2026pseudoinverse,leventhal2010randomized}.
		\end{remark}

	\begin{remark}\label{remark-VR-RCD2}
	The reason we adopt the randomized coordinate gradient $S_kS_k^\top\nabla f(x^{k})$ instead of the stochastic gradient  lies in its inherent variance reduction (VR) property \cite{gower2020variance}. Let $g^k$ denote a gradient estimator of $\nabla f(x^k)$. Based on this estimator, the standard approximate gradient update takes the form $x^{k+1} = x^k - \gamma g^k$ with a constant step size $\gamma > 0$. To guarantee convergence with a constant stepsize, the gradient estimator $g^k$ is required to satisfy the vanishing variance condition
	\begin{equation}
		\label{VRP-0501-1}
		\lim_{k \rightarrow \infty} \mathbb{E}\left[\|g^k - \nabla f(x^k)\|^2_2\right] = 0.
	\end{equation}
	Property \eqref{VRP-0501-1} is precisely the VR property \cite{gower2020variance}. For the randomized coordinate gradient estimator, this property holds naturally. Specifically,
	\[
	\lim_{x^k \to x^*} \mathbb{E}\left[\|SS^\top\nabla f(x^k) - \nabla f(x^k)\|^2_2\right] 
	= \lim_{x^k \to x^*} \mathbb{E}\left[\|(SS^\top - I)\nabla f(x^k)\|^2_2\right] = 0,
	\]
	where the last equality follows from the optimality condition $\nabla f(x^*) = 0$.
	
	In fact, another widely used approach is stochastic gradient descent (SGD), which employs the stochastic gradient estimator $A^\top S_k S_k^\top (Ax^k - b)$ instead of the randomized coordinate gradient $S_k S_k^\top \nabla f(x^k) = S_k S_k^\top A^\top (Ax^k - b)$. Nevertheless, the standard SGD estimator generally fails to satisfy the VR property \eqref{VRP-0501-1} unless the interpolation condition holds \cite[Section 4.3]{garrigos2023handbook} or the linear system $Ax = b$ is consistent \cite[Section 5.1]{zeng2025adaptive}. We note that for the consistent case, the relationship between stochastic heavy-ball momentum methods and the stochastic conjugate gradient normal equation error (CGNE) method has been investigated in \cite{zeng2024adaptive}.
	\end{remark}

\subsection{Convergence analysis}
In this subsection, we investigate the expected linear convergence of the proposed RCGLS method. We demonstrate that the RCGLS method achieves an improved convergence factor compared with the vanilla GRCD method \eqref{GRCD}. We first state the necessary assumption on the distribution $\mathcal{D}$. We note that numerous practical sampling strategies satisfy this assumption, including partition sampling, uniform coordinate sampling, and Gaussian sketching. For a comprehensive discussion regarding the construction and selection of valid distributions, we refer readers to \cite[Section 5]{xie2025randomized}.

\begin{assumption} \label{ass}
	Let $\mathcal{D}$ denote the distribution from which the randomized sketching matrices are sampled. We assume that $\mathbb{E}[SS^{\top}]$ is positive definite.
\end{assumption}

For convenience, let us introduce some notations.  We define
\begin{equation}
	\label{def-gamma}
	\gamma_0:=1\ \ \text{and} \ \ \gamma_{k}:=\inf\limits_{S_k \sim \mathcal{D}} \left\{\left(1-\frac{\langle S_{k} S_{k}^{\top} A^{\top}r^{k}, p^{k-1} \rangle_{A^{\top}A}^{2}}{\|S_{k}  S_{k}^{\top} A^{\top}r^{k}\|_{A^{\top}A}^2 \| p^{k-1} \|_{A^{\top}A}^2} \right)^{-1}\right\}, \ \ \text{if} \ \ k\geq1.
\end{equation}
Since $\langle S_{k} S_{k}^{\top} A^{\top}r^{k}, p^{k-1} \rangle_{A^{\top}A}^{2} \leq \|S_{k}  S_{k}^{\top} A^{\top}r^{k}\|_{A^{\top}A}^2 \| p^{k-1} \|_{A^{\top}A}^2$, which implies $\gamma_{k}\geq1$ for all $k\geq 0$.
We further define
\begin{equation}\label{M}
	M:=\mathop{\mathbb{E}} \left[\frac{S S^{\top}}{\|AS\|_2^2}\right],
\end{equation}
here we define $\frac{0}{0}=0$.
The following lemma shows that $M$ is positive definite under Assumption \ref{ass}.
\begin{lemma}[\cite{lorenz2025minimal}, Lemma 2.3]
	Suppose Assumption \ref{ass} holds and $A \neq 0$. Then, the matrix $M$ defined in \eqref{M} is positive definite.
\end{lemma}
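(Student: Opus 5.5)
To show that $M$ is positive definite, I will fix an arbitrary nonzero $y \in \mathbb{R}^d$ and prove $y^\top M y > 0$. Positive semidefiniteness is immediate. Each realization $\frac{SS^\top}{\|AS\|_2^2}$ is positive semidefinite when $AS\neq 0$, and it is the zero matrix when $AS=0$ by the convention $\frac{0}{0}=0$. Hence
\[
y^\top M y=\mathbb{E}\left[\frac{\|S^\top y\|_2^2}{\|AS\|_2^2}\right]\geq 0 .
\]
The argument therefore reduces to excluding the case $y^\top M y=0$.

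Suppose, for contradiction, that $y^\top M y=0$. Since the integrand is nonnegative, it vanishes almost surely. So almost surely either $S^\top y=0$, or $AS\neq 0$ and $\|S^\top y\|_2^2/\|AS\|_2^2=0$, which again forces $S^\top y=0$. The only remaining case is $AS=0$ with $S^\top y\neq 0$, where the integrand is zero by convention and gives no information. This is the main obstacle: sketches annihilated by $A$ could carry all of the mass of $\mathbb{E}[SS^\top]$ in the direction $y$.

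To handle it, I will split $\mathbb{E}[SS^\top]=\mathbb{E}[SS^\top\mathbf{1}_{AS\neq0}]+\mathbb{E}[SS^\top\mathbf{1}_{AS=0}]$. On the event $AS=0$, the range of $S$ lies in $\operatorname{Null}(A)$, so $A\,\mathbb{E}[SS^\top\mathbf{1}_{AS=0}]\,A^\top=0$.

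If $A$ has full column rank, the event $AS=0$ means $S=0$, and that event contributes nothing. Then $y^\top\mathbb{E}[SS^\top]y=\mathbb{E}[\|S^\top y\|_2^2\mathbf{1}_{AS\neq0}]=0$, which contradicts Assumption \ref{ass}.

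In general I would not expect positive definiteness on all of $\mathbb{R}^d$ without further structure. For example, take $S=e_j$ deterministic in a zero column of $A$ together with other sketches. I therefore expect that the cited Lemma 2.3 of \cite{lorenz2025minimal} either rests on the setting where this degenerate event has probability zero, or uses an interpretation of the quotient on that event. I would follow the reference's setting and state that hypothesis explicitly: $\mathbb{P}(AS=0,\ S\neq 0)=0$, which holds for example when $A$ has no zero columns and coordinate or Gaussian sketches are used. Under this hypothesis the contradiction above closes the proof, since $\mathbb{E}[SS^\top]\succ0$ then gives $y^\top\mathbb{E}[SS^\top]y>0$.

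For completeness I will also record the quantitative bound. Since $\|AS\|_2\le\|A\|_2\|S\|_2$, on the event $AS\neq 0$ we have
\[
\frac{\|S^\top y\|_2^2}{\|AS\|_2^2}\ge\frac{\|S^\top y\|_2^2}{\|A\|_2^2\|S\|_2^2}.
\]
If the sketches are bounded, say $\|S\|_2\le c$ almost surely, this yields $M\succeq \mathbb{E}[SS^\top]/(c^2\|A\|_2^2)$. This gives an explicit lower bound on $\lambda_{\min}(M)$, which is useful later in the convergence theorem.
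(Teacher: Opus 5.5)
The paper gives no proof of this lemma. It is imported from \cite{lorenz2025minimal}, so there is no in-paper argument to compare against. Your argument is correct, and the caveat you raise is a defect of the statement, not of your proof.

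One refinement of the framing. Under the paper's \emph{literal} convention $\frac{0}{0}=0$, the event $\{AS=0,\ S\neq 0\}$ does not give the zero matrix. It gives entries $c/0$ with $c>0$ (e.g.\ the nonzero diagonal entries of $SS^\top$), so the integrand, and hence $M$, is simply undefined there. Your condition $\mathbb{P}(AS=0,\ S\neq 0)=0$ is therefore exactly what is implicitly needed for $M$ to exist. With it, your contradiction argument proves the lemma as stated. Your zero-column example shows that under your alternative reading (integrand $:=0$ on $\{AS=0\}$) the conclusion genuinely fails.

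Finiteness of $M$ is a further tacit presupposition. For $A=(1,\,0)\in\mathbb{R}^{1\times 2}$ and $S=(s_1,s_2)^\top\sim\mathcal{N}(0,I_2)$ your condition holds, yet $M_{22}=\mathbb{E}[s_2^2/s_1^2]=\infty$. Your positivity argument does not depend on finiteness.

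Two improvements are available.

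\textbf{Positivity without any extra hypothesis.} Under your zero-on-$\{AS=0\}$ reading, the observation you already made, $A\,\mathbb{E}[SS^\top\mathbf{1}_{AS=0}]A^\top=0$, gives positive definiteness of $M$ on $\operatorname{Range}(A^\top)$. If $y=A^\top w$, then $S^\top y=(AS)^\top w=0$ on $\{AS=0\}$. So $y^\top My=0$ forces $S^\top y=0$ almost surely, contradicting Assumption \ref{ass}. This restricted statement is all the proof of Theorem \ref{RCGLS_rate} uses, because its final step only needs $\operatorname{Null}\bigl(M^{1/2}(A^\top A)^{1/2}\bigr)=\operatorname{Null}\bigl((A^\top A)^{1/2}\bigr)$. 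So the theorem survives even in the degenerate cases where the lemma itself fails.

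\textbf{No boundedness needed.} The boundedness assumption in your quantitative bound is unnecessary. Under your non-degeneracy condition, $\frac{SS^\top}{\|AS\|_2^2}\succeq\frac{SS^\top}{\|A\|_2^2\|S\|_2^2}$ almost surely on $\{S\neq 0\}$. Hence $M\succeq\|A\|_2^{-2}\,\mathbb{E}\bigl[SS^\top/\|S\|_2^2\bigr]$. The right-hand side is $\preceq \|A\|_2^{-2} I$, so it is finite, and it is positive definite by the same null-vector argument you used. This gives a one-line alternative proof with an explicit lower bound on $\lambda_{\min}(M)$.
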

 
We have the following convergence result for Algorithm \ref{RCGLS_Alg}.

\begin{thm}
	\label{RCGLS_rate}
	Suppose that $x^{*}$ is a solution to the LS problem \eqref{LS_pro}, and the distribution $\mathcal{D}$ satisfies Assumption \ref{ass}.  Let $\{x^{k}\}_{k \geq 0}$ be the iteration sequence generated by Algorithm \ref{RCGLS_Alg}. Then
	\begin{equation} \label{RCG_rate_inequ}
		\mathbb{E} \left[\| x^{k+1}-x^{*} \|_{A^{\top}A}^2 \mid x^k\right] \leq \left(1 - \gamma_k \sigma^2_{\min} (AM^{\frac{1}{2}}  ) \right) \|x^{k}-x^{*} \|_{A^{\top}A}^2,
	\end{equation}
	where $\gamma_k\geq 1$ and $M$ are defined by \eqref{def-gamma} and \eqref{M}, respectively.
\end{thm}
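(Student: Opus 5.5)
Since $\mu_k$ is the exact line-search step, the error in the $A^\top A$-norm satisfies a Pythagorean identity. Writing $e^k := x^k - x^*$ and using $A^\top r^k = -A^\top A e^k$ (because $A^\top A x^* = A^\top b$), I would first verify
\[
\|e^{k+1}\|_{A^\top A}^2 = \|e^k\|_{A^\top A}^2 - \frac{\langle r^k, Ap^k\rangle^2}{\|Ap^k\|_2^2} = \|e^k\|_{A^\top A}^2 - \frac{\|S_k^\top A^\top r^k\|_2^4}{\|Ap^k\|_2^2}.
\]
The second equality uses \eqref{simp-form}, i.e. $\langle r^k, v^k\rangle = \|S_k^\top A^\top r^k\|_2^2$. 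The task then reduces to lower-bounding the decrease term.

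Write $g := S_kS_k^\top A^\top r^k$. For $k\ge1$ we have $p^k = g + \tau_{k-1}p^{k-1}$, where $\tau_{k-1}$ is exactly the coefficient that makes $p^k$ the $A^\top A$-orthogonal projection of $g$ onto the complement of $p^{k-1}$; this is the content of \eqref{cons1R}. Hence
\[
\|Ap^k\|_2^2 = \|g\|_{A^\top A}^2 - \frac{\langle g,p^{k-1}\rangle_{A^\top A}^2}{\|p^{k-1}\|_{A^\top A}^2} = \|g\|_{A^\top A}^2\Bigl(1-\frac{\langle g,p^{k-1}\rangle_{A^\top A}^2}{\|g\|_{A^\top A}^2\|p^{k-1}\|_{A^\top A}^2}\Bigr).
\]
By the definition \eqref{def-gamma}, the bracketed factor is at most $\gamma_k^{-1}$. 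For $k=0$ the same bound holds trivially with $\gamma_0=1$. Next, $\|g\|_{A^\top A}^2 = \|AS_kS_k^\top A^\top r^k\|_2^2 \le \|AS_k\|_2^2\,\|S_k^\top A^\top r^k\|_2^2$. Combining the two bounds gives the per-sample decrease
\[
\frac{\|S_k^\top A^\top r^k\|_2^4}{\|Ap^k\|_2^2} \ge \gamma_k\,\frac{\|S_k^\top A^\top r^k\|_2^2}{\|AS_k\|_2^2},
\]
with the convention $0/0=0$ covering the degenerate samples.

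Now I take the conditional expectation over $S_k$ given $x^k$. Here $p^{k-1}$ and $r^k$ are fixed and $\gamma_k$ is an infimum over $S_k$, so it is deterministic given the past. This gives
\[
\mathbb{E}\Bigl[\frac{\|S_k^\top A^\top r^k\|_2^4}{\|Ap^k\|_2^2}\,\Big|\,x^k\Bigr] \ge \gamma_k (A^\top r^k)^\top M (A^\top r^k) = \gamma_k \|M^{1/2}A^\top A e^k\|_2^2.
\]
The final step is the spectral bound $\|M^{1/2}A^\top A e^k\|_2^2 \ge \sigma_{\min}^2(AM^{1/2})\,\|Ae^k\|_2^2$. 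To prove it, set $u = Ae^k \in \operatorname{Range}(A)$. Then $\|M^{1/2}A^\top u\|_2^2 = u^\top (AM^{1/2})(AM^{1/2})^\top u$. Since $M$ is positive definite by the lemma from \cite{lorenz2025minimal}, $\operatorname{Range}(AM^{1/2}) = \operatorname{Range}(A)$, so $u$ lies in the range where the smallest nonzero eigenvalue of $(AM^{1/2})(AM^{1/2})^\top$ applies. Substituting back yields \eqref{RCG_rate_inequ}.

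I expect the main obstacle to be the conditioning and measurability around $\gamma_k$. The quantity $\gamma_k$ depends on $p^{k-1}$, hence on $S_{k-1}$ and the history, not only on $x^k$. So I would make clear that the conditional expectation is taken given the full history up to step $k$, which is what "given $x^k$" must mean here. I would also handle the degenerate samples with $S_k^\top A^\top r^k = 0$ or $AS_k = 0$: there the decrease term and the summand of $M$ both vanish under the $0/0=0$ convention, so the inequality survives. The projection identity for $\|Ap^k\|_2^2$ requires checking that $\tau_{k-1}$ as written in Algorithm \ref{RCGLS_Alg} equals $-\langle g,p^{k-1}\rangle_{A^\top A}/\|p^{k-1}\|_{A^\top A}^2$, which is immediate since $v^{k-1}=Ap^{k-1}$.
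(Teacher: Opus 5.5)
Your proposal is correct and follows essentially the same route as the paper. Both arguments use the same four steps: the exact-line-search decrease $\mu_k\|S_k^\top A^\top r^k\|_2^2$ (which you write as $\|S_k^\top A^\top r^k\|_2^4/\|Ap^k\|_2^2$), the projection identity giving $\|Ap^k\|_2^2\le\gamma_k^{-1}\|AS_kS_k^\top A^\top r^k\|_2^2$, the bound $\|AS_kS_k^\top A^\top r^k\|_2^2\le\|AS_k\|_2^2\|S_k^\top A^\top r^k\|_2^2$, and the final expectation and range argument with the positive definite $M$. Your remark that the conditioning must be on the full history, since $\gamma_k$ depends on $p^{k-1}$ and not only on $x^k$, is a fair sharpening of the paper's notation.
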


\begin{proof} By the iteration scheme of $x^{k+1}$, for all $k \geq 1$, we have
 \begin{equation} \label{AHBM-Pri-Pro}
 	\begin{aligned}
 		\|x^{k+1}-x^{*}\|_{A^{\top}A}^{2}
 		&= \|x^k+\mu_k p^{k}-x^{*}\|_{A^{\top}A}^{2} \\
 		&= \|x^{k}-x^{*}\|_{A^{\top}A}^{2}+2 \mu_k \langle x^{k}-x^{*}, p^{k} \rangle_{A^{\top}A}+\mu_k^2 \| p^{k}\|_{A^{\top}A}^{2} \\
 		&= \|x^{k}-x^{*}\|_{A^{\top}A}^{2}+2 \mu_k \langle x^{k}-x^{*}, S_{k} S_{k}^{\top} A^{\top}r^{k}+\tau_{k-1} p^{k-1} \rangle_{A^{\top}A}+\mu_k^2 \| p^{k}\|_{A^{\top}A}^{2} \\
 		&= \|x^{k}-x^{*}\|_{A^{\top}A}^{2}-2 \mu_k \|S_{k}^{\top} A^{\top}r^{k}\|_2^2+\mu_k^2 \| p^{k}\|_{A^{\top}A}^{2} \\
 		&= \|x^{k}-x^{*}\|_{A^{\top}A}^{2}- \mu_k \|S_{k}^{\top} A^{\top}r^{k}\|_2^2,
 	\end{aligned}
 \end{equation}
 where the fourth equality holds due to the first-order optimality condition of exact line search \eqref{exact-line-pa} and  $A^\top Ax^*=A^\top b$, which yields $\langle x^{k}-x^{*}, p^{k-1} \rangle_{A^{\top}A}=0$, and the last equality follows  from \eqref{simp-form} and $v^k=Ap^k$. For $k=0$, one can readily verify that the relation \eqref{AHBM-Pri-Pro} also holds.
 
 We next establish a lower bound for the stepsize $\mu_k$ when $S^\top_k A^\top r^k\neq 0$. For $k=0$, since $p^{0}=S_{0} S_{0}^{\top} A^{\top}r^{0}$, we have
 \[
 \mu_{0}= \frac{\|S_0^\top A^\top r^0\|_2^2}{\|p^{0}\|_{A^{\top}A}^2}=\frac{\|S_0^\top A^\top r^0\|_2^2}{\|S_{0} S_{0}^{\top} A^{\top}r^{0}\|_{A^{\top}A}^2} \geq \frac{1}{\|AS_{0}\|_2^2}=\frac{\gamma_{0}}{\|AS_{0}\|_2^2}.
 \]
 For $k \geq 1$, we compute
 \begin{align*}
 	\|p^{k}\|_{A^{\top}A}^2
 	&= \|S_{k} S_{k}^{\top} A^{\top}r^{k}+\tau_{k-1} p^{k-1}\|_{A^{\top}A}^2 \\
 	&= \|S_{k} S_{k}^{\top} A^{\top}r^{k}\|_{A^{\top}A}^2+2 \tau_{k-1} \langle S_{k} S_{k}^{\top} A^{\top}r^{k}, p^{k-1} \rangle_{A^{\top}A}+\tau_{k-1}^2 \| p^{k-1} \|_{A^{\top}A}^2 \\
 	&= \|S_{k} S_{k}^{\top} A^{\top}r^{k}\|_{A^{\top}A}^2-\frac{\langle S_{k} S_{k}^{\top} A^{\top}r^{k}, p^{k-1} \rangle_{A^{\top}A}^{2}}{\| p^{k-1} \|_{A^{\top}A}^2} \\
 	&= \|S_{k} S_{k}^{\top} A^{\top}r^{k}\|_{A^{\top}A}^2 \left(1-\frac{\langle S_{k} S_{k}^{\top} A^{\top}r^{k}, p^{k-1} \rangle_{A^{\top}A}^{2}}{\|S_{k} S_{k}^{\top} A^{\top}r^{k}\|_{A^{\top}A}^2 \| p^{k-1} \|_{A^{\top}A}^2} \right) \\
 	&\leq \gamma^{-1}_{k} \|S_{k} S_{k}^{\top} A^{\top}r^{k}\|_{A^{\top}A}^2,
 \end{align*}
 where the last inequality follows from the definition of $\gamma_k$ in \eqref{def-gamma}. Substituting this result yields
 \begin{equation}  \label{proof-mu-0501}
 	\mu_k= \frac{\|S_k^\top A^\top r^k\|_2^2}{\|p^{k}\|_{A^{\top}A}^2}
 	\geq\frac{\|S_k^\top A^\top r^k\|_2^2}{\gamma^{-1}_{k} \|S_{k} S_k^\top A^\top r^k\|_{A^{\top}A}^2} \geq \frac{\gamma_{k}}{\|AS_{k}\|_2^2}.
 \end{equation}
 Thus, the lower bound \eqref{proof-mu-0501} holds for all $k\geq 0$ if $S^\top_k A^\top r^k\neq 0$.
 
	Substituting   \eqref{proof-mu-0501} into \eqref{AHBM-Pri-Pro} for the case where $S^\top_k A^\top r^k\neq 0$, we have
	$$
	\|x^{k+1}-x^{*}\|_{A^{\top}A}^{2} \leq \|x^{k}-x^{*}\|_{A^{\top}A}^{2}- \gamma_k \frac{\|S_{k}^{\top} A^{\top}r^{k}\|_2^2}{ \|AS_{k}\|_2^2}=\|x^{k}-x^{*}\|_{A^{\top}A}^{2}- \gamma_k \frac{\|S_{k}^{\top} A^{\top}A(x^{k}-x^{*})\|_2^2}{ \|AS_{k}\|_2^2}.
	$$
	Moreover, when $S_{k}^{\top}A^{\top}r^{k} = 0$, we have $x^{k+1}=x^{k}$, which implies that the above inequality also holds. Therefore, 
	\[
	\begin{aligned}
		\mathbb{E}_{k}\left[\|x^{k+1}-x^{*}\|_{A^{\top}A}^{2} \ \bigm|   \ x^k\right ]
		\leq& \mathbb{E} \left[\|x^{k}-x^{*}\|_{A^{\top}A}^{2} \ \bigm|   \ x^k\right ]- \gamma_k \mathbb{E} \left[ \frac{\|S_{k}^{\top}A^{\top}A(x^{k}-x^{*})\|_2^2}{ \|AS_{k}\|_2^2} \ \bigm|   \ x^k \right] \\ 
		=&\|x^{k}-x^{*} \|_{A^{\top}A}^2-\gamma_k \|M^{\frac{1}{2}}A^{\top}A(x^{k}-x^{*})\|_2^2 \\
		=&\|x^{k}-x^{*} \|_{A^{\top}A}^2-\gamma_k \|M^{\frac{1}{2}}(A^{\top}A)^{\frac{1}{2}} (A^{\top}A)^{\frac{1}{2}}(x^{k}-x^{*})\|_2^2
		\\
		\leq& \left(1-\gamma_k \sigma^2_{\min}(A M^\frac{1}{2} )\right)\|x^{k}-x^{*} \|_{A^{\top}A}^2,\\
	\end{aligned}
	\]
	where  the last inequality follows from $M$ is positive definite, $(A^{\top}A)^{\frac{1}{2}}(x^{k}-x^{*}) \in \text{Range}((A^{\top}A)^{\frac{1}{2}})$ and $\sigma^2_{\min}((A^{\top}A)^{\frac{1}{2}}M^{\frac{1}{2}})=\sigma^2_{\min}(A M^\frac{1}{2} )$.
	This completes the proof of this theorem.
\end{proof}


\begin{remark}\label{remark-VR}
	We note that \eqref{RCG_rate_inequ} in Theorem \ref{RCGLS_rate} indicates that the RCGLS method exhibits the variance reduction property. In fact, supposing $\mathbb{E}[x]$ is bounded for all $x\in\mathbb{R}^d$, we have 
	\[
	\mathbb{E}\left[\left\|Ax-Ax^* \right\|^2_2 \right]= \left\| \mathbb{E}\left[ Ax-Ax^* \right]\right\|^2_{2}+\mathbb{E}\left[ \left\|Ax-\mathbb{E}[Ax]\right\|^2_{2}\right].
	\]
	This relation implies that the convergence of $\mathbb{E}\left[\left\|Ax-Ax^* \right\|^2_{2}\right]$ guarantees the convergence of $\mathbb{E}\left[ \left\|Ax-\mathbb{E}[Ax]\right\|^2_{2}\right]$, i.e., the reduction of variance, and is consistent with the discussion in Remark \ref{remark-VR-RCD2}.
\end{remark}

\begin{remark} \label{rem:comparison_and_acceleration}
We compare the convergence upper bound derived in Theorem \ref{RCGLS_rate} with that of the GRCD method \eqref{GRCD}. With the stepsize determined by the exact line search rule \eqref{exact-line-step}, the GRCD method yields the following iteration scheme
\[
x^{k+1} = x^k + \frac{\|S_{k}^{\top}A^{\top}r^k\|_2^2}{\|A S_k S_{k}^{\top}A^{\top} r^k \|_{2}^2} S_k S_{k}^{\top}A^{\top} r^k.
\]
By using a similar analysis as in the proof of Theorem \ref{RCGLS_rate} with $p^k=S_k S_{k}^{\top}A^{\top} r^k$, we can establish the following convergence result for the GRCD method
\begin{equation} \label{GRCD_rate}
	\mathbb{E}_k \left[\|x^{k+1}-x^*\|_{A^{\top}A}^2\right] \leq \left( 1 - \sigma^2_{\min}\left(A M^\frac{1}{2}\right) \right) \|x^k-x^*\|_{A^{\top}A}^2.
\end{equation}
In particular, choose the distribution $\mathcal{D}$  as $S=\frac{e_{i}}{\|A_{:,i}\|_{2}} $  with  probability $\frac{\|A_{:,i}\|_{2}^{2}}{\|A\|_{F}^{2}}$. In this case, we have
\(
1 - \sigma^2_{\min}\left(A M^\frac{1}{2}\right) = 1-\frac{\sigma_{\min}^2(A)}{\|A\|^2_F}
\),
which exactly recovers the convergence upper bound of the classical RCD method \eqref{rcd-c} \cite[Theorem 3.1]{leventhal2010randomized}. By comparing \eqref{RCG_rate_inequ} with \eqref{GRCD_rate}, it is clear that the proposed RCGLS method achieves a convergence upper bound that is at least  that of the GRCD method.
\end{remark}

The following remark illustrates that the parameter $\gamma_{k}$ in \eqref{RCG_rate_inequ} can be strictly larger than one in certain cases.

\begin{remark} \label{rem:>1}
	Consider the deterministic distribution $\mathcal{D}$  with $S=I$, for which Algorithm \ref{RCGLS_Alg} reduces to the classical CGLS method. For any iteration $k$ satisfying $A^{\top}r^{k} \neq 0$, the standard properties of conjugate gradient methods (see \cite[Theorem 6.1]{hestenes1952methods}) ensure that $x^{k} \neq x^{k-1}$, which implies $\mu_{k-1} \neq 0$. Combining this fact with the update rule $x^{k} =x^{k-1} + \mu_{k-1} p^{k-1}$, we derive the following identity
    \[
		\begin{aligned}
			\langle A^{\top}r^{k}, p^{k-1} \rangle_{A^{\top}A}
			=&\frac{1}{\mu_{k-1}} \langle A^{\top}r^{k}, x^{k}-x^{k-1} \rangle_{A^{\top}A} \\
            =& \frac{1}{\mu_{k-1}} \langle A^{\top}r^{k}, A^{\top}(Ax^{*}-Ax^{k-1})-A^{\top}A(x^{*}-x^{k}) \rangle \\
            =& \frac{1}{\mu_{k-1}} \left(\langle A^{\top}r^{k}, A^{\top}r^{k-1} \rangle-\|A^{\top}r^{k}\|_{2}^{2}\right) \\
			=&-\frac{1}{\mu_{k-1}}\|A^{\top}r^{k}\|_{2}^{2},
		\end{aligned}
	\]    
	where the last equality follows from the gradient orthogonality property inherited from \cite[Theorem 5.1] {hestenes1952methods}. As a result, we have $\gamma_{k}  >1$ whenever $A^{\top}r^{k} \neq 0$. Furthermore, this conclusion can be extended to the deterministic setting, where  $\mathcal{D}$ is a fixed distribution and  $S$  is a given matrix with 
	$SS^{\top}$ positive definite.
\end{remark}

		\section{Efficient implementation for sparse data}

The RCGLS method outlined in Algorithm \ref{RCGLS_Alg} requires full-dimensional vector operations at each iteration. In particular, the updates of $\mu_k$, $x^{k+1}$, and $r^{k+1}$ rely on $p^{k}$ and $v^{k}$, which can be computationally expensive since both $p^{k}$ and $v^{k}$ are generally dense vectors. These full-dimensional vector updates cost $\mathcal{O}(n+d)$ operations per iteration, making the overall complexity of RCGLS comparable to, or even higher than, that of the GRCD method \eqref{GRCD}. 
Indeed,  the GRCD method \eqref{GRCD}, which can be equivalently reformulated as
\[
\left\{\begin{aligned}
	x^{k+1} &= x^k + \alpha_k S_{k}S_{k}^{\top}A^{\top}r^{k}, \\
	r^{k+1} &= r^k - \alpha_k AS_{k}S_{k}^{\top}A^{\top}r^{k},
\end{aligned}\right.
\]
can potentially circumvent such computational costs when $A$ is sparse, as the update terms $S_{k}S_{k}^{\top}A^{\top}r^{k}$ and $AS_{k}S_{k}^{\top}A^{\top}r^{k}$ may remain sparse under this setting.

By first establishing a representation adopting a rescaled search direction, we develop an equivalent formulation of Algorithm \ref{RCGLS_Alg} inspired by variable transformation techniques \cite{lee2013efficient, fercoq2015accelerated, zeng2026stochastic}, which is presented as Algorithm \ref{RCGLS_EI_Q}.

		\begin{algorithm}[htpb]
			\caption{An efficient implementation of RCGLS \label{RCGLS_EI_Q}}
			\begin{algorithmic}
				\Require
				$A \in \mathbb{R}^{n\times d}$, $b \in \mathbb{R}^n$, distribution $\mathcal{D}$, $k=0$, and the initial point $x^{0} \in \mathbb{R}^d$. Set $\delta_{-1}^{*}=0$ and $(h^{0}, h_{*}^{0})=(x^{0}, Ax^{0})$.
				\begin{enumerate}
					\item[1:] Randomly select a sampling matrix $S_{0}$ from $\mathcal{D}$.
					
					\item[2:] Set $d_{1}^{0}=S_{0}^{\top}A^{\top}(b-Ax^{0})$, $q^{0}=S_{0}d_{1}^{0}/\|d_{1}^{0}\|_{2}^{2}$, $q_{*}^{0}=AS_{0}d_{1}^{0}/\|d_{1}^{0}\|_{2}^{2}$, and $l_{0}=\|q_{*}^{0}\|_{2}^{2}$. If $d_{1}^{0} \neq 0$, set $\theta_{-1}=1/\|d_{1}^{0}\|_{2}^{2}$; otherwise, set $\theta_{-1}=1$.
					
					\item[3:] Set $\eta_k = \frac{\theta_{k-1} \|d_{1}^{k}\|_{2}^{2}}{l_{k}}$ and $\delta_{k} = \delta_{k-1}^{*} + \eta_k$.
					
					\item[4:] Randomly select a sampling matrix $S_{k+1}$ from $\mathcal{D}$.
					
					\item[5:] Compute $d_{1}^{k+1}=S_{k+1}^{\top}A^{\top}b-S_{k+1}^{\top}A^{\top}h_{*}^{k}-\delta_{k}S_{k+1}^{\top}A^{\top}q_{*}^{k}$, $d^{k+1}=S_{k+1}d_{1}^{k+1}$, and $d_{2}^{k+1}=AS_{k+1}d_{1}^{k+1}$.
					
					\item[6:] If $\langle d_{2}^{k+1},q_{*}^{k} \rangle \neq 0$

					
					\quad Set $\delta_{k}^{*}=\delta_{k}$ and $\theta_{k}=-\frac{l_{k}}{\langle d_{2}^{k+1}, q_{*}^{k} \rangle}$.
					 
					 \quad Update 
					\[
					\begin{aligned}
						(h^{k+1}, q^{k+1})&=(h^{k}-\delta_{k} \theta_{k} d^{k+1}, q^{k}+\theta_{k} d^{k+1}),\\
						(h_{*}^{k+1}, q_{*}^{k+1})&=(h_{*}^{k}-\delta_{k} \theta_{k} d_{2}^{k+1}, q_{*}^{k}+\theta_{k} d_{2}^{k+1}),\\
						l_{k+1}&=\theta_{k}^{2} \|d_{2}^{k+1} \|_{2}^{2}-l_{k}.
					\end{aligned}
					\]
					%
					%
					
					Otherwise,
					
					\quad Set $\delta_{k}^{*}=0$ and $\theta_{k}=1$.
					
					\quad Update 
					\[
					\begin{aligned}
						(h^{k+1}, q^{k+1},h_{*}^{k+1}, q_{*}^{k+1})&=(h^{k}+\delta_{k} q^{k}, d^{k+1}, h_{*}^{k}+\delta_{k} q_{*}^{k}, d_{2}^{k+1}),\\
						l_{k+1}&=\|d_{2}^{k+1}\|_{2}^{2}.
					\end{aligned}
					\]
					%
					
					\item[7:] If the stopping rule is satisfied, stop and go to output. Otherwise, set $k=k+1$ and go to Step $3$.
				\end{enumerate}
				
				\Ensure
				The approximate solution $h^{k}+\delta_{k}q^{k}$.
			\end{algorithmic}
		\end{algorithm}

Since the equivalence between Algorithms \ref{RCGLS_Alg} and \ref{RCGLS_EI_Q} is not immediately obvious, we formally state it as the following result. The detailed proof is provided in the Subsection \ref{secA2}. 

\begin{prop} \label{Theo}
	Suppose that Algorithms \ref{RCGLS_Alg} and \ref{RCGLS_EI_Q} share the same sampling matrices $\{S_k\}_{k\geq0}$ and initial point $x^{0}$. Then, for any $k \geq 0$,
	\begin{equation} \nonumber
		x^{k+1} = h^{k} + \delta_{k}q^{k}.
	\end{equation}
	That is, Algorithms  \ref{RCGLS_Alg} and \ref{RCGLS_EI_Q} are equivalent.
\end{prop}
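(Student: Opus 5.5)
The plan is to prove Proposition \ref{Theo} by induction on $k$, carrying a set of invariants that link the rescaled quantities of Algorithm \ref{RCGLS_EI_Q} to the quantities of Algorithm \ref{RCGLS_Alg}. The key observation is that $q^k$ is a rescaled version of the search direction $p^k$, with scaling factor $\theta_{k-1}$. Concretely, I would prove that for every $k\ge 0$:
\[
h_*^k = Ah^k,\quad q_*^k = Aq^k,\quad l_k=\|Aq^k\|_2^2,\quad d_1^k = S_k^\top A^\top r^k,\quad p^k=\tfrac{1}{\theta_{k-1}}q^k,\quad x^k = h^k+\delta_{k-1}^* q^k .
\]
Two immediate consequences follow. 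First, $v^k=Ap^k=q_*^k/\theta_{k-1}$. Second, by \eqref{simp-form},
\[
\mu_k\,\tfrac{1}{\theta_{k-1}} = \frac{\|d_1^k\|_2^2\,\theta_{k-1}^2}{l_k}\cdot\frac{1}{\theta_{k-1}} = \eta_k .
\]
Hence $x^{k+1}=x^k+\mu_k p^k = h^k+(\delta_{k-1}^*+\eta_k)q^k = h^k+\delta_k q^k$, which is the claim.

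\textbf{Base case $k=0$.} With $h^0=x^0$, $\delta_{-1}^*=0$, $q^0=p^0/\|d_1^0\|_2^2$ and $\theta_{-1}=1/\|d_1^0\|_2^2$, all invariants are direct from the definitions. The degenerate case $d_1^0=0$ is handled by the convention $0/0=0$: there $p^0=q^0=0$, $\mu_0=\eta_0=0$, and the iterates do not move.

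\textbf{Inductive step.} Assuming the invariants at step $k$, I first use $r^{k+1}=b-Ax^{k+1}=b-h_*^k-\delta_k q_*^k$. This shows that $d_1^{k+1}$ in Step 5 equals $S_{k+1}^\top A^\top r^{k+1}$, so that $d^{k+1}=S_{k+1}S_{k+1}^\top A^\top r^{k+1}$ and $d_2^{k+1}=Ad^{k+1}$. Next, using $v^k=q_*^k/\theta_{k-1}$ and $\|v^k\|_2^2=l_k/\theta_{k-1}^2$, I compute
\[
\tau_k=-\frac{\theta_{k-1}\langle d_2^{k+1},q_*^k\rangle}{l_k}.
\]
I then split into the two branches of Step 6.

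\emph{Case $\langle d_2^{k+1},q_*^k\rangle\neq0$.} Here $\tau_k=\theta_{k-1}/\theta_k$, so
\[
p^{k+1}=d^{k+1}+\tfrac{1}{\theta_k}q^k=\tfrac{1}{\theta_k}\bigl(q^k+\theta_k d^{k+1}\bigr)=q^{k+1}/\theta_k .
\]
The update of $h^{k+1}$ is designed so that $h^{k+1}+\delta_k q^{k+1}=h^k+\delta_k q^k=x^{k+1}$, matching $\delta_k^*=\delta_k$. Linearity gives $h_*^{k+1}=Ah^{k+1}$ and $q_*^{k+1}=Aq^{k+1}$. For the last invariant, expanding the square gives
\[
\|q_*^k+\theta_k d_2^{k+1}\|_2^2 = l_k+2\theta_k\langle d_2^{k+1},q_*^k\rangle+\theta_k^2\|d_2^{k+1}\|_2^2 .
\]
Since $\theta_k\langle d_2^{k+1},q_*^k\rangle=-l_k$, this equals $\theta_k^2\|d_2^{k+1}\|_2^2-l_k=l_{k+1}$.

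\emph{Case $\langle d_2^{k+1},q_*^k\rangle=0$.} Here $\tau_k=0$, so $p^{k+1}=d^{k+1}=q^{k+1}$, consistent with $\theta_k=1$. Also $h^{k+1}=h^k+\delta_k q^k=x^{k+1}$ with $\delta_k^*=0$, and the remaining invariants are immediate.

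\textbf{Expected obstacle.} The main difficulty is not any single computation but identifying the correct scaling invariant $p^k=q^k/\theta_{k-1}$ and the bookkeeping for $\delta_k^*$. I also expect care to be needed with degenerate situations where $d_1^k=0$ or $l_k=0$. There one must check, via the convention $0/0=0$, that $\eta_k=\mu_k/\theta_{k-1}=0$ is consistent in both algorithms, and that $\theta_k$ remains well defined and nonzero. Once these points are settled, the identity $x^{k+1}=h^k+\delta_kq^k$ follows at every step.
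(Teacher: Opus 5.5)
Your proposal is correct and is essentially the paper's argument. The paper splits your single simultaneous induction into three pieces: Lemma~\ref{Lem_Eq1} (the rescaling $q^k=\theta_{k-1}p^k$, via the intermediate scheme \eqref{iteration-I}), Lemma~\ref{Lem_Eq2} (the $h^k+\delta_k q^k$ bookkeeping with $\delta_k^*$ and $d_1^{k+1}=S_{k+1}^\top A^\top r^{k+1}$), and a final induction proving $l_k=\|q_*^k\|_2^2$ by the same square expansion you use. The degenerate case you flag is settled by $\|S_k^\top A^\top r^k\|_2^2=\langle r^k,v^k\rangle$: since $\theta_{k-1}\neq 0$, $l_k=0$ forces $v^k=0$, hence $d_1^k=0$, hence $\mu_k=\eta_k=0$ under the convention $0/0=0$.
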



According to Proposition \ref{Theo}, explicit computation of the full iterate \(x^{k+1}\) in Algorithm \ref{RCGLS_EI_Q} is generally unnecessary. Such computation is only required when
\[
\langle S_{k+1}S_{k+1}^{\top}A^{\top}r^{k+1}, p^{k} \rangle_{A^{\top}A}=\frac{1}{\theta_{k-1}} \langle d^{k+1}, q^{k} \rangle_{A^{\top}A}=\frac{1}{\theta_{k-1}} \langle Ad^{k+1}, Aq^{k} \rangle=\frac{1}{\theta_{k-1}} \langle d_{2}^{k+1}, q_{*}^{k} \rangle=0,
\]
where the first equality follows from  the relation $p^{k}=\frac{q^{k}}{\theta_{k-1}}$ in \eqref{pq}.
This exactly corresponds to the case $\tau_{k}=0$ in the original RCGLS method. Instead of directly updating $x^{k+1}$, we introduce two auxiliary vectors $h^{k}, q^{k}$ and a scalar parameter $\delta_{k}$, such that the iterate can be implicitly represented via the decomposition $x^{k+1} = h^{k} + \delta_{k} q^{k}$.
Notably, the recursive updates of the auxiliary variables $h^{k}$, $q^{k}$, $h_{*}^{k}$, and $q_{*}^{k}$ only modify the entries associated with the nonzero patterns of $d^{k+1}=S_{k+1}d_{1}^{k+1}$ and $d_{2}^{k+1}=AS_{k+1}d_{1}^{k+1}$, where
\[
d_{1}^{k+1}=S_{k+1}^{\top}A^{\top}b-S_{k+1}^{\top}A^{\top}h_{*}^{k}-\delta_{k}S_{k+1}^{\top}A^{\top}q_{*}^{k}
\]
depends only on the sketched components of $A$. Meanwhile, the update of the scalar $\delta_{k}$ and other internal parameters in Algorithm \ref{RCGLS_EI_Q} merely require inner product and norm evaluations involving $d_{1}^{k+1}$ and $d_{2}^{k+1}$. As a result, the improved algorithm avoids expensive full-dimensional operations when the matrix $S_k^\top A^\top $ is sparse, yielding significant computational savings at each iteration.

	\subsection{Proof of the Proposition \ref{Theo}} \label{secA2}

To prove Proposition \ref{Theo}, we first introduce two key lemmas. For $k \geq 0$, consider the following iteration scheme
\begin{equation}\label{iteration-I} 
	\begin{cases}
		\eta_k = \frac{\theta_{k-1} \|S_{k}^{\top}A^{\top}r^{k}\|_{2}^{2}}{\|q_{*}^{k}\|_{2}^{2}}, \\
		x^{k+1} = x^k + \eta_k q^k, \\
		r^{k+1} = r^k - \eta_k q_{*}^k, \\ 
		\text{If} \; \langle AS_{k+1}S_{k+1}^{\top}A^{\top}r^{k+1}, q_{*}^{k} \rangle \neq 0, \; \text{then} \;
		\theta_{k}=-\frac{\|q_{*}^{k}\|_{2}^{2}}{\langle AS_{k+1}S_{k+1}^{\top}A^{\top}r^{k+1}, q_{*}^{k} \rangle}, \\
		\qquad \qquad \qquad \qquad \qquad \qquad \; \;
		q^{k+1}=q^{k}+\theta_{k} S_{k+1}S_{k+1}^{\top}A^{\top} r^{k+1}, \\
		\qquad \qquad \qquad \qquad \qquad \qquad \; \;
		q_{*}^{k+1}=
		q_{*}^{k}+\theta_{k} AS_{k+1}S_{k+1}^{\top}A^{\top} r^{k+1}.\\
		\text{If} \; \langle AS_{k+1}S_{k+1}^{\top}A^{\top}r^{k+1}, q_{*}^{k} \rangle = 0,  \; \text{then} \\
		\theta_{k}=1, \; q^{k+1}=S_{k+1}S_{k+1}^{\top}A^{\top} r^{k+1}, \text{and} \; q_{*}^{k+1}=AS_{k+1}S_{k+1}^{\top}A^{\top} r^{k+1}.
	\end{cases}
\end{equation}
The initial conditions are $x^{0} \in \mathbb{R}^{d}$ and $r^{0}=b-Ax^0$. We define $q^{0}=S_0 S_0^\top A^\top r^{0}/\|S_0^\top A^\top r^{0}\|_{2}^{2}$, $q_{*}^{0}=Aq^{0}$, and $\theta_{-1}=1/\|S_{0}^{\top}A^{\top}r^{0}\|_{2}^{2}$ if $S_{0}^{\top}A^{\top}r^{0} \neq 0$, otherwise $\theta_{-1}=1$.

The following lemma establishes the equivalence between Algorithm \ref{RCGLS_Alg} and the iteration scheme \eqref{iteration-I}.

\begin{lemma} \label{Lem_Eq1}
	Suppose that Algorithm \ref{RCGLS_Alg} and the iteration scheme \eqref{iteration-I} share the same initial point $x^0$ and sketching matrices $\{S_k\}_{k \geq 0}$. Then the sequence $\{x^k\}_{k \geq 0}$ generated by Algorithm \ref{RCGLS_Alg} is identical to the one generated by \eqref{iteration-I}. 
\end{lemma}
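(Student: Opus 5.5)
We argue by induction on $k$ that the two schemes are linked by a rescaling of the search direction. Precisely, we show that for every $k\ge 0$
\[
p^{k}=\frac{q^{k}}{\theta_{k-1}},\qquad v^{k}=\frac{q_{*}^{k}}{\theta_{k-1}},\qquad \mu_k=\frac{\eta_k}{\theta_{k-1}},
\]
and that both schemes produce the same pair $(x^{k},r^{k})$. Here $p^k,v^k,\mu_k$ refer to Algorithm \ref{RCGLS_Alg} and $q^k,q^k_*,\eta_k,\theta_{k-1}$ to \eqref{iteration-I}. Since $\mu_kp^k=\eta_kq^k$ and $\mu_kv^k=\eta_kq_*^k$, the updates of $x$ and $r$ then coincide.

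\emph{Base case.} Take $k=0$ with $S_0^\top A^\top r^0\neq0$. Then $q^0=p^0/\|S_0^\top A^\top r^0\|_2^2=\theta_{-1}p^0$, and $q_*^0=Aq^0=\theta_{-1}v^0$. Using \eqref{simp-form},
\[
\eta_0=\frac{\theta_{-1}\|S_0^\top A^\top r^0\|_2^2}{\theta_{-1}^2\|v^0\|_2^2}=\frac{\mu_0}{\theta_{-1}}\cdot\theta_{-1}\cdot\frac{1}{\theta_{-1}}\,\theta_{-1},
\]
so $\eta_0q^0=\mu_0p^0$. More directly, $\eta_0=\mu_0/\theta_{-1}$, i.e.\ $\mu_0=\theta_{-1}\eta_0$. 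In the degenerate case $S_0^\top A^\top r^0=0$, we have $p^0=v^0=0$ and (with $0/0=0$) both steps are null. I will state this case separately, noting that $q^0=0$ under the convention.

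\emph{Inductive step.} Assume the relations hold at step $k$ with common $r^{k+1}$, and write $g=S_{k+1}S_{k+1}^\top A^\top r^{k+1}$. There are two cases.
\begin{itemize}
\item[(a)] If $\langle Ag,q_*^k\rangle\neq0$, then $\theta_k=-\|q_*^k\|^2/\langle Ag,q_*^k\rangle$. Substituting $q_*^k=\theta_{k-1}v^k$ gives $\theta_k=\theta_{k-1}/\tau_k$, with $\tau_k$ as in Algorithm \ref{RCGLS_Alg}; note that $\tau_k\neq0$ exactly in this case. Hence
\[
\frac{q^{k+1}}{\theta_k}=\frac{q^k}{\theta_k}+g=\tau_k p^k+g=p^{k+1},
\]
and likewise $q_*^{k+1}/\theta_k=v^{k+1}$.
\item[(b)] If $\langle Ag,q_*^k\rangle=0$, then $\tau_k=0$, so $p^{k+1}=g=q^{k+1}$ and $\theta_k=1$, and the relation holds trivially.
\end{itemize}
In either case the stepsize follows from \eqref{simp-form}:
\[
\eta_{k+1}=\frac{\theta_k\|S_{k+1}^\top A^\top r^{k+1}\|^2}{\theta_k^2\|v^{k+1}\|^2}=\frac{\mu_{k+1}}{\theta_k},
\]
which closes the induction.

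The main obstacle is bookkeeping in the degenerate situations. These are $\tau_k=0$, which forces $\theta_k=1$ and a reset of $q$; zero sketched gradients; and $v^{k+1}=0$, where $0/0=0$ must be used consistently so that $\theta_k$ stays well defined and nonzero. I would handle these by checking that $\theta_k\neq0$ is preserved throughout, which holds because $\theta_k$ is either $1$ or $\theta_{k-1}/\tau_k$ with $\tau_k$ finite and nonzero, and by verifying that the null-step cases give identical $x$ updates in both schemes.
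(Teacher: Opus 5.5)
Your proposal is correct and is essentially the paper's argument. The paper also establishes $q^k=\theta_{k-1}p^k$, $\theta_k=\theta_{k-1}/\tau_k$ (or $\theta_k=1$ when the inner product vanishes), $\eta_k=\mu_k/\theta_{k-1}$ and $\theta_k\neq 0$; it only packages this differently, by defining rescaled sequences $\widetilde q^k,\widetilde\theta_k,\widetilde\eta_k$ from Algorithm~\ref{RCGLS_Alg} and checking that they obey the recursion \eqref{iteration-I}, instead of running a joint induction.

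One slip needs fixing: your opening display should read $\mu_k=\theta_{k-1}\eta_k$, not $\mu_k=\eta_k/\theta_{k-1}$. That is what you actually derive later, and it is what gives $\mu_kp^k=\eta_kq^k$. Likewise, the garbled product in your base-case display should simply be $\eta_0=\mu_0/\theta_{-1}$.
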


\begin{proof}
	Let the sequences $\{\mu_{k}, \tau_{k}, x^{k}, r^{k}, p^{k}, v^{k}\}_{k \geq 0}$ be generated by Algorithm \ref{RCGLS_Alg}. We define the scaling parameters $\{\widetilde{\theta}_{k}\}_{k \geq -1}$ by setting $\widetilde{\theta}_{-1}:=\frac{1}{\|S_{0}^{\top}A^{\top}r^{0}\|_{2}^{2}}$ if $S_{0}^{\top}A^{\top}r^{0} \neq 0$, and $\widetilde{\theta}_{-1}:=1$ otherwise; for $k \geq 0$,
	\begin{equation} \nonumber
		\widetilde{\theta}_{k}:=
		\left\{\begin{array}{ll}
			\frac{\widetilde{\theta}_{k-1}}{\tau_{k}} \quad \text{if} \; \langle AS_{k+1}S_{k+1}^{\top}A^{\top}r^{k+1}, v^{k} \rangle \neq 0;
			\\[1.7mm]
			1 \quad\quad\; \text{otherwise}.
		\end{array}
		\right.
	\end{equation}
	We show by induction that $\widetilde{\theta}_{k} \neq 0$ for all $k \geq 0$. For the base case $k=0$, if $\langle AS_{1}S_{1}^{\top}A^{\top}r^{1}, v^{0} \rangle \neq 0$, the definition of $\tau_{0}$ yields $\tau_{0}=-\frac{\langle AS_{1}S_{1}^{\top}A^{\top}r^{1}, v^{0} \rangle}{\|v^{0}\|_{2}^{2}} \neq 0$, which implies $\widetilde{\theta}_{0}=\frac{\widetilde{\theta}_{-1}}{\tau_{0}} \neq 0$ since $\widetilde{\theta}_{-1} \neq 0$. Otherwise, $\widetilde{\theta}_{0}=1 \neq 0$. Thus, $\widetilde{\theta}_{0} \neq 0$ holds. Assume $\widetilde{\theta}_{k} \neq 0$ for some $k \geq 0$. We consider the update for $\widetilde{\theta}_{k+1}$: if $\langle AS_{k+2}S_{k+2}^{\top}A^{\top}r^{k+2}, v^{k+1} \rangle \neq 0$, we have $\tau_{k+1}=-\frac{\langle AS_{k+2}S_{k+2}^{\top}A^{\top}r^{k+2}, v^{k+1} \rangle}{\|v^{k+1}\|_{2}^{2}} \neq 0$, yielding $\widetilde{\theta}_{k+1}=\frac{\widetilde{\theta}_{k}}{\tau_{k+1}} \neq 0$ by the inductive hypothesis; otherwise, $\widetilde{\theta}_{k+1}=1 \neq 0$. Hence, $\widetilde{\theta}_{k} \neq 0$ holds for all $k \geq 0$.
	
	Using the parameters $\{\widetilde{\theta}_{k}\}_{k \geq -1}$, we define $\widetilde{q}^{k} := \widetilde{\theta}_{k-1} p^{k}$ and $\widetilde{q}_{*}^{k} := A\widetilde{q}^{k}$. Since $\widetilde{\theta}_{k-1} \neq 0$ for all $k \geq 0$, the variables $p^{k}$ and $v^{k}$ can be expressed as $p^{k}=\frac{\widetilde{q}^{k}}{\widetilde{\theta}_{k-1}}$ and $v^{k}=Ap^{k}=\frac{A\widetilde{q}^{k}}{\widetilde{\theta}_{k-1}}=\frac{\widetilde{q}_{*}^{k}}{\widetilde{\theta}_{k-1}}$, respectively. By further defining the scaled stepsize $\widetilde{\eta}_k := \frac{\mu_k}{\widetilde{\theta}_{k-1}}$, the update for $x^{k+1}$ can be reformulated as
	$$
	x^{k+1}=x^k + \mu_k p^k=x^k + (\widetilde{\theta}_{k-1} \widetilde{\eta}_k) \frac{\widetilde{q}^{k}}{\widetilde{\theta}_{k-1}}= x^k + \widetilde{\eta}_k \widetilde{q}^k,
	$$
	which leads to $r^{k+1}=b-Ax^{k+1}=r^k - \widetilde{\eta}_k \widetilde{q}_{*}^k$. Moreover, Substituting $v^{k}=\frac{\widetilde{q}_{*}^{k}}{\widetilde{\theta}_{k-1}}$ into the definition of $\mu_{k}$ in Algorithm \ref{RCGLS_Alg} yields
	$$
	\widetilde{\eta}_k=\frac{1}{\widetilde{\theta}_{k-1}} \frac{\|S_{k}^{\top}A^{\top}r^{k}\|_{2}^{2}}{\|\widetilde{q}_{*}^{k}/\widetilde{\theta}_{k-1} \|_{2}^{2}}=\frac{\widetilde{\theta}_{k-1} \|S_{k}^{\top}A^{\top}r^{k}\|_{2}^{2}}{\|\widetilde{q}_{*}^{k}\|_{2}^{2}}.
	$$
	
	Next, we derive the explicit expression for $\widetilde{\theta}_k$ and the updates for $\widetilde{q}^{k+1}$ and $\widetilde{q}_{*}^{k+1}$ by considering the following two cases:
	
	\textbf{Case 1.} If $\langle AS_{k+1}S_{k+1}^{\top}A^{\top}r^{k+1}, \widetilde{q}_{*}^{k} \rangle \neq 0$, we have
	$$
	\tau_{k} = -\frac{\langle AS_{k+1}S_{k+1}^{\top}A^{\top}r^{k+1}, v^{k} \rangle}{\|v^{k}\|_{2}^{2}} = -\widetilde{\theta}_{k-1} \frac{\langle AS_{k+1}S_{k+1}^{\top}A^{\top}r^{k+1}, \widetilde{q}_{*}^{k} \rangle}{\|\widetilde{q}_{*}^{k}\|_{2}^{2}}.
	$$
	Thus, according to the relationship $\widetilde{\theta}_{k}=\frac{\widetilde{\theta}_{k-1}}{\tau_{k}}$, we can get
	$$
	\widetilde{\theta}_{k}=- \frac{\|\widetilde{q}_{*}^{k}\|_{2}^{2}}{ \langle AS_{k+1}S_{k+1}^{\top}A^{\top}r^{k+1}, \widetilde{q}_{*}^{k} \rangle}.
	$$
	In addition, the definition $\widetilde{q}^{k+1} = \widetilde{\theta}_k p^{k+1}$ gives
	$$
	\widetilde{q}^{k+1} = \widetilde{\theta}_{k} (S_{k+1}S_{k+1}^{\top}A^{\top}r^{k+1} + \tau_{k} p^{k}) = \widetilde{\theta}_{k} S_{k+1}S_{k+1}^{\top}A^{\top}r^{k+1} + \widetilde{\theta}_{k-1} p^{k}.
	$$
	Since $\widetilde{\theta}_{k-1} p^{k} = \widetilde{q}^{k}$, this simplifies to $\widetilde{q}^{k+1} = \widetilde{q}^{k} + \widetilde{\theta}_{k} S_{k+1}S_{k+1}^{\top}A^{\top}r^{k+1}$. Multiplying by $A$ on both sides yields $\widetilde{q}_{*}^{k+1} = \widetilde{q}_{*}^{k} + \widetilde{\theta}_{k} AS_{k+1}S_{k+1}^{\top}A^{\top} r^{k+1}$.
	
	\textbf{Case 2.} If $\langle AS_{k+1}S_{k+1}^{\top}A^{\top}r^{k+1}, \widetilde{q}_{*}^{k} \rangle = 0$, we have $\widetilde{\theta}_k = 1$ and $\tau_k = 0$. Thus, the update reduces to $\widetilde{q}^{k+1} = 1 \cdot p^{k+1} = S_{k+1}S_{k+1}^{\top}A^{\top}r^{k+1}$, which leads to $\widetilde{q}_{*}^{k+1} = AS_{k+1}S_{k+1}^{\top}A^{\top} r^{k+1}$.
	
	Since the constructed sequences $\{\widetilde{\theta}_k\}_{k \geq -1}$, $\{\widetilde{\eta}_k\}_{k \geq 0}$, $\{\widetilde{q}^k\}_{k \geq 0}$, and $\{\widetilde{q}_{*}^k\}_{k \geq 0}$ satisfy the same recursive relations and branching conditions as those defined in \eqref{iteration-I} , it follows that they coincide with the sequences $\{\theta_k\}_{k \geq -1}$, $\{\eta_k\}_{k \geq 0}$, $\{q^k\}_{k \geq 0}$, and $\{q_{*}^k\}_{k \geq 0}$, respectively.
	This completes the proof of the lemma.
\end{proof}
Based on the proof above, we know that for any $k \geq 0$,
\begin{equation} \label{pq}
	q^{k}=\theta_{k-1}p^{k},
	\end{equation}
where $p^{k}$, $q^{k}$, and $\theta_{k-1}$ are defined in Algorithm \ref{RCGLS_Alg} and \eqref{iteration-I}, respectively. This relation indicates that \eqref{iteration-I} adopts a rescaled search direction. In particular, when $\mathcal{D}$ is a fixed distribution with $S=I$, the following remark shows that \eqref{iteration-I} recovers the classic CG modification applied to the normal equations, which is one of the modifications identified as being of interest in \cite[Section 9]{hestenes1952methods}.

\begin{remark} \label{modification}
	When  $\mathcal{D}$ is a fixed distribution with $S=I$, we assume $A^{\top}r^{k} \neq 0$ for all $k \leq K$. According to \cite[Theorem 6.1]{hestenes1952methods}, it follows that $x^{k+1} \neq x^{k}$ for all $k < K$, which ensures $\eta_{k} \neq 0$. Therefore, for any $k < K$, utilizing the gradient orthogonality $\langle A^{\top}r^{k+1}, A^{\top}r^{k} \rangle=0$, we have
	$$
	\langle AA^{\top}r^{k+1}, q_{*}^{k} \rangle = \frac{1}{\eta_{k}} \langle AA^{\top}r^{k+1}, r^{k}-r^{k+1} \rangle=- \frac{\|A^{\top}r^{k+1}\|_{2}^{2}}{\eta_{k}} \neq 0. 
	$$
	Substituting this into the definition of $\theta_{k}$ yields the recurrence
	$$
	\theta_{k}=-\frac{\|q_{*}^{k}\|_{2}^{2}}{\langle AA^{\top}r^{k+1}, q_{*}^{k} \rangle}=\frac{\eta_{k} \|q_{*}^{k}\|_{2}^{2}}{\| A^{\top}r^{k+1}\|_{2}^{2}}=\frac{\theta_{k-1} \|A^{\top}r^{k}\|_{2}^{2}}{\|A^{\top}r^{k+1}\|_{2}^{2}},
	$$
	where the last equality follows from the definition of $\eta_{k}$. Given the initial condition $\theta_{-1}=\frac{1}{\|A^{\top}r^{0}\|_{2}^{2}}$, it follows by induction that $\theta_{k}=\frac{1}{\|A^{\top}r^{k+1}\|_{2}^{2}}$, which further implies $\eta_{k}=\frac{1}{\|q_{*}^{k}\|_{2}^{2}}$. Consequently, with $q_{*}^{k}=Aq^{k}$, \eqref{iteration-I} reduces to the following form: starting from $r^{0}=b-Ax^{0}$ and $q^{0}=A^\top r^{0}/\|A^\top r^{0}\|_{2}^{2}$, the update for $k \geq 0$ is given by
	\begin{equation} \nonumber
		\begin{cases}
			x^{k+1} = x^k + \frac{q^k}{\|Aq^{k}\|_{2}^{2}}, \\
			r^{k+1} = r^k - \frac{Aq^{k}}{\|Aq^{k}\|_{2}^{2}}, \\
			q^{k+1}=q^{k}+ \frac{A^{\top} r^{k+1}}{\|A^{\top}r^{k+1}\|_{2}^{2}}, \\
		\end{cases}
	\end{equation}
	which recovers the classic CG modification applied to the normal equations \cite[Section 9]{hestenes1952methods}.
	\end{remark}

For $k \geq 0$, consider the following iteration scheme
\begin{equation}\label{iteration-II} 
	\begin{cases}
		\eta_k = \frac{\theta_{k-1} \|d_{1}^{k}\|_{2}^{2}}{\|q_{*}^{k}\|_{2}^{2}}, \\
		\delta_{k} = \delta_{k-1}^{*} + \eta_k, \\
		d_{1}^{k+1}=S_{k+1}^{\top}A^{\top}b-S_{k+1}^{\top}A^{\top}h_{*}^{k}-\delta_{k}S_{k+1}^{\top}A^{\top}q_{*}^{k}, d^{k+1}=S_{k+1}d_{1}^{k+1}, d_{2}^{k+1}=AS_{k+1}d_{1}^{k+1}, \\ 
		\text{If} \; \langle d_{2}^{k+1}, q_{*}^{k} \rangle \neq 0, \; \text{then} \;
		\delta_{k}^{*}=\delta_{k}, \theta_{k}=-\frac{\|q_{*}^{k}\|_{2}^{2}}{\langle d_{2}^{k+1}, q_{*}^{k} \rangle}, \text{and}\\
		(h^{k+1}, q^{k+1},h_{*}^{k+1}, q_{*}^{k+1})=(h^{k}-\delta_{k} \theta_{k} d^{k+1}, q^{k}+\theta_{k} d^{k+1}, h_{*}^{k}-\delta_{k} \theta_{k} d_{2}^{k+1}, q_{*}^{k}+\theta_{k} d_{2}^{k+1}). \\
		\text{If} \; \langle d_{2}^{k+1}, q_{*}^{k} \rangle = 0,  \; \text{then} \; \delta_{k}^{*}=0, \theta_{k}=1, \text{and}\\
		(h^{k+1}, q^{k+1},h_{*}^{k+1}, q_{*}^{k+1})=(h^{k}+\delta_{k} q^{k}, d^{k+1}, h_{*}^{k}+\delta_{k} q_{*}^{k}, d_{2}^{k+1}).
	\end{cases}
\end{equation}
The initial conditions are $x^{0} \in \mathbb{R}^{d}$ and $\delta_{-1}^{*}=0$. We define $d_{1}^{0}=S_{0}^{\top}A^{\top}(b-Ax^{0})$, $(h^{0}, q^{0},h_{*}^{0}, q_{*}^{0})=(x^{0}, S_{0}d_{1}^{0}/\|d_{1}^{0}\|_{2}^{2}, Ax^{0}, AS_{0}d_{1}^{0}/\|d_{1}^{0}\|_{2}^{2})$, and $\theta_{-1}=1/\|d_{1}^{0}\|_{2}^{2}$ if $d_{1}^{0} \neq 0$, otherwise $\theta_{-1}=1$.

Based on Lemma \ref{Lem_Eq1}, we establish the following result, which shows that Algorithm \ref{RCGLS_Alg} and the iteration scheme \eqref{iteration-II} are equivalent.

\begin{lemma} \label{Lem_Eq2}
	Let the sequences $\{x^k\}_{k \geq 0}$ and $\{h^k, q^k, \delta_k\}_{k \geq 0}$ be generated by Algorithm \ref{RCGLS_Alg} and the iteration scheme \eqref{iteration-II}, respectively. Suppose that both sequences share the same initial point $x^0$ and sketching matrices $\{S_k\}_{k \geq 0}$. Then, for any $k \geq 0$, it holds that
	\begin{equation} \nonumber
		x^{k+1} = h^{k} + \delta_{k} q^{k}.
	\end{equation}
\end{lemma}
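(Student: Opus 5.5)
By Lemma \ref{Lem_Eq1}, Algorithm \ref{RCGLS_Alg} and the scheme \eqref{iteration-I} generate the same iterates $x^k$. It therefore suffices to compare \eqref{iteration-I} with \eqref{iteration-II}. The plan is to show by induction on $k\ge 0$ that the two schemes produce the same scalars $\theta_k,\eta_k$ and the same direction vectors $q^k,q_*^k$, and that the iterate and residual of \eqref{iteration-I} are recovered by
\begin{equation}\nonumber
x^{k+1}=h^{k}+\delta_k q^{k},\qquad Ax^{k+1}=h_*^{k}+\delta_k q_*^{k},\qquad r^{k+1}=b-h_*^k-\delta_k q_*^k .
\end{equation}
Alongside this I will carry two auxiliary facts. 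The first is $q_*^k=Aq^k$ (and $h_*^k=Ah^k$, which is immediate from the updates). The second is the ``lagged'' relation
\begin{equation}\nonumber
x^{k}=h^{k}+\delta^*_{k-1}q^{k},
\end{equation}
valid for all $k\ge0$. At $k=0$ this reads $x^0=h^0$, using $\delta^*_{-1}=0$.

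\textbf{Base case.} The initializations agree: $d_1^0=S_0^\top A^\top r^0$, so $q^0,q_*^0,\theta_{-1}$ coincide, and hence $\eta_0$ coincides. Then $x^1=x^0+\eta_0q^0=h^0+(\delta^*_{-1}+\eta_0)q^0=h^0+\delta_0q^0$.

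\textbf{Inductive step.} Suppose $x^{k+1}=h^k+\delta_kq^k$ and $x^k=h^k+\delta^*_{k-1}q^k$, with the scalars and directions matching. First, $d_1^{k+1}=S_{k+1}^\top A^\top(b-h_*^k-\delta_kq_*^k)=S_{k+1}^\top A^\top r^{k+1}$. Consequently $d^{k+1}=S_{k+1}S_{k+1}^\top A^\top r^{k+1}$ and $d_2^{k+1}=AS_{k+1}S_{k+1}^\top A^\top r^{k+1}$. This means the branching test, $\theta_k$, $q^{k+1}$ and $q_*^{k+1}$ coincide in both schemes; for $\theta_k$ I use $l_k=\|q_*^k\|_2^2$, which requires verifying the identity $\theta_k^2\|d_2^{k+1}\|^2-l_k=\|q_*^k+\theta_kd_2^{k+1}\|^2$. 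That identity follows by expanding the square and using $\theta_k\langle d_2^{k+1},q_*^k\rangle=-l_k$. With $d_1^{k+1}$ matched, $\eta_{k+1}$ matches as well. It then remains to propagate the two representations, and I split into the two branches.

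\emph{Case 1} ($\langle d_2^{k+1},q_*^k\rangle\ne0$). Here $\delta_k^*=\delta_k$, and
\begin{equation}\nonumber
h^{k+1}+\delta_kq^{k+1}=h^k-\delta_k\theta_kd^{k+1}+\delta_kq^k+\delta_k\theta_kd^{k+1}=x^{k+1}.
\end{equation}
This is the lagged relation at $k+1$. Adding $\eta_{k+1}q^{k+1}$ and using $\delta_{k+1}=\delta_k^*+\eta_{k+1}$ gives $x^{k+2}=h^{k+1}+\delta_{k+1}q^{k+1}$.

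\emph{Case 2} ($\langle d_2^{k+1},q_*^k\rangle=0$). Here $\delta_k^*=0$ and $h^{k+1}=h^k+\delta_kq^k=x^{k+1}$, so again $x^{k+1}=h^{k+1}+\delta^*_kq^{k+1}$. Then $\delta_{k+1}=\eta_{k+1}$ yields $x^{k+2}=h^{k+1}+\delta_{k+1}q^{k+1}$. The starred versions follow identically, since all updates of $h_*$ and $q_*$ are the $A$-images of those of $h$ and $q$.

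\textbf{Main obstacle.} The delicate point is choosing the correct induction hypothesis. The lagged identity involving $\delta^*_{k-1}$ is not stated anywhere in the algorithm, but without it the definition $\delta_k=\delta^*_{k-1}+\eta_k$ cannot be connected to the update $x^{k+1}=x^k+\eta_kq^k$. Once this invariant is identified, every remaining step is a direct substitution.
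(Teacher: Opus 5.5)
Your proof is correct and is essentially the paper's argument: after reducing to \eqref{iteration-I} via Lemma~\ref{Lem_Eq1}, the paper proves $x^{k+1}=\widehat{h}^{k}+\widehat{\delta}_{k}q^{k}$ by induction using exactly your two-branch computation. Your lagged identity $x^{k}=h^{k}+\delta^{*}_{k-1}q^{k}$ is what the paper derives inline in each case, and it follows from the main hypothesis at step $k$, so it need not be carried as a separate hypothesis. The only organizational difference is that the paper first builds a shadow system $(\widehat{h}^{k},\widehat{\delta}_{k},\widehat{d}_{1}^{k+1},\ldots)$ driven by \eqref{iteration-I} and then identifies it with \eqref{iteration-II} by uniqueness of the recursion, whereas you induct on \eqref{iteration-II} directly. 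One small point: \eqref{iteration-II} uses $\|q_{*}^{k}\|_{2}^{2}$ itself, so the check $l_k=\|q_*^k\|_2^2$ is not needed for this lemma; that identity is what the paper verifies separately in the proof of Proposition~\ref{Theo}.
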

\begin{proof}
	According to Lemma \ref{Lem_Eq1}, the sequence $\{x^k\}_{k \geq 0}$ generated by Algorithm \ref{RCGLS_Alg} coincides with that generated by the iteration scheme \eqref{iteration-I}. Therefore, it suffices to establish the structural identity using the variables from \eqref{iteration-I}. 
	
	Set $\theta_{-1}=\frac{1}{\|d_{1}^{0}\|_{2}^{2}}$ if $d_{1}^{0} \neq 0$, and $\theta_{-1}=1$ otherwise. Let the sequences $\{\eta_{k}, \theta_{k}, x^{k}, r^{k}, q^{k}, q_{*}^{k}\}_{k \geq 0}$ be generated by the iteration scheme \eqref{iteration-I}. To map these to the scheme \eqref{iteration-II}, we set the initial variables as $\widehat{\delta}_{-1}^{*}=0$, $\widehat{d}_{1}^{0}=S_{0}^{\top}A^{\top}(b-Ax^{0})$, and $(\widehat{h}^{0},\widehat{h}_{*}^{0})=(x^{0}, Ax^{0})$. For $k \geq 0$, we construct the following recursive process:
	\begin{equation}\nonumber
		\begin{cases}
			\widehat{\delta}_{k} =\widehat{\delta}_{k-1}^{*} + \eta_k, \\
			\widehat{d}_{1}^{k+1}=S_{k+1}^{\top}A^{\top}r^{k+1}, \widehat{d}^{k+1}=S_{k+1}\widehat{d}_{1}^{k+1}, \widehat{d}_{2}^{k+1}=AS_{k+1}\widehat{d}_{1}^{k+1}, \\
			\text{If} \; \langle \widehat{d}_{2}^{k+1}, q_{*}^{k} \rangle \neq 0, \; \text{then} \;
			\widehat{\delta}_{k}^{*}=\widehat{\delta}_{k}, (\widehat{h}^{k+1}, \widehat{h}_{*}^{k+1})=(\widehat{h}^{k}-\widehat{\delta}_{k} \theta_{k} \widehat{d}^{k+1}, \widehat{h}_{*}^{k}-\widehat{\delta}_{k} \theta_{k} \widehat{d}_{2}^{k+1}). \\
			\text{If} \; \langle \widehat{d}_{2}^{k+1}, q_{*}^{k} \rangle = 0,  \; \text{then} \; \widehat{\delta}_{k}^{*}=0, (\widehat{h}^{k+1}, \widehat{h}_{*}^{k+1})=(\widehat{h}^{k} + \widehat{\delta}_{k} q^{k}, \widehat{h}_{*}^{k} + \widehat{\delta}_{k} q_{*}^{k}),
		\end{cases}
	\end{equation}
	where it maintains $\widehat{h}_{*}^{k+1}=A\widehat{h}^{k+1}$.
	
	We now show the identity $x^{k+1}=\widehat{h}^{k} + \widehat{\delta}_{k} q^{k}$ for $k \geq 0$ by induction. For the base case $k=0$, since $(\widehat{h}^{0}, \widehat{\delta}_{0})=(x^{0}, \eta_{0})$, we have $\widehat{h}^{0} + \widehat{\delta}_{0} q^{0}=x^{0}+\eta_{0} q^{0}=x^{1}$ from \eqref{iteration-I}. Assume by induction that $x^{j+1}=\widehat{h}^{j} + \widehat{\delta}_{j} q^{j}$ holds for all $j \leq k$ and some $k \geq 0$. We show that it also holds at step $k+1$, by considering two cases based on the values of $\langle \widehat{d}_{2}^{k+1}, q_{*}^{k} \rangle$:
	
	\textbf{Case 1.} If $\langle \widehat{d}_{2}^{k+1}, q_{*}^{k} \rangle \neq 0$, it follows from the construction and the update rules in \eqref{iteration-I} that
	\begin{align*}
		\widehat{h}^{k+1} + \widehat{\delta}_{k+1} q^{k+1}=&(\widehat{h}^{k}-\widehat{\delta}_{k} \theta_{k} \widehat{d}^{k+1})+(\widehat{\delta}_{k}^{*} + \eta_{k+1})q^{k+1} \\
		=&(\widehat{h}^{k}-\widehat{\delta}_{k} \theta_{k} \widehat{d}^{k+1})+\widehat{\delta}_{k}(q^{k}+\theta_{k}\widehat{d}^{k+1})+ \eta_{k+1}q^{k+1} \\
		=& (\widehat{h}^{k}+\widehat{\delta}_{k}q^{k})+\eta_{k+1}q^{k+1} \\
		=&x^{k+1}+\eta_{k+1}q^{k+1}=x^{k+2}.
	\end{align*}
	
	\textbf{Case 2.} If $\langle \widehat{d}_{2}^{k+1}, q_{*}^{k} \rangle = 0$, we have $\widehat{\delta}_{k}^{*}=0$ and $\widehat{h}^{k+1} = \widehat{h}^{k} + \widehat{\delta}_{k} q^{k} = x^{k+1}$. Thus,
	$$
	\widehat{h}^{k+1} + \widehat{\delta}_{k+1} q^{k+1}=x^{k+1}+(\widehat{\delta}_{k}^{*} + \eta_{k+1})q^{k+1}=x^{k+1}+\eta_{k+1}q^{k+1}=x^{k+2}.
	$$
	Therefore, by induction, $x^{k+1}=\widehat{h}^{k} + \widehat{\delta}_{k} q^{k}$ for $k \geq 0$. Using this identity, the variable $r^{k+1}$ in \eqref{iteration-I} can be expressed as $r^{k+1}=b-Ax^{k+1}=b-A(\widehat{h}^{k} + \widehat{\delta}_{k} q^{k})=b-\widehat{h}_{*}^{k}-\widehat{\delta}_{k} q_{*}^{k}$. Substituting this into the definition $\widehat{d}_{1}^{k+1}=S_{k+1}^{\top}A^{\top}r^{k+1}$ yields
	$$
	\widehat{d}_{1}^{k+1}=S_{k+1}^{\top}A^{\top}b-S_{k+1}^{\top}A^{\top}\widehat{h}_{*}^{k}-\widehat{\delta}_{k}S_{k+1}^{\top}A^{\top}q_{*}^{k}.
	$$
	With this expression, we consider the composite system of sequences
	$$
	\mathcal{T} := \{\widehat{\delta}_{k}, \widehat{\delta}_{k}^{*}, \widehat{d}_1^{k+1}, \widehat{d}^{k+1}, \widehat{d}_{2}^{k+1}, \widehat{h}^{k+1}, \widehat{h}_*^{k+1}, \eta_k, \theta_k, q^{k+1}, q_*^{k+1}\}_{k \geq 0},
	$$
	where the latter four sequences are provided by \eqref{iteration-I}. One can observe that the elements of $\mathcal{T}$ satisfy the same recursive rules as prescribed by the iteration scheme \eqref{iteration-II}. In addition, the initial conditions of $\mathcal{T}$ are the same as those defined in \eqref{iteration-II}. Consequently, the composite system $\mathcal{T}$ coincides with the sequence system generated by \eqref{iteration-II}. As a result, for the sequences $\{h^k, q^k, \delta_k\}_{k \geq 0}$ generated by the iteration scheme \eqref{iteration-II}, we have $x^{k+1} = h^{k} + \delta_{k} q^{k}$ for all $k \geq 0$.
	This completes the proof of the lemma.
\end{proof}

Now, we are ready to prove Proposition \ref{Theo}.
\begin{proof}[Proof of Proposition \ref{Theo}]
	Since Algorithms \ref{RCGLS_Alg} and \ref{RCGLS_EI_Q} share the same sampling matrices $\{S_k\}_{k\geq0}$ and initial point $x^{0}$, it follows from Lemma \ref{Lem_Eq2} that to establish the identity $x^{k+1} = h^{k} + \delta_{k} q^{k}$ for all $k \geq 0$, it suffices to show that the sequences $\{l_{k}, q_{*}^{k}\}_{k \geq 0}$ generated by Algorithm \ref{RCGLS_EI_Q} satisfy $l_{k}=\|q_{*}^{k}\|_{2}^{2}$.
	
	We proceed by induction on $k$. For the base case $k=0$, the identity $l_{0}=\|q_{*}^{0}\|_{2}^{2}$ holds by definition. Now assume that $l_{k}=\|q_{*}^{k}\|_{2}^{2}$ for some $k \geq 0$. We consider the update for $l_{k+1}$:
	
	\textbf{Case 1.} If $\langle \widehat{d}_{2}^{k+1}, q_{*}^{k} \rangle \neq 0$, then
	$$
	\|q_{*}^{k+1}\|_{2}^{2}=\|q_{*}^{k}+\theta_{k} d_{2}^{k+1}\|_{2}^{2}=l_{k}+\theta_{k}^{2} \|d_{2}^{k+1}\|_{2}^{2}+2\theta_{k} \langle d_{2}^{k+1}, q_{*}^{k} \rangle=\theta_{k}^{2} \|d_{2}^{k+1} \|_{2}^{2}-l_{k}=l_{k+1},
	$$
	where the third equality follows from the definition of $\theta_{k}$.
	
	\textbf{Case 2.} If $\langle \widehat{d}_{2}^{k+1}, q_{*}^{k} \rangle = 0$, then
	$$
	\|q_{*}^{k+1}\|_{2}^{2}=\|d_{2}^{k+1}\|_{2}^{2}=l_{k+1}.
	$$
	Hence, by induction, the identity $l_{k}=\|q_{*}^{k}\|_{2}^{2}$ holds for all $k \geq 0$.
	This completes the proof of the proposition.
\end{proof}

		\section{Application to ridge regression}
					
					In this section, we apply the proposed RCGLS method to solve the ridge regression problem, also known as the $\ell_2$-regularized least-squares problem
					\begin{equation}\label{eq:ridge_prob}
						\min_{x \in \mathbb{R}^d} \frac{1}{2} \|\bar{A}x - \bar{b}\|_2^2 + \frac{\lambda}{2} \|x\|_2^2,
					\end{equation}
					where $\bar{A} \in \mathbb{R}^{n \times d}$, $\bar{b} \in \mathbb{R}^n$, and the regularization parameter $\lambda > 0$. It is well-known that \eqref{eq:ridge_prob} can be equivalently reformulated as an augmented linear system \cite{ivanov2013kaczmarz, hefny2017rows}
					\begin{equation} \label{Aug}
						\underbrace{
							\begin{bmatrix}
								\sqrt{\lambda}I_{n} & \bar{A} \\
								\bar{A}^\top & -\sqrt{\lambda}I_{d}
						\end{bmatrix}}_{\textstyle \widehat{A}}
						\underbrace{
							\begin{bmatrix}
								y \\
								x
						\end{bmatrix}}_{\textstyle \widehat{x} \vphantom{\widehat{A}}}
						= 
						\underbrace{
							\begin{bmatrix}
								\bar{b} \\
								0
						\end{bmatrix}}_{\textstyle \widehat{b} \vphantom{\widehat{A}}}.
					\end{equation}
				We then construct the corresponding least-squares formulation for the augmented system \eqref{Aug}
					\begin{equation} \label{AugRR_LS}
						\min\limits_{\widehat{x} \in \mathbb{R}^{n+d}} \frac{1}{2} \|\widehat{A} \widehat{x} - \widehat{b}\|_{2}^{2}.
					\end{equation}
					
					Let $U=[\sqrt{\lambda}I_{n} \; \bar{A}]^\top$ and $V=[\bar{A}^{\top} \; -\sqrt{\lambda}I_{d}]^{\top}$ denote the column blocks of $\widehat{A}$. One can verify that the range spaces of $U$ and $V$ are mutually orthogonal, since
					\[
					V^{\top}U = [\bar{A}^{\top} \; -\sqrt{\lambda}I_{d}] [\sqrt{\lambda}I_{n} \; \bar{A}]^\top = \sqrt{\lambda}\bar{A}^{\top} - \sqrt{\lambda}\bar{A}^{\top} = 0.
					\]
					Using this orthogonality property, the objective function in \eqref{AugRR_LS} admits a separable decomposition 
					\[ 
						\begin{aligned}
							\frac{1}{2} \|\widehat{A} \widehat{x} - \widehat{b}\|_{2}^{2} &= \frac{1}{2} \left\| Vx+Uy - \widehat{b} \right\|_{2}^{2} = \frac{1}{2} \|Vx - \widehat{b}\|_{2}^{2} + \langle Vx - \widehat{b}, Uy \rangle + \frac{1}{2} \|Uy\|_{2}^{2} \\
							&= \frac{1}{2} \|Vx - \widehat{b}\|_{2}^{2} + \frac{1}{2} \|Uy - \widehat{b}\|_{2}^{2} - \frac{1}{2} \|\widehat{b}\|_{2}^{2},
						\end{aligned}
					\]
					where the last equality follows from $\langle Vx, Uy \rangle=x^{\top}(V^{\top}U)y=0$.  This equivalence implies that solving the original ridge regression problem \eqref{eq:ridge_prob} reduces to solving the separable optimization problem
					\begin{equation} \label{AugRR_LS-1}
						\min\limits_{(x,y) \in \mathbb{R}^{d}\times \mathbb{R}^{n}} \frac{1}{2} \|Vx - \widehat{b}\|_{2}^{2} + \frac{1}{2} \|Uy - \widehat{b}\|_{2}^{2}
					\end{equation}
					 with respect to either $x$ or $y $ individually. Specifically, we can either solve
					 \begin{equation} \label{RR_ELSx}
					 	\min_{x \in \mathbb{R}^{d}} \frac{1}{2} \|Vx - \widehat{b}\|_{2}^{2}
					 \end{equation}
					 to directly obtain the desired variable $x$, or solve
					 \begin{equation} \label{RR_ELSy}
					 	\min_{y \in \mathbb{R}^{n}} \frac{1}{2} \|Uy - \widehat{b}\|_{2}^{2}
					 \end{equation}
					 for the auxiliary variable $y$ and then recover $x$ via the structural relation $x = \bar{A}^{\top}y / \sqrt{\lambda}$, which is derived from the second block row of \eqref{Aug}.

					 The proposed RCGLS method can be directly applied to the above least squares problems with inputs $V$ (or $U$) and $\widehat{b}$. More importantly, the inherent block structure of $V$ and $U$ enables efficient structured implementation.
					 To elaborate, we first consider  \eqref{RR_ELSx}.  At each iteration, the augmented residual $\widehat{r}^k := \widehat{b} - Vx^k$ can be explicitly expressed as
					 \[
					 \widehat{r}^k = \left[ (\bar{b} - \bar{A}x^k)^\top, \, (\sqrt{\lambda}x^k)^\top \right]^\top.
					 \]
					 This structural form enables the update of the high-dimensional residual $\widehat{r}^k \in \mathbb{R}^{n+d}$ to rely only on the low-dimensional partial residual $g^{k}:=\bar{b} - \bar{A}x^k \in \mathbb{R}^{n}$. Accordingly, the core auxiliary vector $w^{k}:=S_{k}^{\top}V^{\top} \widehat{r}^k$ simplifies to $w^{k}=S_{k}^{\top}\bar{A}^{\top}g^{k}-\lambda S_{k}^{\top}x^{k}$. Similarly, the computation of $Vp^{k} \in \mathbb{R}^{n+d}$ is reduced to evaluating its low-dimensional component $u^k := \bar{A}p^k$.
					Analogous structural simplifications apply to \eqref{RR_ELSy} by exploiting the block structure of $U$.

					 The selection between  \eqref{RR_ELSx} and \eqref{RR_ELSy} can depend on the availability of row or column information.  For instance, suppose that $S_k = e_{i_k}$, where  $e_{i_k} \in \mathbb{R}^n$ or $e_{i_k} \in \mathbb{R}^d$ is determined by the problem dimension. In this case, $S_{k}^{\top}\bar{A}^{\top}$ exploits the column information of $\bar{A}$, while $S_{k}^{\top}\bar{A}$ utilizes the row information of $\bar{A}$.
					In addition, the selection can also depend on the data dimensions. Note that $V \in \mathbb{R}^{(n+d) \times d}$ and $U \in \mathbb{R}^{(n+d) \times n}$, \eqref{RR_ELSx} is preferable for $n \geq d$ due to its fewer column dimensions, while \eqref{RR_ELSy} is adopted for $n < d$.
					 
					 Now, we have already applied the RCGLS method to the ridge regression problem \eqref{eq:ridge_prob} described in Algorithm \ref{RCGLS-RR}.  We note that the efficient implementation strategy presented in Algorithm \ref{RCGLS_EI_Q} is also applicable to ridge regression.

				\begin{algorithm}[htpb]
					\caption{RCGLS for solving ridge regression (RidgeRCGLS) \label{RCGLS-RR}}
					\begin{algorithmic}
						\Require
						$\bar{A} \in \mathbb{R}^{n \times d}$, $\bar{b} \in \mathbb{R}^n$, and $\lambda >0$.
						
							\noindent If only the column information is available or $n \geq d$
							
							Given distribution $\mathcal{D}$, initialize $x^0 \in \mathbb{R}^d$, and update $x^k$ via Option I.
							
							\noindent If only the row information is available or $n < d$
							
							Given distribution $\mathcal{D}$, initialize $y^0 \in \mathbb{R}^n$ and set $x^{0} = \bar{A}^{\top}y^{0}/\sqrt{\lambda}$, and update $x^k$ via Option II.
						
						\Ensure
						The approximate solution $x^{k}$.
					\end{algorithmic}
				\end{algorithm}

		\begin{table}[htpb]
			\centering
			{
				\begin{tabular}{ |l| }
					\hline
					\qquad \qquad \qquad \qquad \qquad \qquad \qquad \textbf{Option I}  \qquad \qquad \qquad \qquad \qquad \qquad \qquad\\
					1: Randomly select a sampling matrix $S_{0}$ from $\mathcal{D}$. \\
					2:  Set $g^{0}=\bar{b}-\bar{A}x^{0}$, $w^{0}= S_{0}^{\top}\bar{A}^{\top}g^{0}-\lambda S_{0}^{\top}x^{0}$, $p^{0}=S_{0}w^{0}$, $u^{0}=\bar{A}S_{0}w^{0}$, \\
                    \quad \, and $\varsigma_{0}=\|u^{0}\|_{2}^{2}+\lambda \|p^{0}\|_{2}^{2}$. \\
					3: For $k=0, 1, \ldots$ until the stopping rule is satisfied: \\
					4: Set $\mu_{k}=\frac{\|w^{k}\|_{2}^{2}}{\varsigma_{k}}$. \\
					5: Update $x^{k+1}=x^{k}+\mu_{k} p^{k}$ and $g^{k+1}=g^{k}-\mu_{k} u^{k}$. \\
					6: Randomly select a sampling matrix $S_{k+1}$ from $\mathcal{D}$. \\
					7: Compute $w^{k+1}= S_{k+1}^{\top}\bar{A}^{\top}g^{k+1}-\lambda S_{k+1}^{\top}x^{k+1}$, \\
					\qquad \qquad \qquad \qquad\;\;\, $\tau_k=-\frac{\langle \bar{A}S_{k+1}w^{k+1}, u^{k} \rangle+\lambda \langle S_{k+1}w^{k+1}, p^{k} \rangle}{\varsigma_{k}}$, \\
					\qquad \qquad \qquad \qquad $p^{k+1}= S_{k+1}w^{k+1}+\tau_k p^{k}$, \\
					\qquad \qquad \qquad \qquad $u^{k+1}= \bar{A}S_{k+1}w^{k+1}+\tau_k u^{k}$, \\
                    \qquad \qquad \qquad \quad \;\;\;\, $\varsigma_{k+1}= -\tau_{k}^{2} \varsigma_{k}+\lambda \|S_{k+1}w^{k+1}\|_{2}^{2}+\|\bar{A}S_{k+1}w^{k+1}\|_{2}^{2}$. \\					\hline
				\end{tabular}
			}
		\end{table}
		
		\begin{table}[htpb]
			\centering
			{
				\begin{tabular}{ |l| }
					\hline
					\qquad \qquad \qquad \qquad \qquad \qquad \qquad \textbf{Option II}  \qquad \qquad \qquad \qquad \qquad \qquad \qquad\\
					1: Randomly select a sampling matrix $S_{0} $ from $\mathcal{D}$. \\
					2:  Set $w^{0}=\sqrt{\lambda}S_{0}^{\top}\bar{b}-\sqrt{\lambda}S_{0}^{\top}\bar{A}x^{0}-\lambda S_{0}^{\top} y^{0}$, $p^{0}=S_{0}w^{0}$, $u^{0}=\bar{A}^{\top}S_{0}w^{0}$, \\
                    \quad \, and $\varsigma_{0}=\|u^{0}\|_{2}^{2}+\lambda \|p^{0}\|_{2}^{2}$. \\
                    3: For $k=0, 1, \ldots$ until the stopping rule is satisfied: \\
					4: Set $\mu_{k}=\frac{\|w^{k}\|_{2}^{2}}{\varsigma_{k}}$. \\
					5: Update $y^{k+1}=y^{k}+\mu_{k} p^{k}$ and $x^{k+1}=x^{k}+\frac{\mu_{k}}{\sqrt{\lambda}} u^{k}$. \\
					6: Randomly select a sampling matrix $S_{k+1} $ from $\mathcal{D}$. \\
					7: Compute $w^{k+1}= \sqrt{\lambda} S_{k+1}^{\top}\bar{b}-\sqrt{\lambda} S_{k+1}^{\top}\bar{A}x^{k+1}-\lambda S_{k+1}^{\top} y^{k+1}$, \\
					\qquad \qquad \qquad \qquad\;\;\, $\tau_k=-\frac{\langle \bar{A}^{\top}S_{k+1}w^{k+1}, u^{k} \rangle+\lambda \langle S_{k+1}w^{k+1}, p^{k} \rangle}{\varsigma_{k}}$, \\
					\qquad \qquad \qquad \qquad $p^{k+1}= S_{k+1}w^{k+1}+\tau_k p^{k}$, \\
					\qquad \qquad \qquad \qquad $u^{k+1}= \bar{A}^{\top}S_{k+1}w^{k+1}+\tau_k u^{k}$, \\
					\qquad \qquad \qquad \quad \;\;\;\, $\varsigma_{k+1}= -\tau_{k}^{2} \varsigma_{k}+\lambda \|S_{k+1}w^{k+1}\|_{2}^{2}+\|\bar{A}^\top S_{k+1}w^{k+1}\|_{2}^{2}$. \\					\hline
				\end{tabular}
			}
		\end{table}
\begin{remark}
			The block-orthogonal structure of the coefficient matrix $\widehat{A}$ in \eqref{Aug} provides a theoretical explanation for the ``wasted iterations'' identified by Hefny et al. \cite{hefny2017rows} in the Ivanov--Zhdanov (IZ) method. Specifically, the IZ method essentially applies  the RK method to the augmented system $\widehat{A}\widehat{x} = \widehat{b}$. At each iteration, a row index $i_k$ is sampled randomly to update the iterate via
			\begin{equation} \nonumber
				\widehat{x}^{k+1} = \widehat{x}^k + \frac{\widehat{b}_{i_k} - \widehat{A}_{i_k,:}\widehat{x}^k}{\|\widehat{A}_{i_k,:}\|_2^2} \widehat{A}_{i_k,:}^\top.
			\end{equation}
			To analyze the behavior of the process, we consider the partitioned form of $\widehat{A}\widehat{x} = \widehat{b}$:
			\[
				\begin{bmatrix}
					U^\top \\
					V^\top
				\end{bmatrix} \widehat{x} =
				\begin{bmatrix}
					\bar{b} \\
					0
				\end{bmatrix}.
			\]
			Suppose the current iterate $\widehat{x}^k$ satisfies the first block of the system, i.e., $U^\top \widehat{x}^k = \bar{b}$. This implies that $\widehat{x}^k$ lies in the affine solution set $\mathcal{X}_{*,1} := \{z \mid U^\top z = \bar{b}\}$, which can be expressed as $\mathcal{X}_{*,1} = \widehat{x}^k + \operatorname{Null}(U^\top)$. In this case, if the index $i_k$ is selected from the first block of rows, the equality $U^\top \widehat{x}^k = \bar{b}$ implies that $\widehat{b}_{i_k} - \widehat{A}_{i_k,:} \widehat{x}^k = 0$, leading to a stagnant update $\widehat{x}^{k+1} = \widehat{x}^k$. Conversely, if $i_k$ is selected from the second block of rows, the update direction $\widehat{A}_{i_k,:}^\top$ lies in $\operatorname{Range}(V)$. Since $V^\top U = 0$, it follows that $\operatorname{Range}(V) \subseteq \operatorname{Null}(U^\top)$, which implies the update remains within the affine set $\mathcal{X}_{*,1}$
			\[
				\widehat{x}^{k+1} \in \widehat{x}^k + \operatorname{Range}(V) \subseteq \widehat{x}^k + \operatorname{Null}(U^\top) = \mathcal{X}_{*,1}.
			\]
			Consequently, once the first block of equations is satisfied, this state is preserved for all future iterations, rendering any subsequent sampling from this block a ``wasted'' step with a zero step-size.
		\end{remark}
		
		
		\begin{remark}
			RidgeRCGLS can be viewed as an accelerated version of the RGS and RK variants proposed in \cite{hefny2017rows} for the case of single-coordinate sampling. Specifically, these variants can be regarded as applying the standard RCD method with exact line search to the least squares problems \eqref{RR_ELSx} and \eqref{RR_ELSy}, respectively. Under this interpretation, the resulting iterates $x$ match those in \cite{hefny2017rows}, while the iterates $y$ are identical up to a scaling factor $\sqrt{\lambda}$. Accordingly, RidgeRCGLS achieves a convergence upper bound at least that of the corresponding RK and RGS variants, as stated in Remark \ref{rem:comparison_and_acceleration}.
		\end{remark}

			\section{Numerical experiments}
		
		
		In this section, we present preliminary numerical results for the proposed RidgeRCGLS method. We compare our method with the RidgeGRCD method and the RidgeSketch method with heuristic increasing momentum (HImRidgeSketch) proposed in \cite[Section 10]{gazagnadou2022ridgesketch}. We note that the RidgeGRCD method is essentially the application of the GRCD method introduced in Remark \ref{rem:comparison_and_acceleration} to ridge regression problems, and it can also be interpreted as the block variant of the improved RK and RGS method developed in \cite{hefny2017rows}. All the methods are implemented in MATLAB R2022b for macOS Monterey on a MacBook Air with Apple M2 CPU and 16 GB memory. The code to reproduce our results can be found at  \href{https://github.com/xiejx-math/RCGLS}{https://github.com/xiejx-math/RCGLS}.
		%
		%
		
		\subsection{Numerical setup}
		
		We consider two types of matrices, i.e., synthetic and real-world datasets.  For synthetic datasets, we follow the test framework introduced in \cite{ozaslan2023m} and generate the coefficient matrix $\bar{A} \in \mathbb{R}^{n \times d}$  under three settings:  overdetermined ($n > d$), square ($n = d$), and underdetermined ($n < d$). To construct problems involving both column correlation and ill-conditioning, we first sample a base matrix from a multivariate Gaussian distribution $\mathcal{N}(\mathbf{1}_{d}, \Gamma)$, where $\mathbf{1}_d \in \mathbb{R}^d$ denotes an all-ones vector and the covariance matrix entries are defined as $\Gamma_{ij} = 5 \cdot 0.7^{|i-j|}$. Subsequently, utilizing the singular value decomposition, we replace its singular values with those derived from the \texttt{phillips} test problem in Regularization Tools (RegTools) \cite{hansen1994regularization}, and scale them to set the condition number $\kappa(\bar{A})$ to $10^4$. Adopting the  reference signal $\bar{x}$ provided by RegTools, we generate the observation vector as $\bar{b} = \bar{A}\bar{x} + \bar{b}_{e}$, where the additive Gaussian noise $\bar{b}_{e}$ is scaled to achieve a relative noise level $\|\bar{b}_{e}\|_2 / \|\bar{A}\bar{x}\|_2 = 0.1$.  The real-world datasets are obtained from LIBSVM \cite{chang2011libsvm}, and their corresponding matrices $\bar{A}$ and vectors $\bar{b}$ are used directly. 
		

		For the underlying sampling strategy, we adopt the following uniform sampling scheme. We use $q$ to denote the block size and let $m := \min\{n, d\}$.
		 At every step, we uniformly sample $q$ unique indices to form the index set $\mathcal{J} \subseteq [m]$ with cardinality $|\mathcal{J}|=q$. Since there are $\binom{m}{q}$ possible choices for $\mathcal{J}$, the probability of selecting any particular set is $\text{Prob}(\mathcal{J}) = 1/\binom{m}{q}$. The corresponding randomized sketching matrix is constructed  as $S = I_{:, \mathcal{J}}\in\mathbb{R}^{m\times q}$. Under this sampling scheme, we refer to the specific implementations of RidgeRCGLS and RidgeGRCD as RidgeRCGLSU and RidgeGRCDU, respectively. The HImRidgeSketch method also employs uniform sampling (HImRidgeSketchU), and all parameter configurations follow the settings in \cite [Section 10]{gazagnadou2022ridgesketch}.

		All algorithms are initialized from zero. Specifically, we set $x^{0} = 0$ for $n \geq d$. For $n < d$, we initialize $y^0 = 0$, which yields $x^0 = \bar{A}^\top y^0 / \sqrt{\lambda} = 0$. The computations are terminated once the relative solution error (RSE), defined as RSE $=\|x^{k}-x^{*}\|_{2}^{2}/ \|x^{0}-x^{*}\|_{2}^{2}$, is less than a specific error tolerance.  In practice, we consider a variable as zero when it is less than \texttt{eps}. For each experiment, we run $10$ independent trials. In addition, both row and column information of $\bar{A}$ is accessible during implementation, enabling all algorithms to adaptively select the appropriate option according to the relative sizes of $n$ and $d$.
		

		\subsection{Acceleration efficiency and the impact of block size $q$}
		
		In this subsection, we evaluate the acceleration efficiency of the proposed RidgeRCGLSU method and investigate the impact of block size $q$ on its convergence performance using synthetic datasets. In particular, we compare RidgeRCGLSU with the unaccelerated baseline RidgeGRCDU  to demonstrate the acceleration gains. We set the regularization parameter $\lambda$ to $0.05$ and $0.005$ for all corresponding tests. 
		  
		Figures \ref{figureR3} and \ref{figureR4} report the computational CPU time and the number of epochs required by each method. We define the epoch as $(k \cdot \frac{q}{m})$, which guarantees consistent computational overhead for a full data pass across all compared methods. 
		The bold line represents the median computed over $10$ independent runs. The lightly shaded area signifies the range from the minimum to the maximum values, while the darker shaded one indicates the data lying between the $25$th and $75$th quantiles.

		
		
		\begin{figure}[hptb]
			\centering
			\begin{tabular}{cc}
				\includegraphics[width=0.33\linewidth]{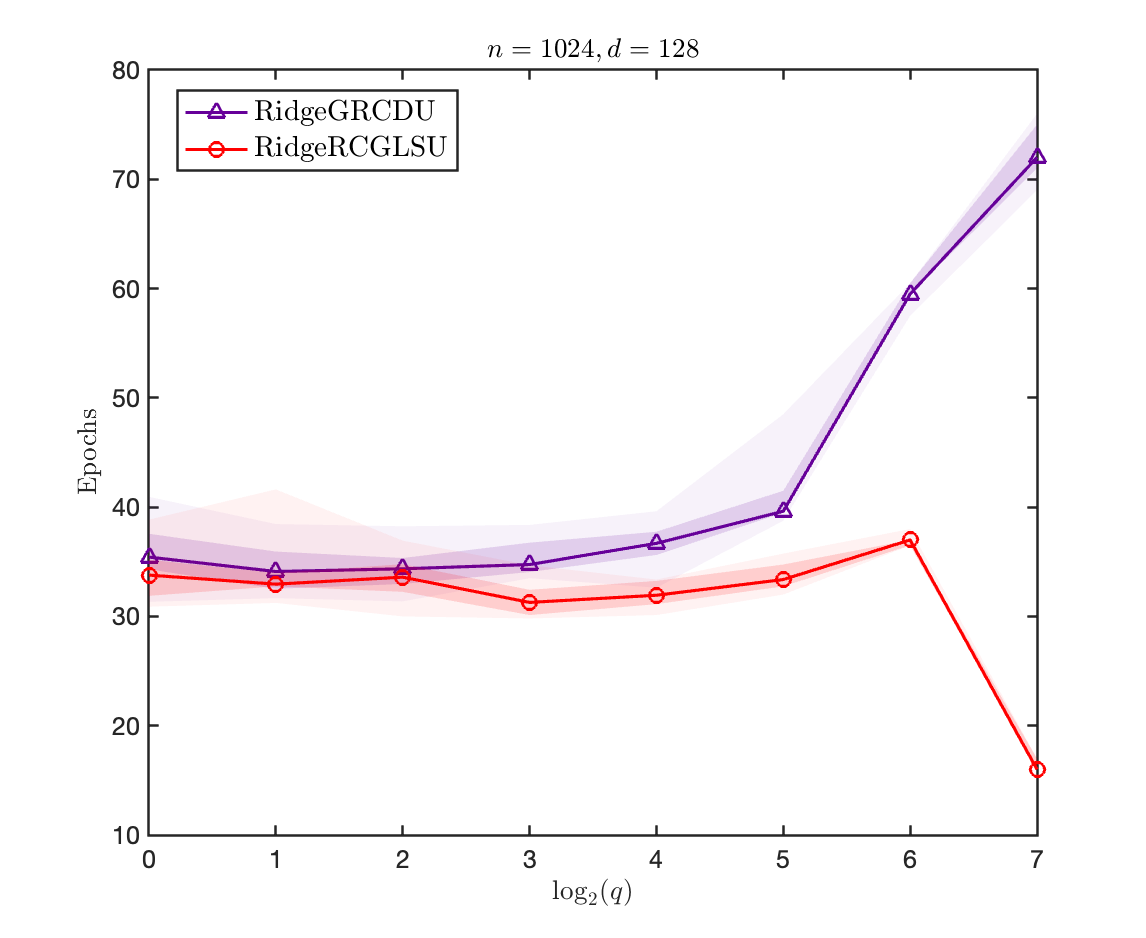}
				\includegraphics[width=0.33\linewidth]{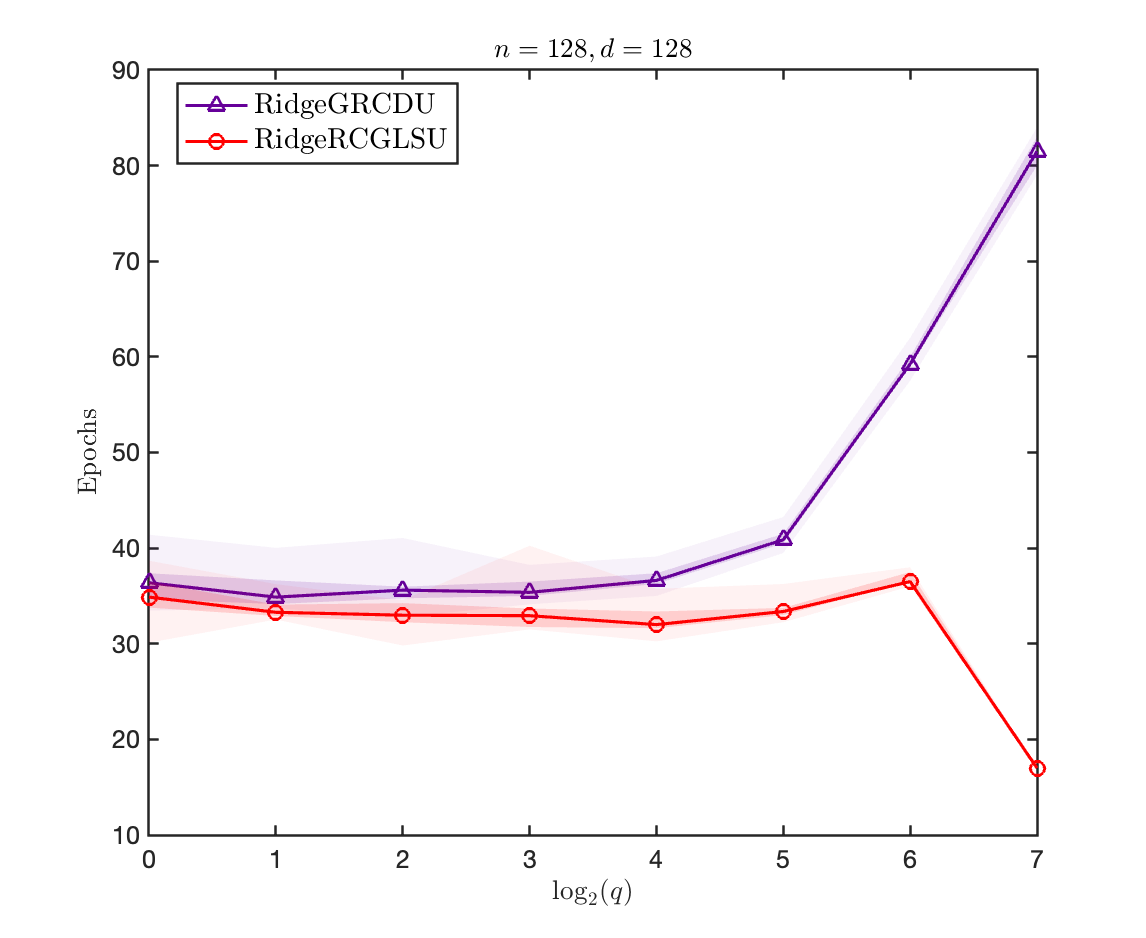}
				\includegraphics[width=0.33\linewidth]{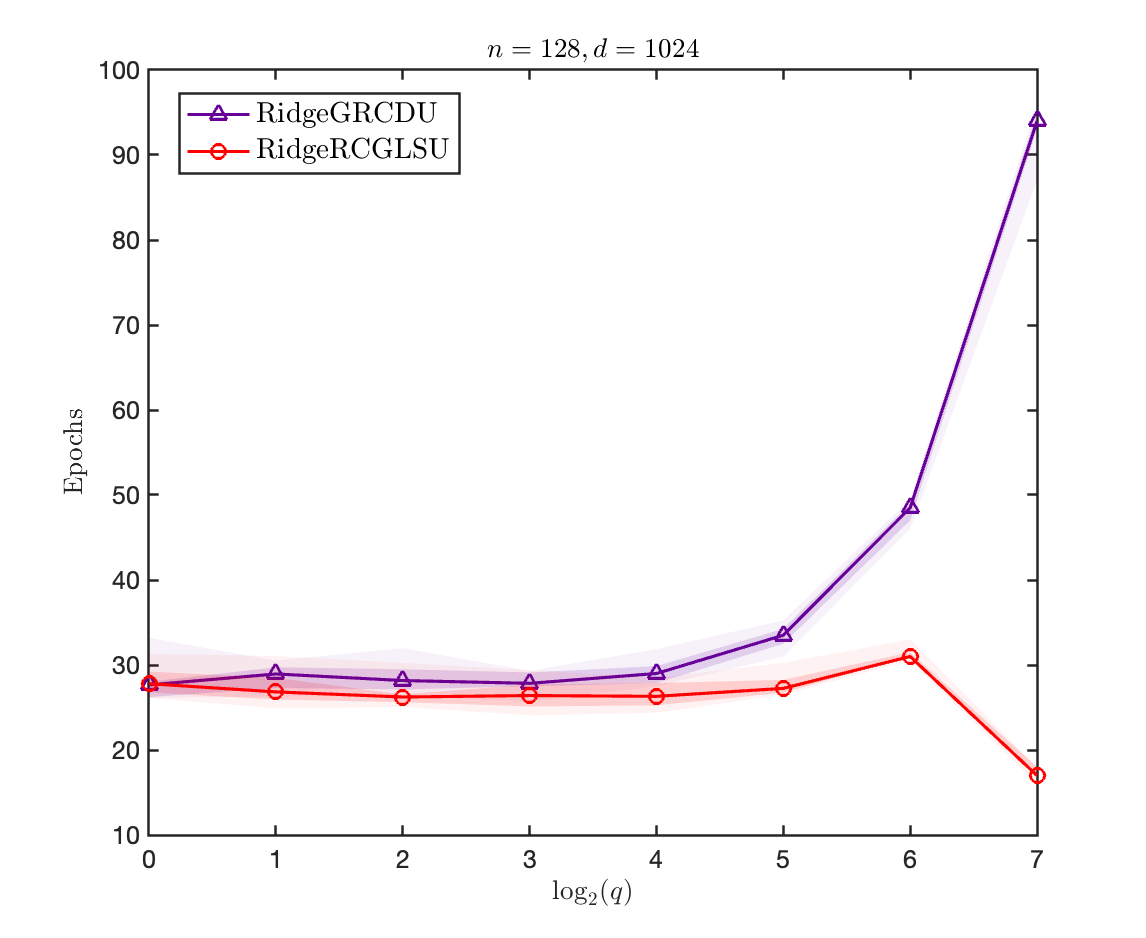}\\
				\includegraphics[width=0.33\linewidth]{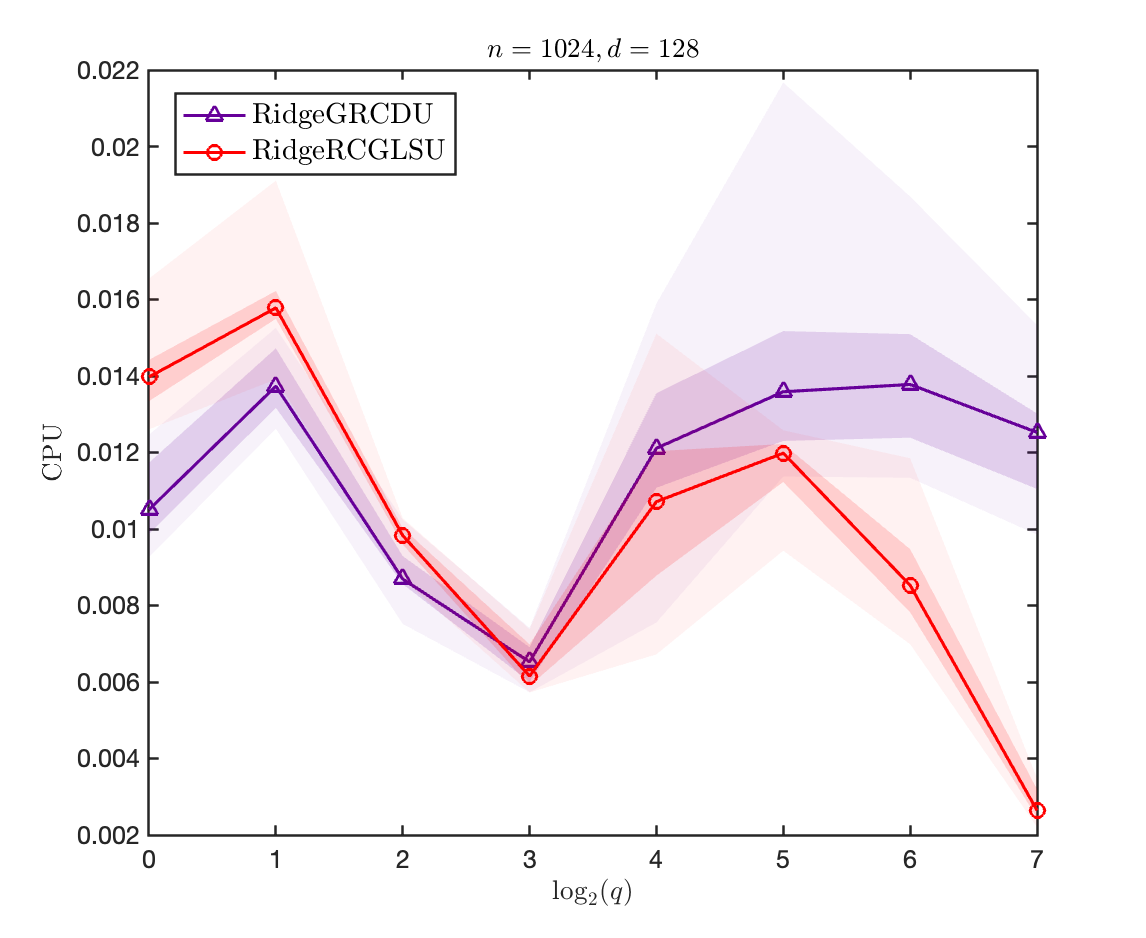}
				\includegraphics[width=0.33\linewidth]{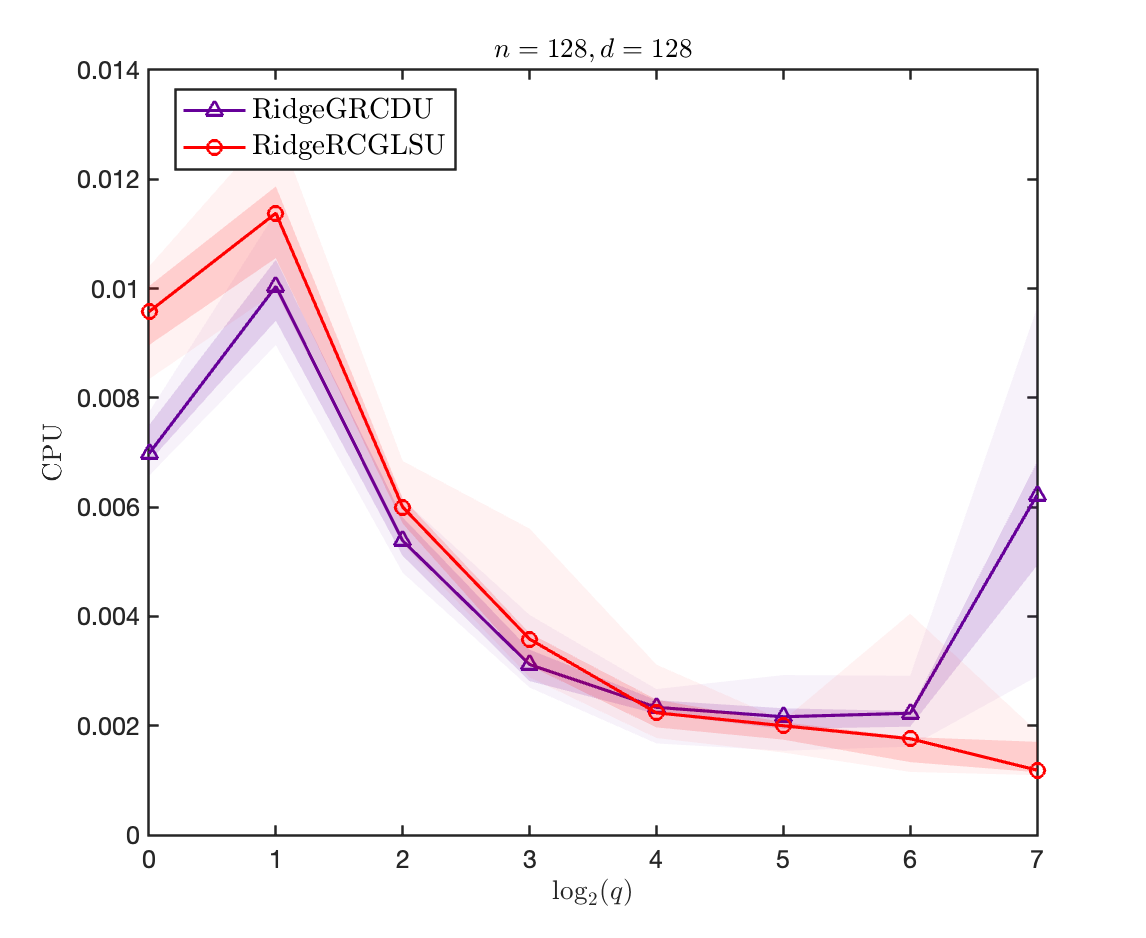}
				\includegraphics[width=0.33\linewidth]{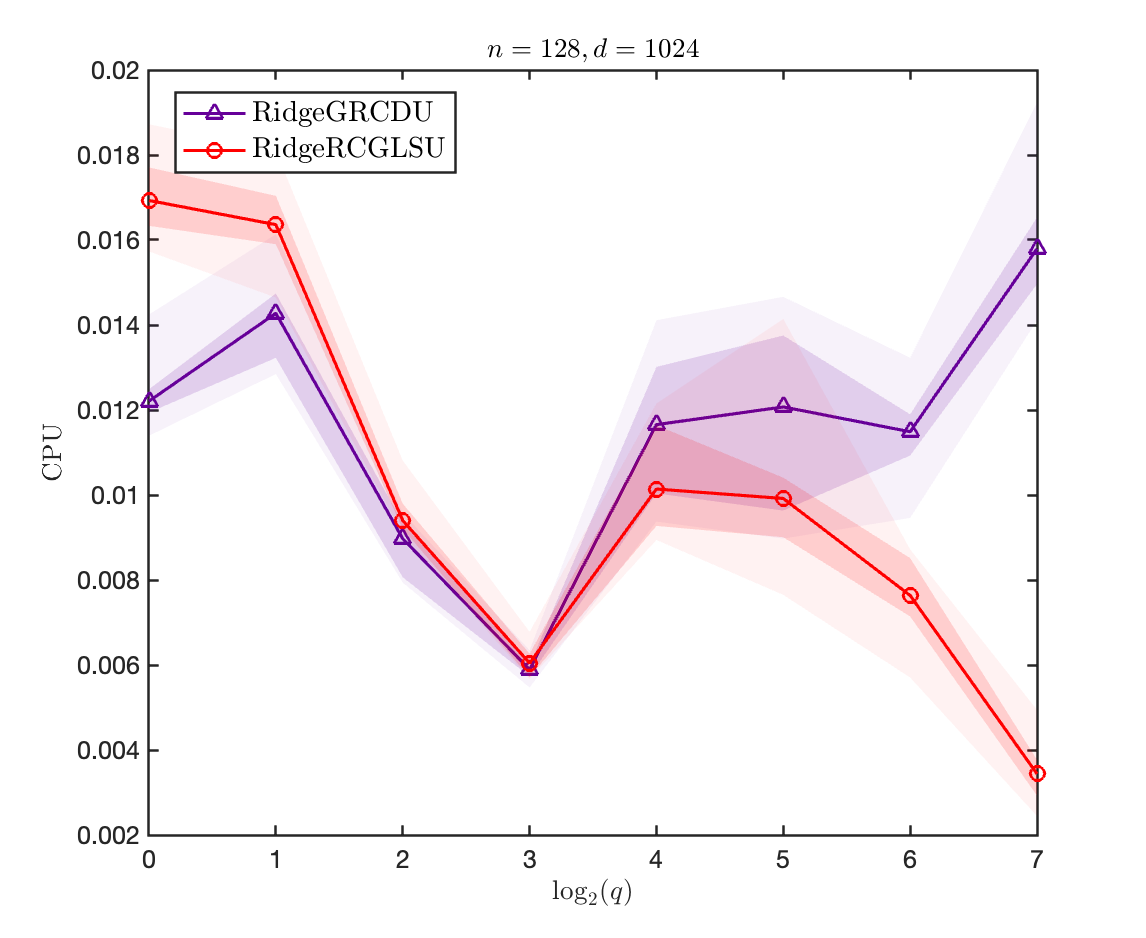}
			\end{tabular}
			\caption{Figures depict the evolution of the number of epochs and the CPU time with respect to the block size $q$ for synthetic datasets, with $\lambda=0.05$. The title of each plot indicates the values of $n$ and $d$. All computations are terminated once $\text{RSE}<10^{-10}$. }
			\label{figureR3}
		\end{figure}
		
		\begin{figure}[hptb]
			\centering
			\begin{tabular}{cc}
				\includegraphics[width=0.33\linewidth]{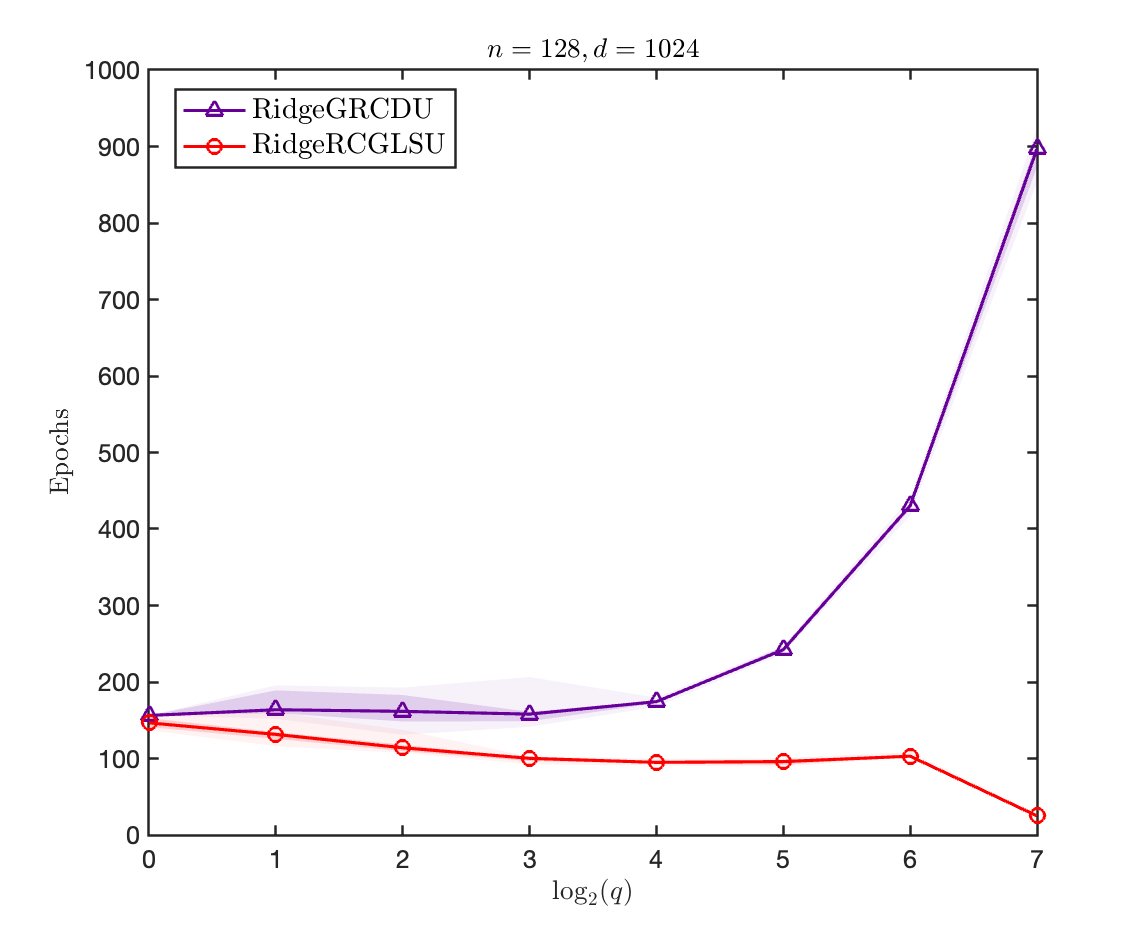}
				\includegraphics[width=0.33\linewidth]{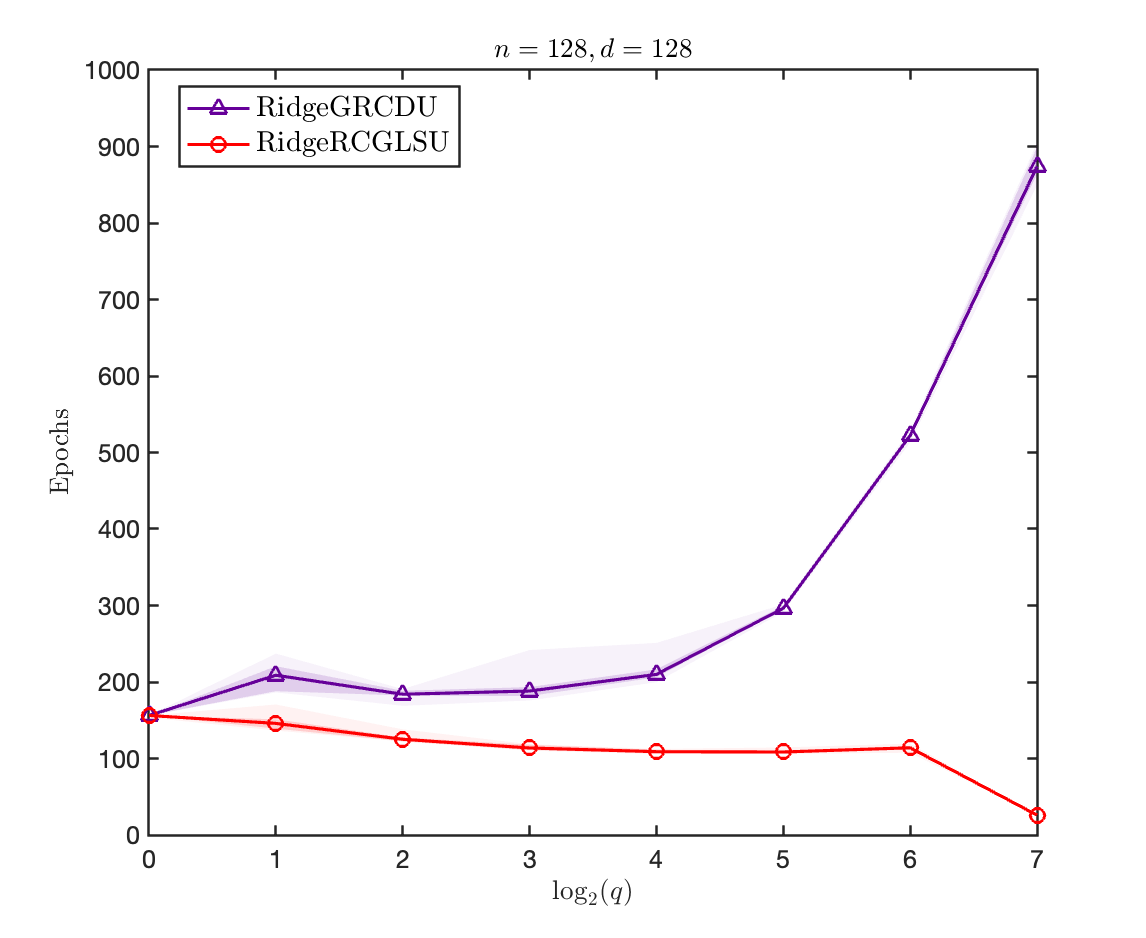}
				\includegraphics[width=0.33\linewidth]{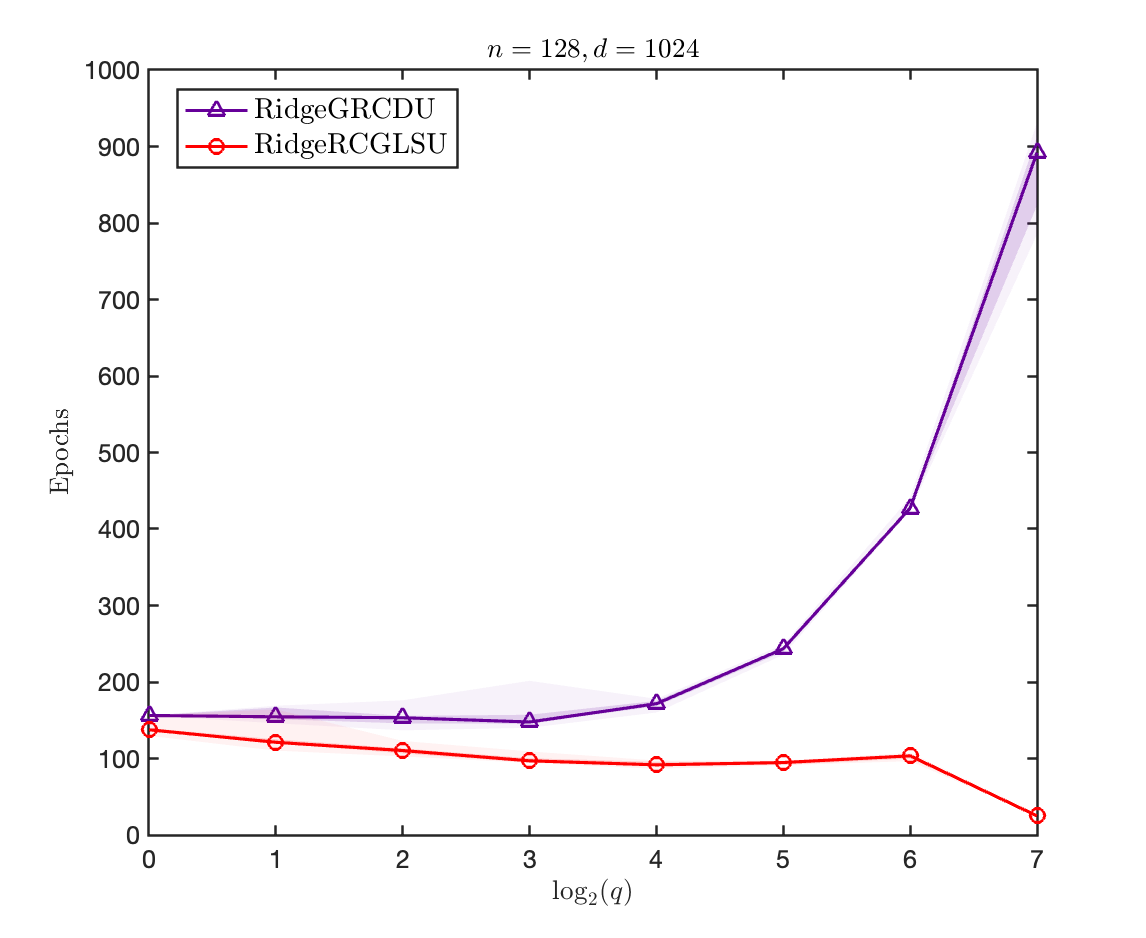}\\
				\includegraphics[width=0.33\linewidth]{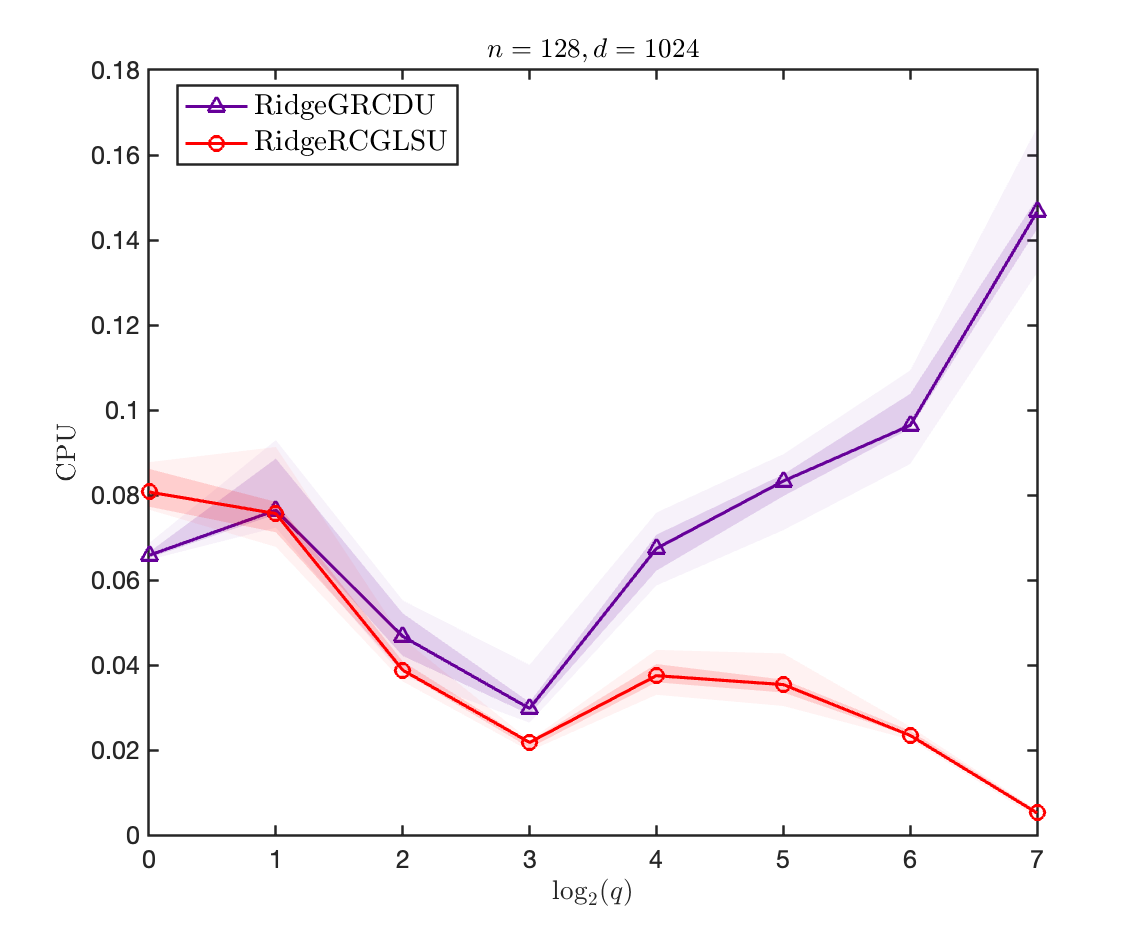}
				\includegraphics[width=0.33\linewidth]{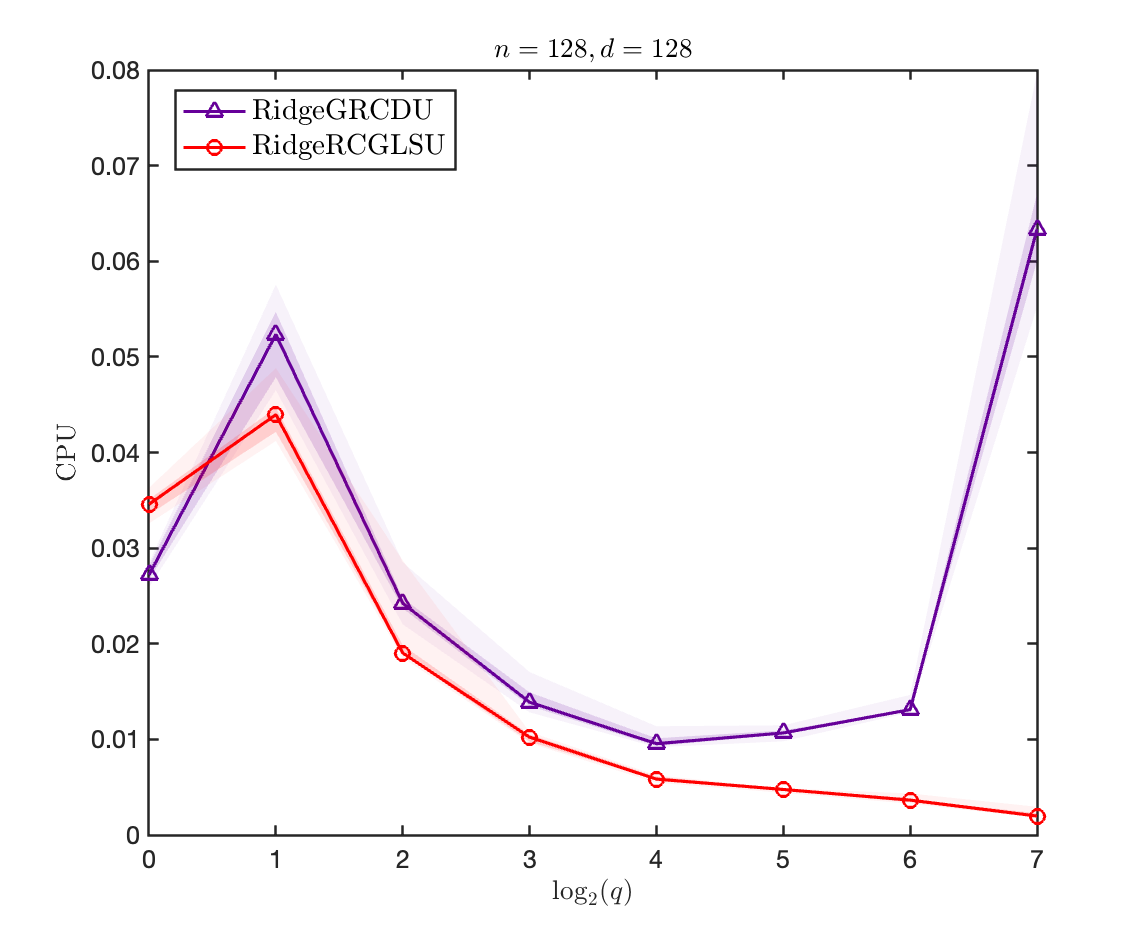}
				\includegraphics[width=0.33\linewidth]{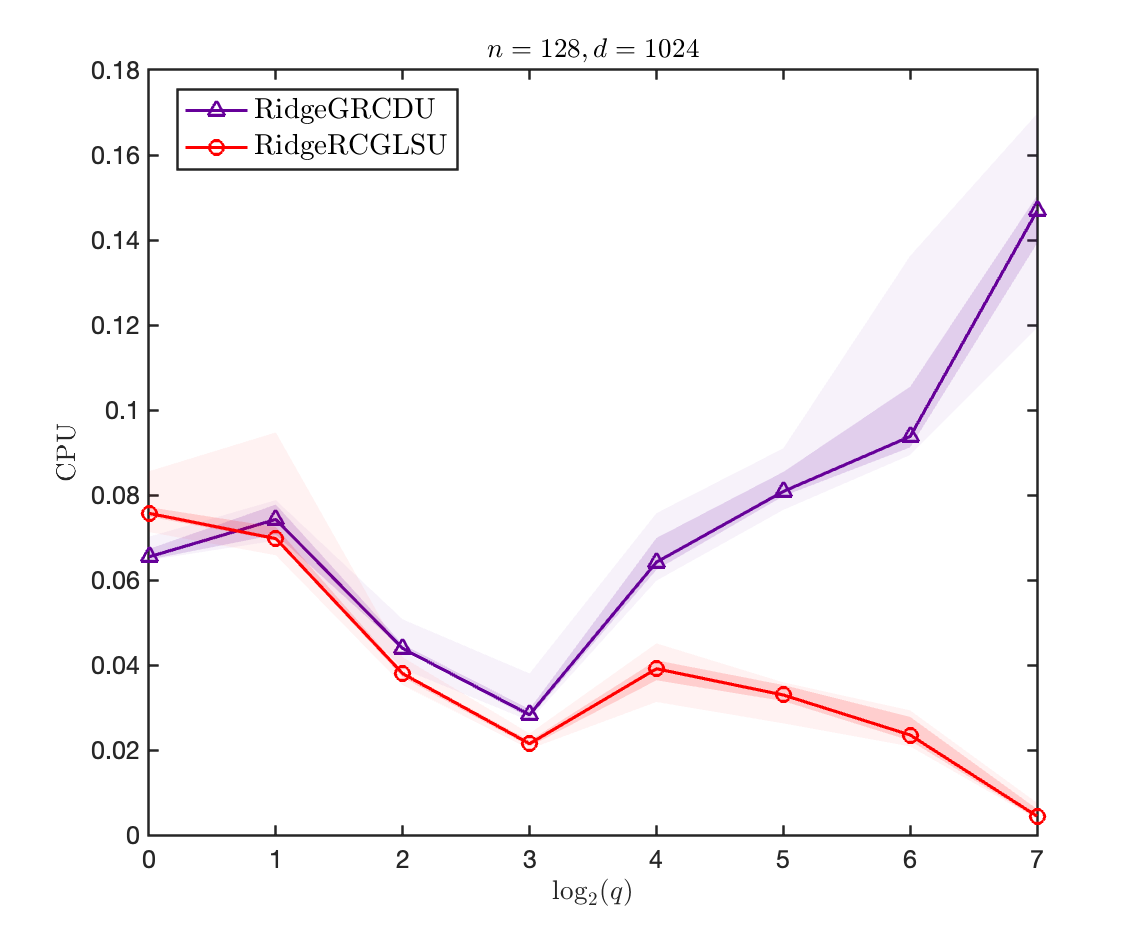}
			\end{tabular}
			\caption{Figures depict the evolution of the number of epochs and the CPU time with respect to the block size $q$ for synthetic datasets, with $\lambda=0.005$. The title of each plot indicates the values of $n$ and $d$. All computations are terminated once $\text{RSE}<10^{-10}$. }
			\label{figureR4}
		\end{figure}
		
		It can be observed  that RidgeRCGLSU consistently requires fewer epochs than RidgeGRCDU for any fixed block size $q$, and such acceleration advantages become more prominent for large $q$ values. In terms of CPU time, RidgeGRCDU performs better when $q$ is relatively small, owing to the lower per-iteration computational cost of RidgeGRCDU compared with RidgeRCGLSU. Nevertheless, RidgeRCGLSU outperforms RidgeGRCDU in CPU time as $q$ increases (e.g., $q \geq 16$), validating the practical efficiency of the proposed accelerated method for moderate and large block sizes.

		\subsection{Comparison to HImRidgeSketchU}
	
		We compare the performance of RidgeRCGLSU with HImRidgeSketchU in this subsection. 
		Figures \ref{figureR5} and \ref{figureR6} present the computational results for synthetic datasets with $\lambda=0.05$ and $\lambda=0.005$, respectively. It can be observed that RidgeRCGLSU delivers better scalability in terms of flops compared with HImRidgeSketchU. The performance gap gradually enlarges with increasing matrix dimensions. Meanwhile, RidgeRCGLSU achieves faster CPU runtime than HImRidgeSketchU in all tested cases.
		
		
		\begin{figure}[hptb]
			\centering
			\begin{tabular}{cc}
				\includegraphics[width=0.33\linewidth]{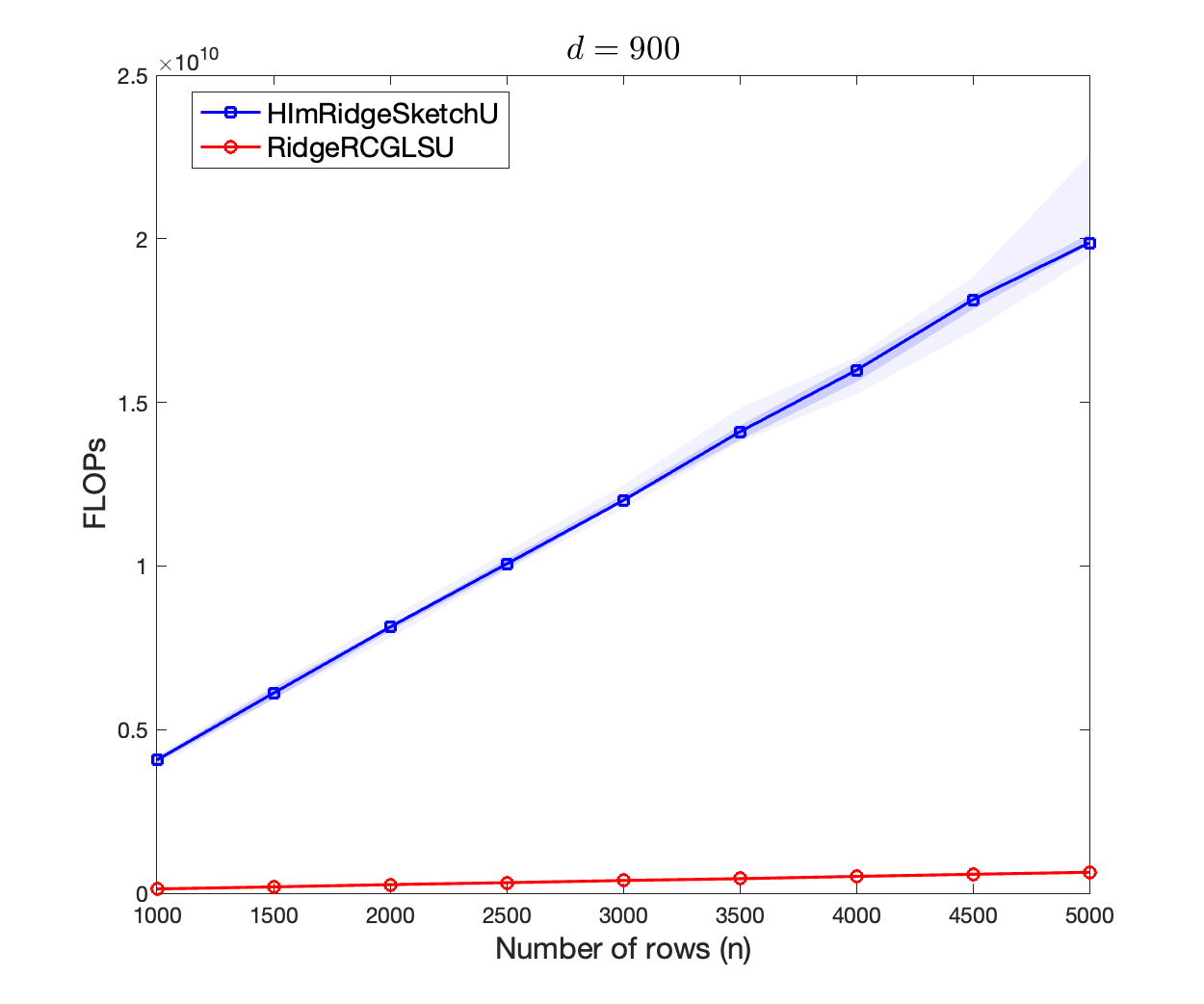}
				\includegraphics[width=0.33\linewidth]{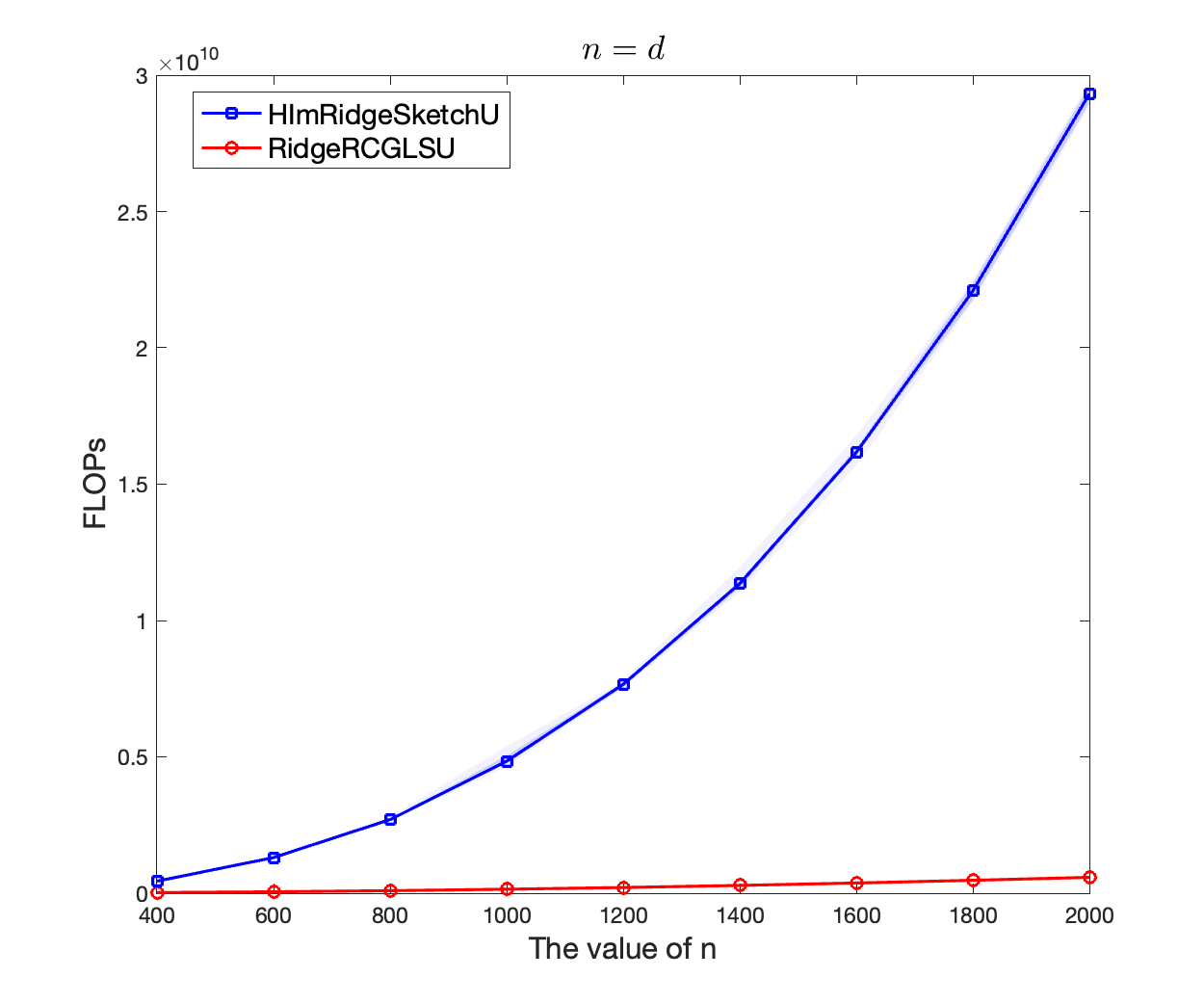}
				\includegraphics[width=0.33\linewidth]{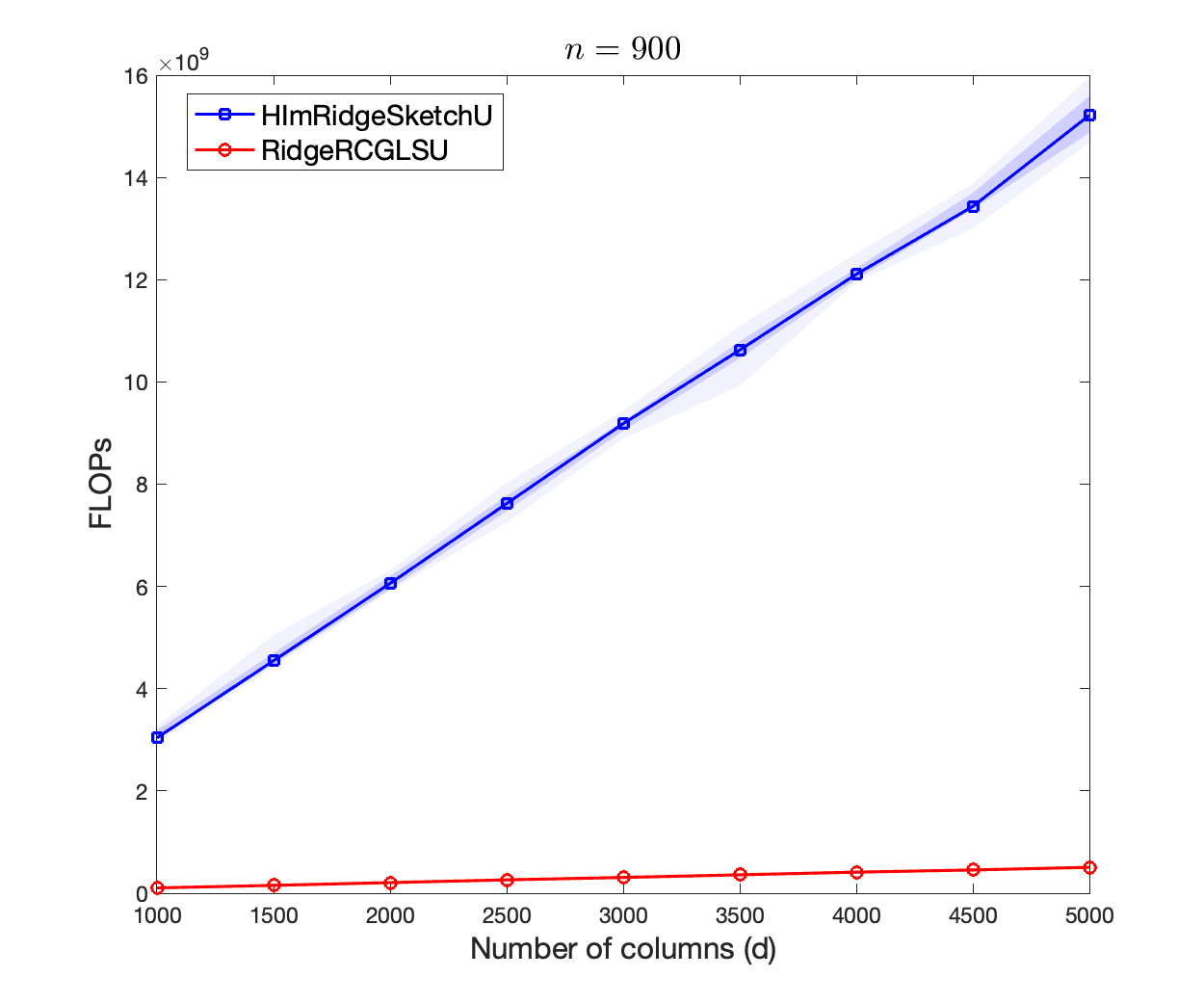}
				\\
				\includegraphics[width=0.33\linewidth]{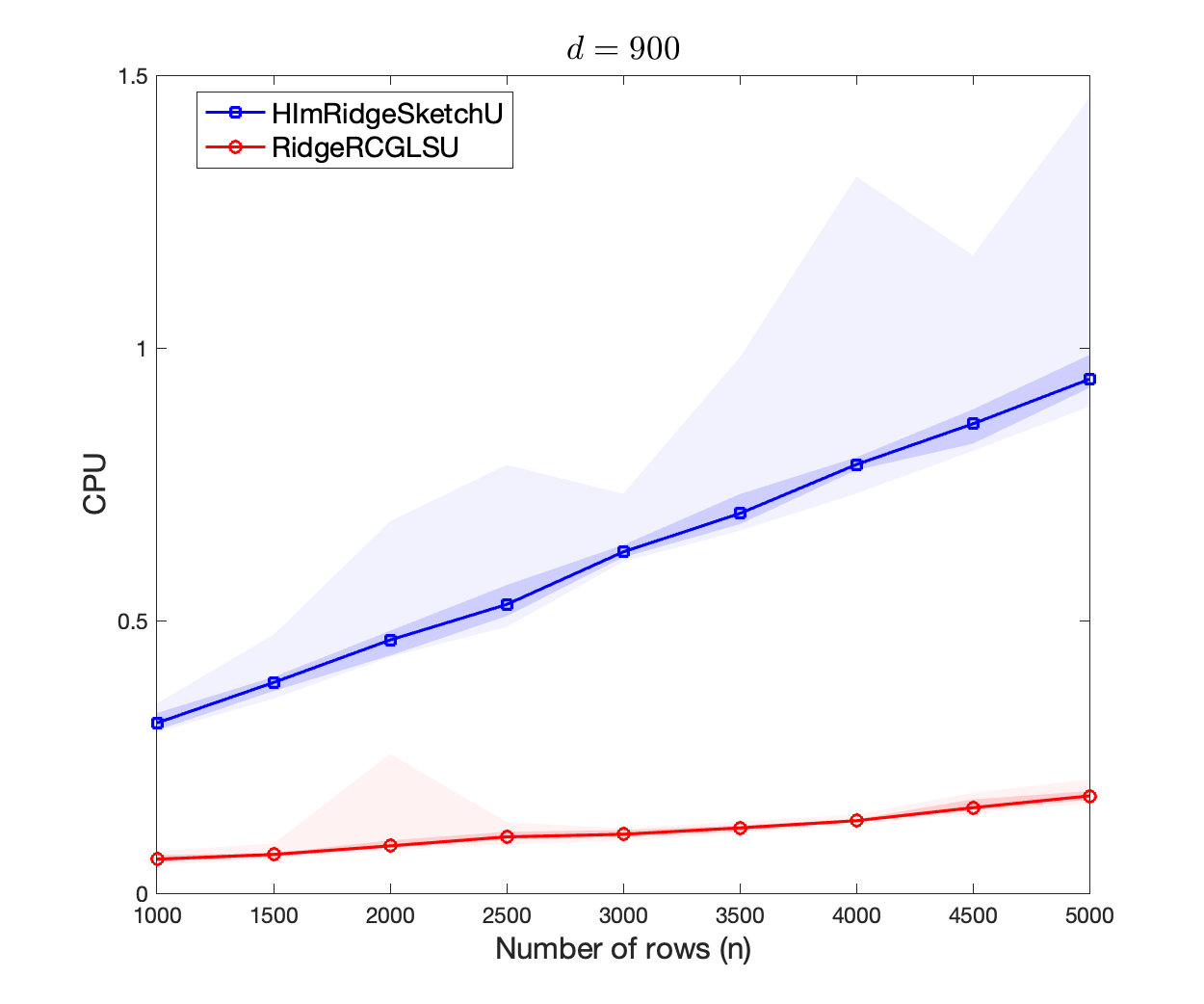}
				\includegraphics[width=0.33\linewidth]{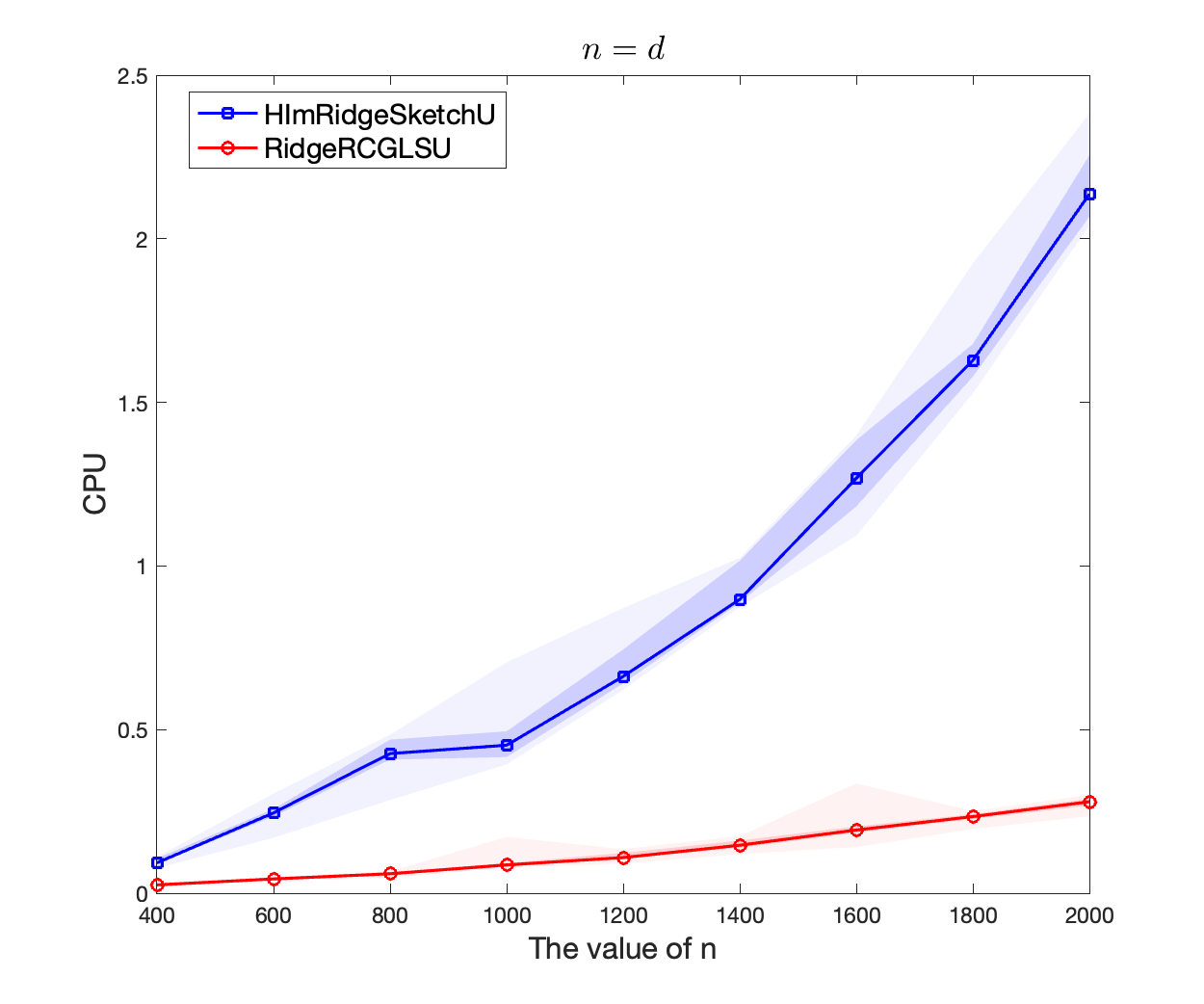}
				\includegraphics[width=0.33\linewidth]{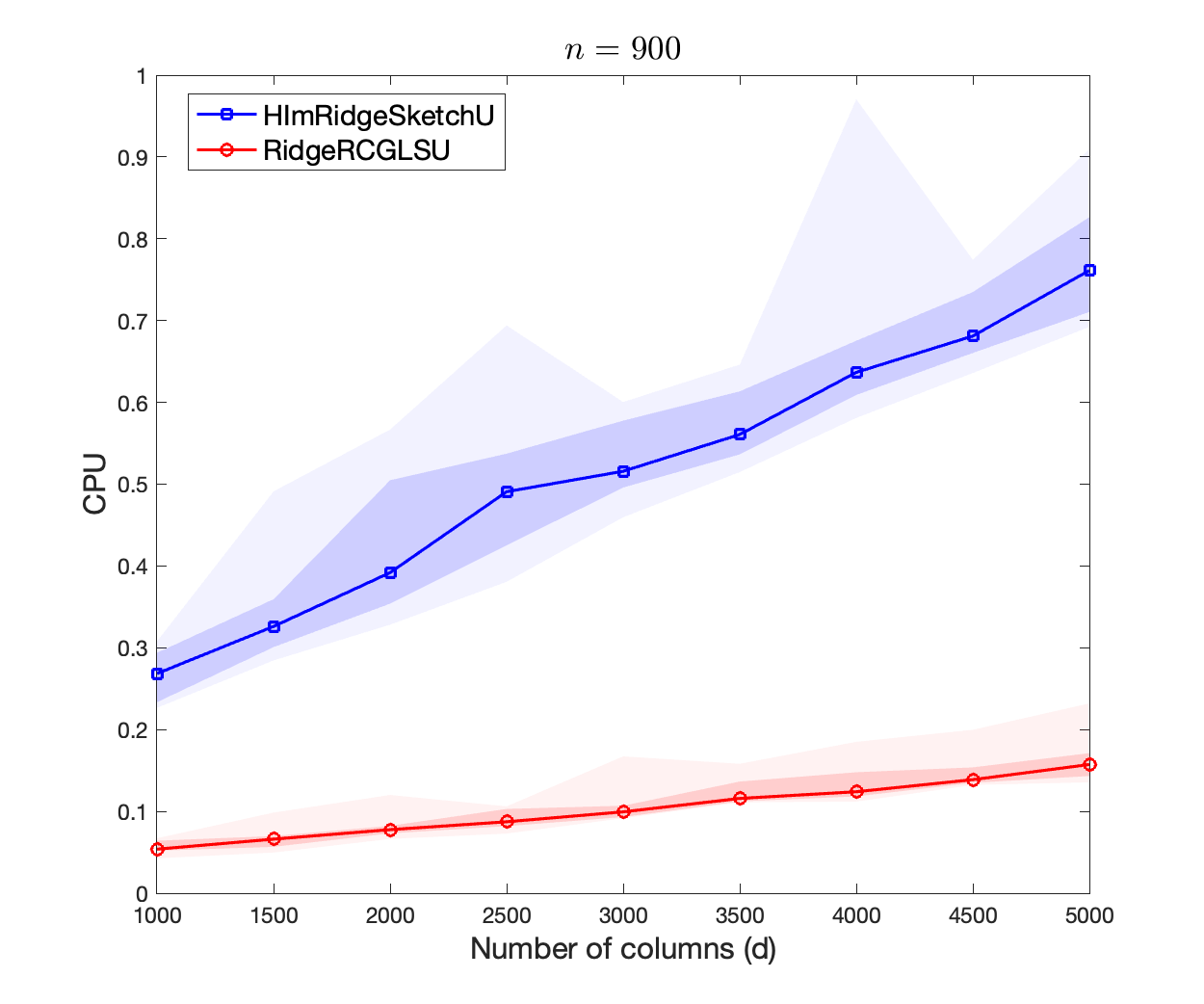}
			\end{tabular}
			\caption{Performance of HImRidgeSketchU and RidgeRCGLSU on synthetic datasets with $\lambda=0.05$. Figures depict the total flops and the CPU time across varying dimensions for overdetermined ($n>d$, left), square ($n=d$, middle), and underdetermined ($n<d$, right) cases. We set $q=50$ and terminate the algorithms once $\text{RSE}<10^{-10}$. The titles specify the fixed parameters for each case.}
			\label{figureR5}
		\end{figure}
		
		\begin{figure}[hptb]
			\centering
			\begin{tabular}{cc}
				\includegraphics[width=0.33\linewidth]{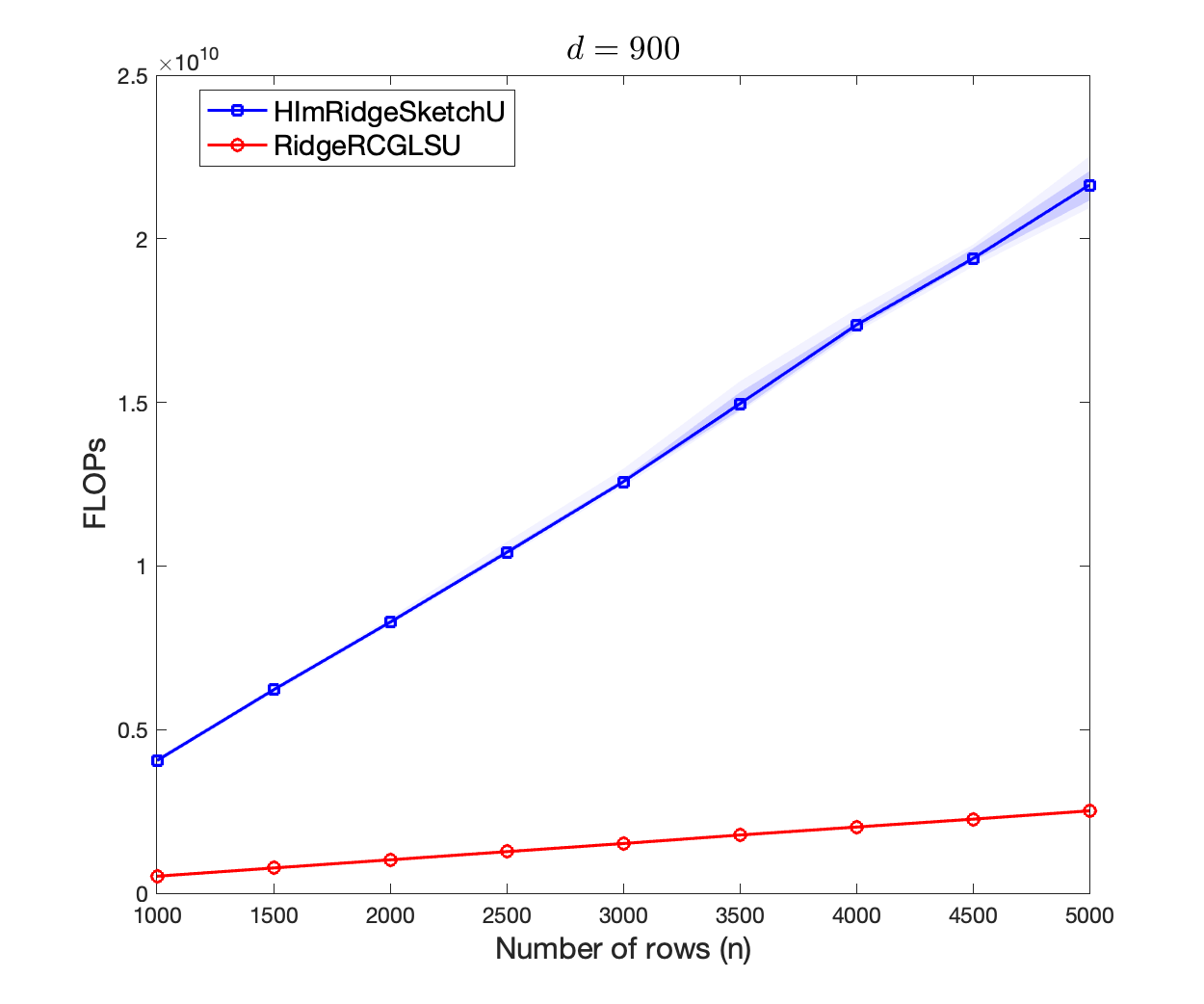}
				\includegraphics[width=0.33\linewidth]{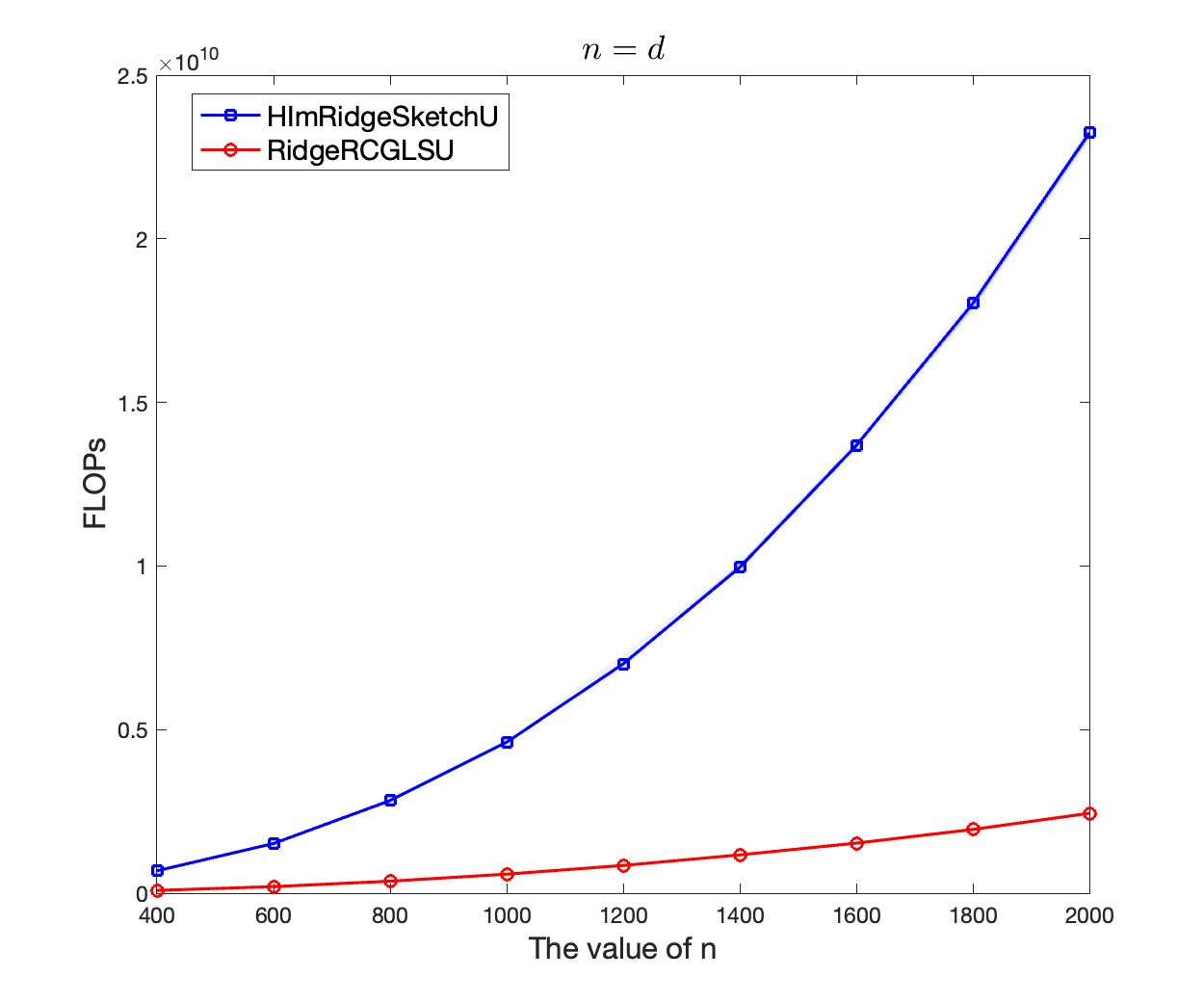}
				\includegraphics[width=0.33\linewidth]{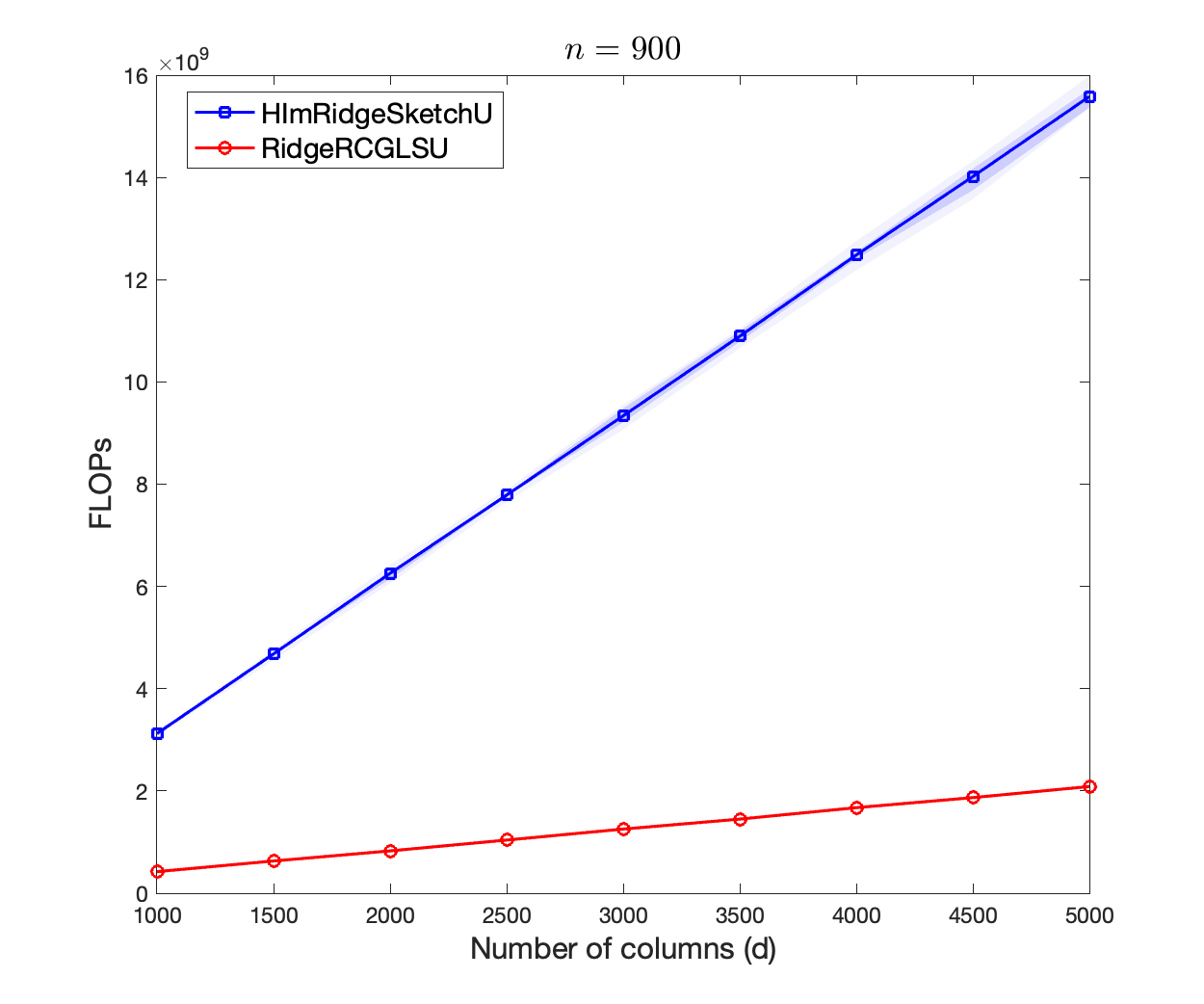}
				\\
				\includegraphics[width=0.33\linewidth]{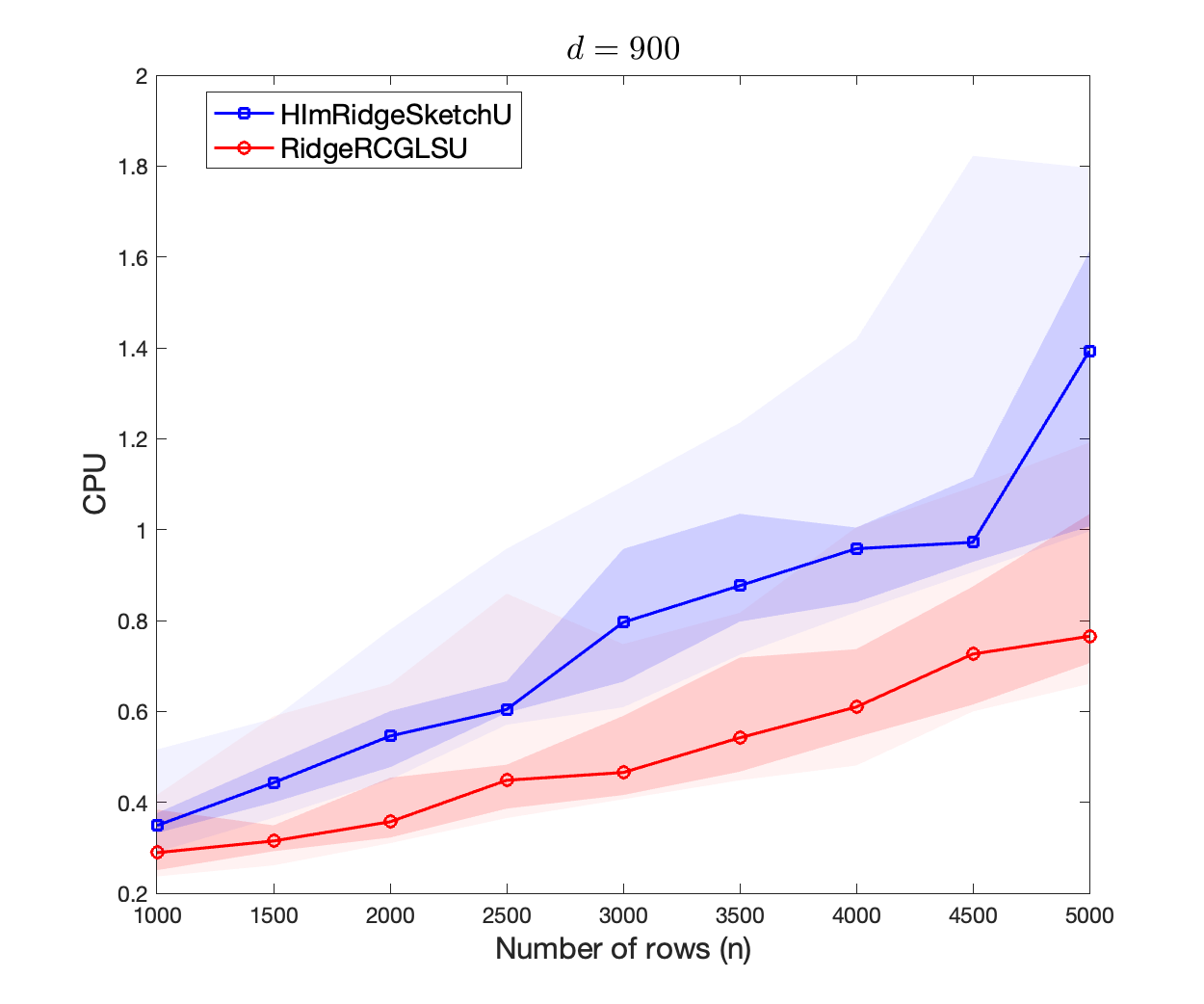}
				\includegraphics[width=0.33\linewidth]{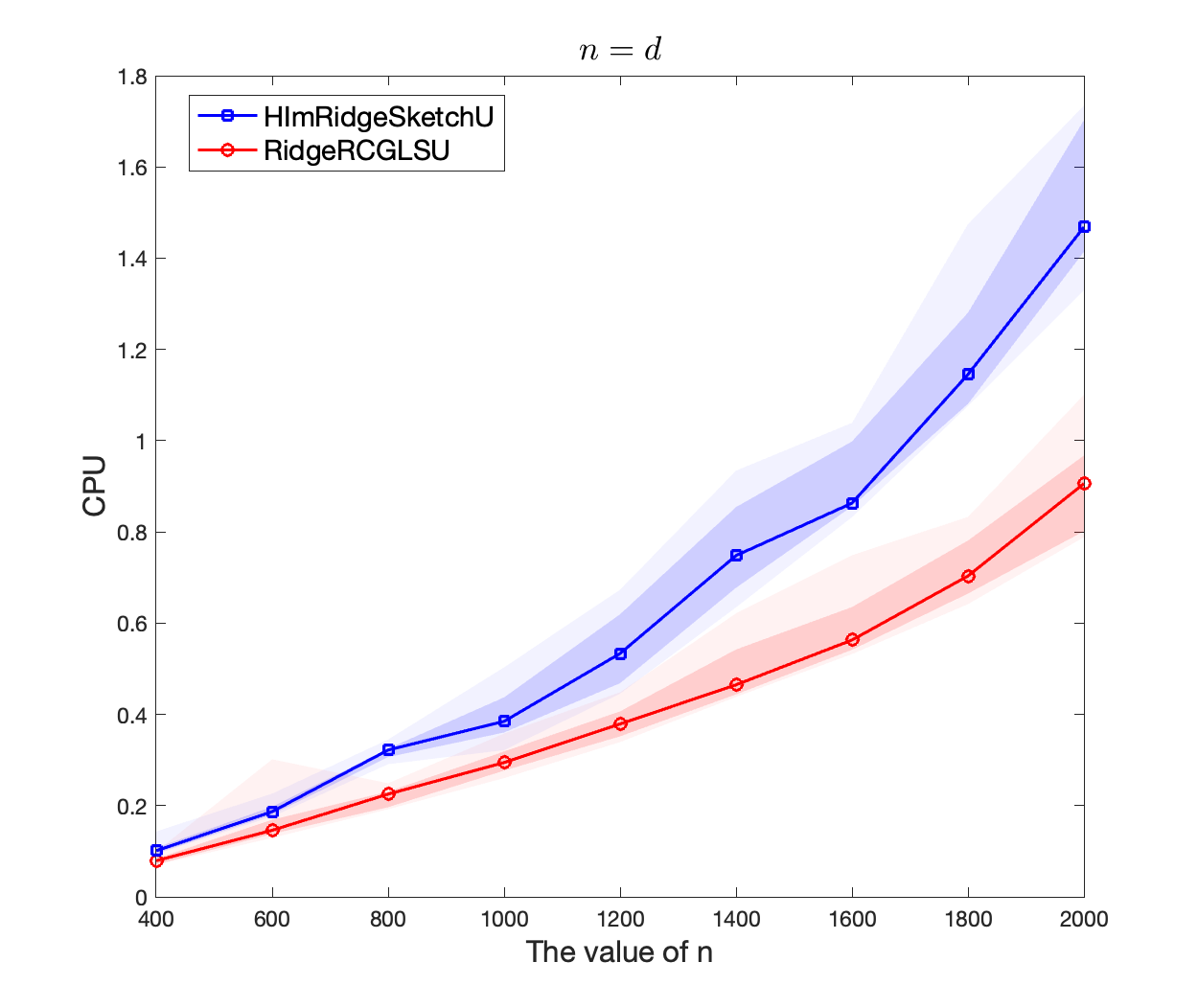}
				\includegraphics[width=0.33\linewidth]{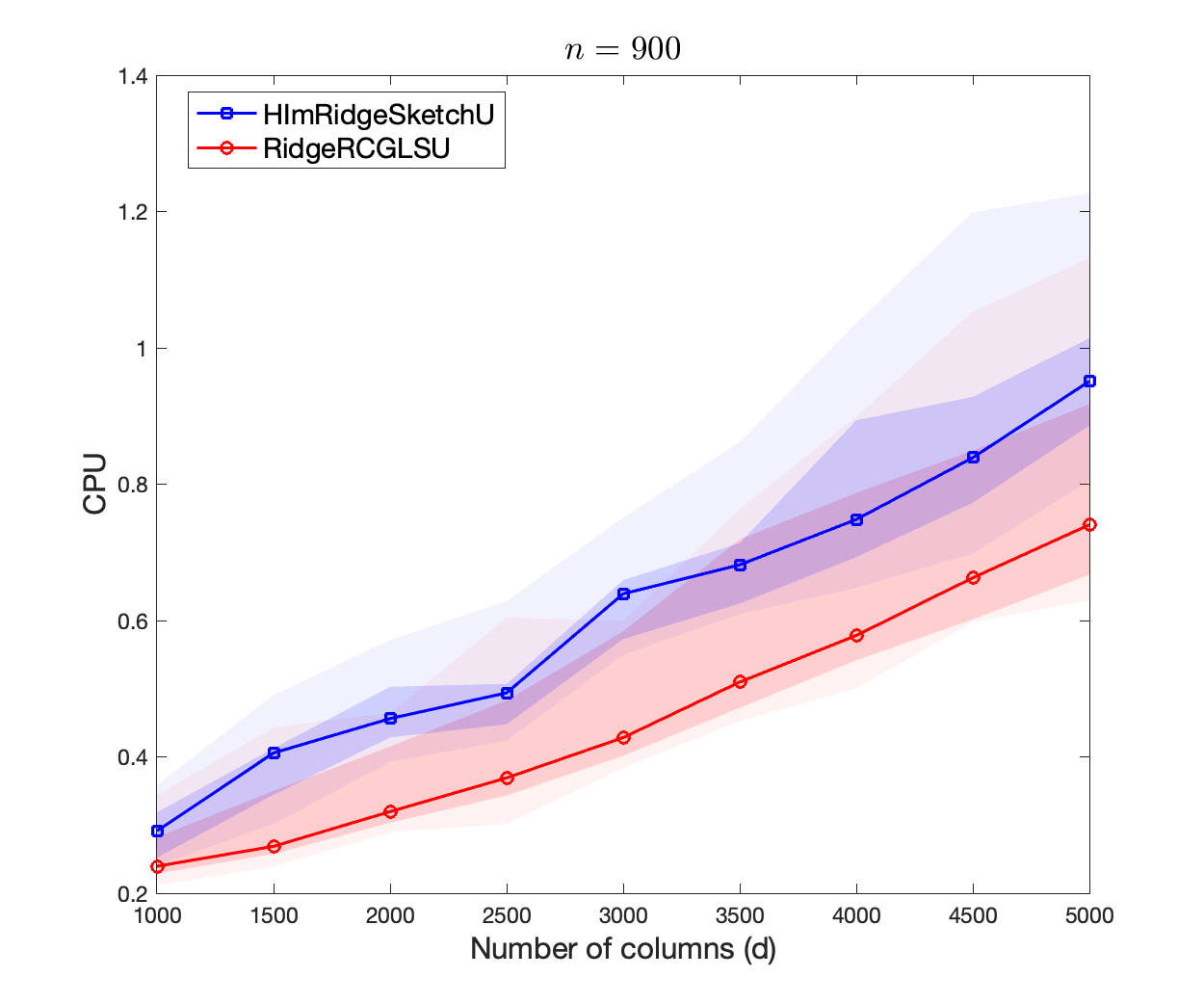}
			\end{tabular}
			\caption{Performance of HImRidgeSketchU and RidgeRCGLSU on synthetic datasets with $\lambda=0.005$. Figures depict the total flops and the CPU time across varying dimensions for overdetermined ($n>d$, left), square ($n=d$, middle), and underdetermined ($n<d$, right) cases. We set $q=50$ and terminate the algorithms once $\text{RSE}<10^{-10}$. The titles specify the fixed parameters for each case.}
			\label{figureR6}
		\end{figure}
		
		Figures \ref{figureR7} and \ref{figureR8} report the flops and CPU time of the compared methods on real-world LIBSVM datasets \cite{chang2011libsvm} with $\lambda=0.05$. 
		%
%
		It can be seen that RidgeRCGLSU outperforms HImRidgeSketchU in terms of both flops and CPU time across all real-world datasets. In particular, on the \texttt{protein} dataset, RidgeRCGLSU converges much faster to the prescribed tolerance, while HImRidgeSketchU converges considerably slowly. In addition, RidgeRCGLSU yields narrower statistical intervals over independent trials, demonstrating better numerical stability. Such superior performance benefits from the variance reduction property of the proposed gradient estimator in Remark \ref{remark-VR-RCD2}, as well as the exact line search and conjugacy constraints adopted in our algorithm.
		
		\begin{figure}[hptb]
			\centering
			\begin{tabular}{cc}
				\includegraphics[width=0.33\linewidth]{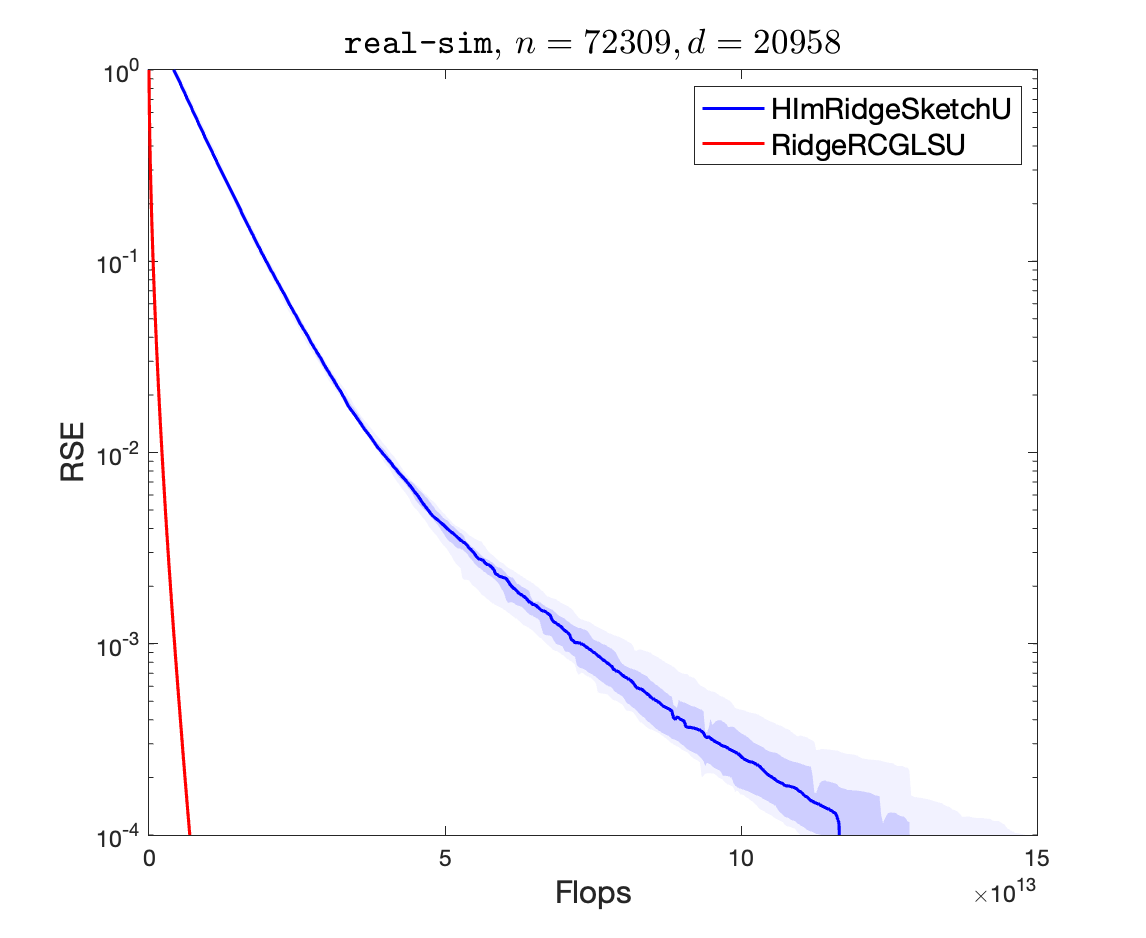}
				\includegraphics[width=0.33\linewidth]{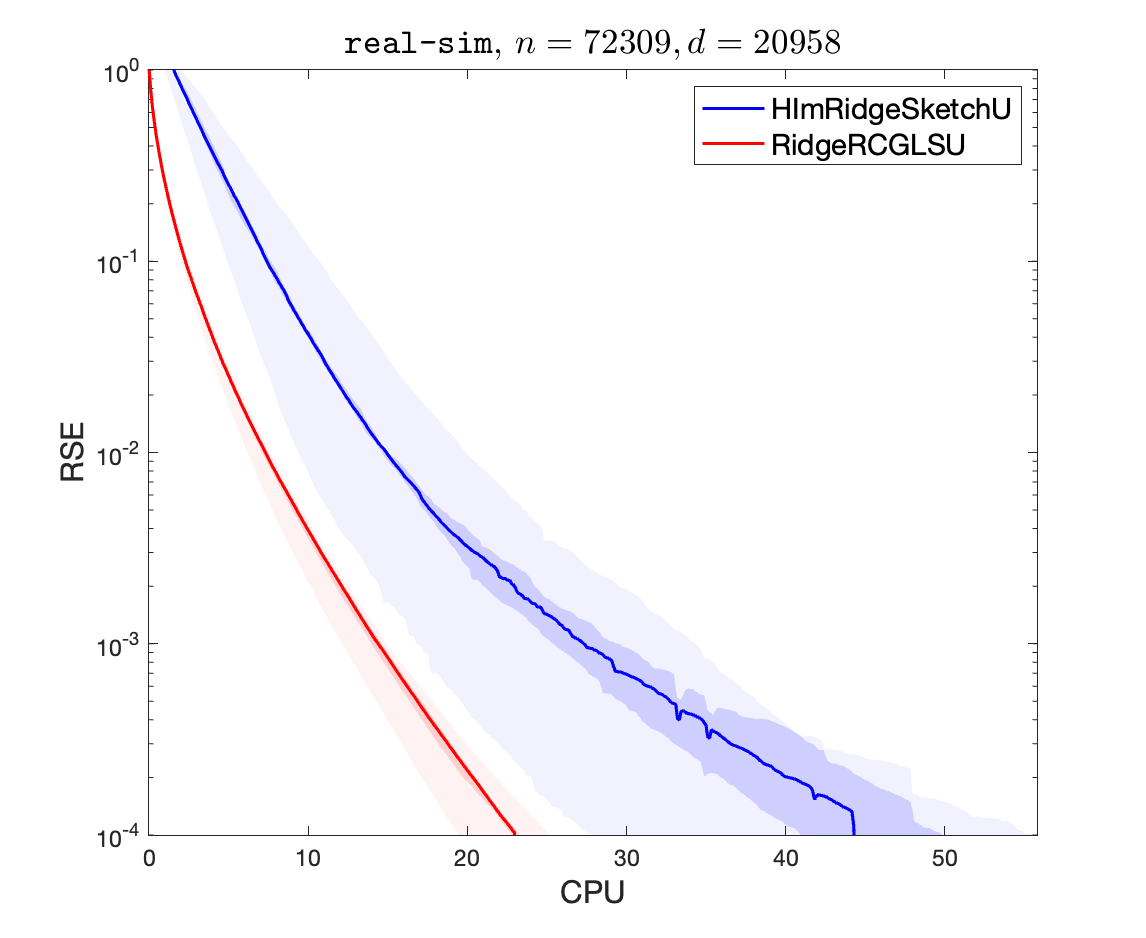}
				\\
				\includegraphics[width=0.33\linewidth]{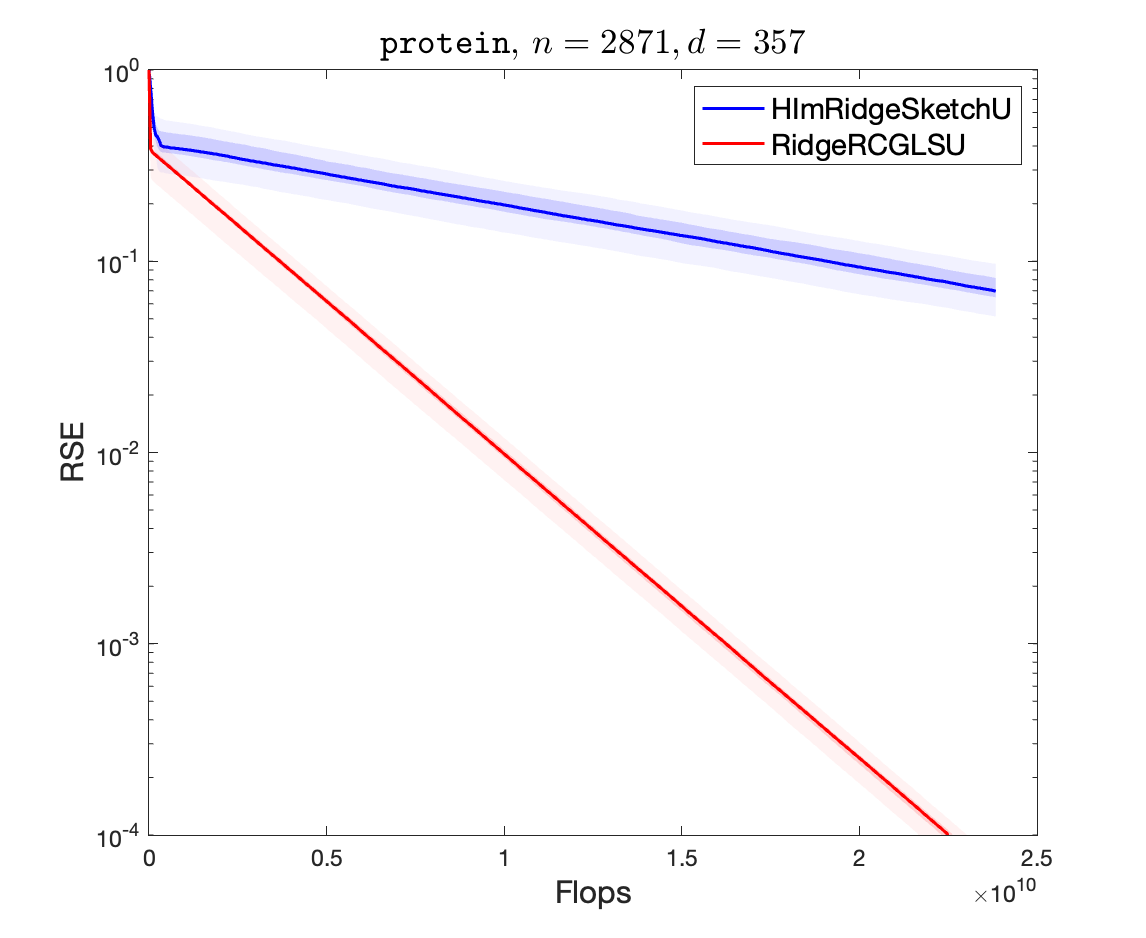}
				\includegraphics[width=0.33\linewidth]{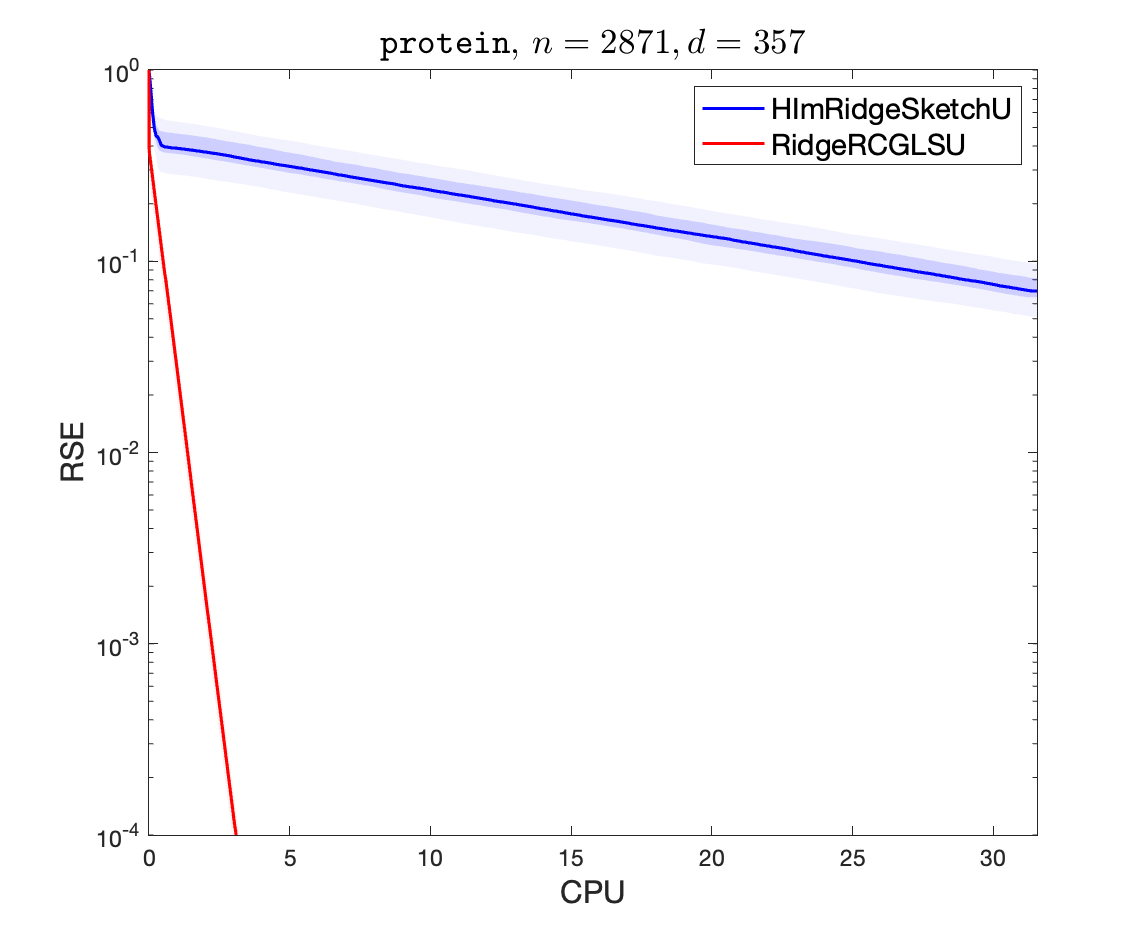}
			\end{tabular}
			\caption{Performance on overdetermined datasets from LIBSVM \cite{chang2011libsvm} with $\lambda=0.05$. Figures depict the evolution of RSE with respect to the total flops and the CPU time. We set $q=2000$ for {\tt real-sim} and $q=50$ for {\tt protein}, and terminate the algorithms once RSE $< 10^{-4}$ or the number of iterations exceeds a certain limit. The title of each plot indicates the names and sizes of the data.}
			\label{figureR7}
		\end{figure}
		
		\begin{figure}[hptb]
			\centering
			\begin{tabular}{cc}
				\includegraphics[width=0.33\linewidth]{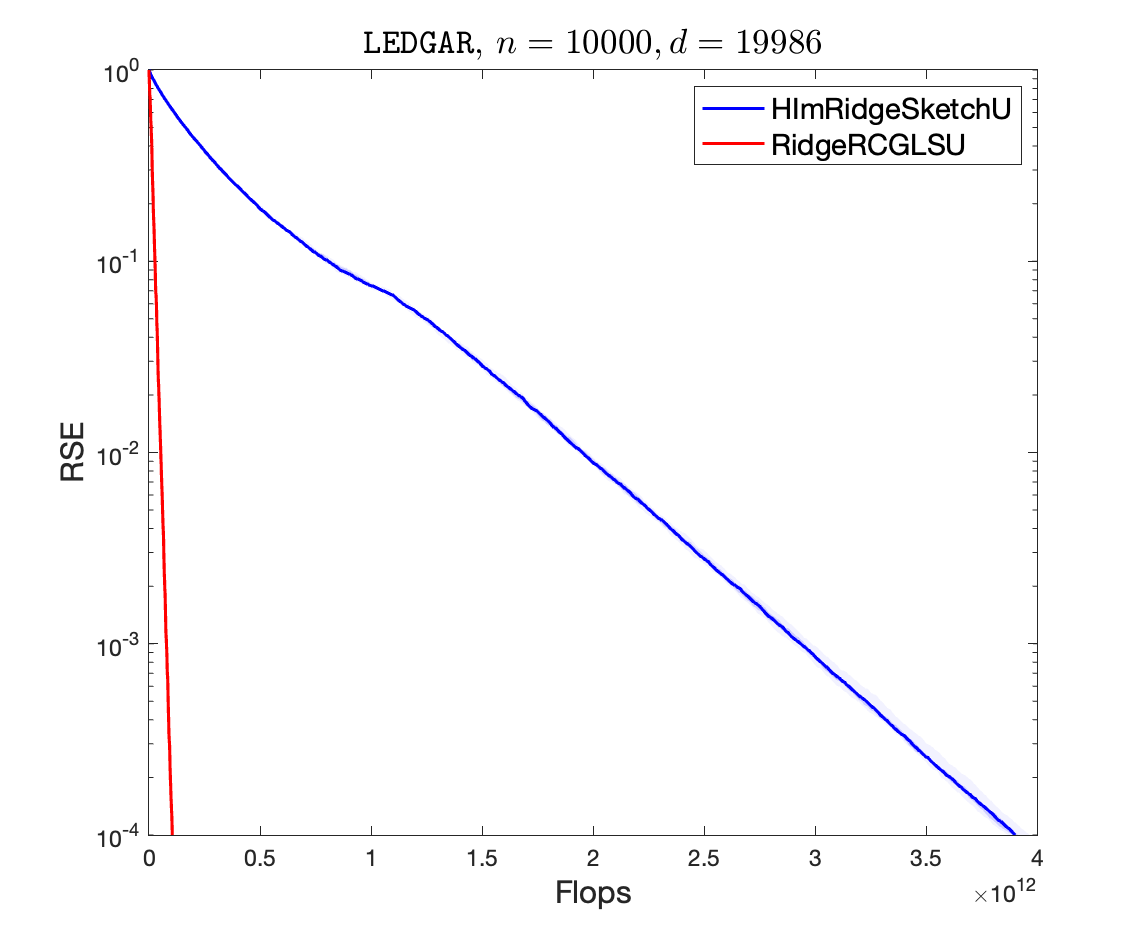}
				\includegraphics[width=0.33\linewidth]{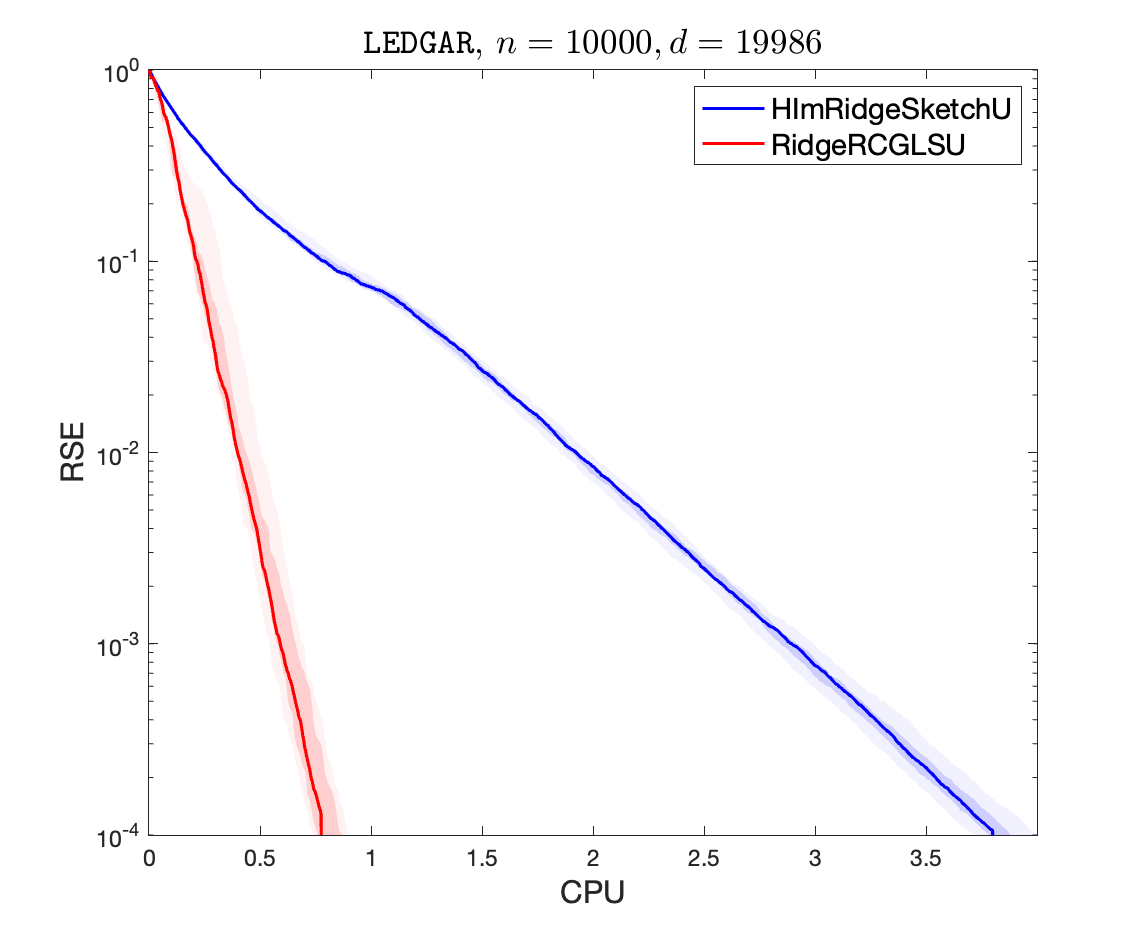}
				\\
				\includegraphics[width=0.33\linewidth]{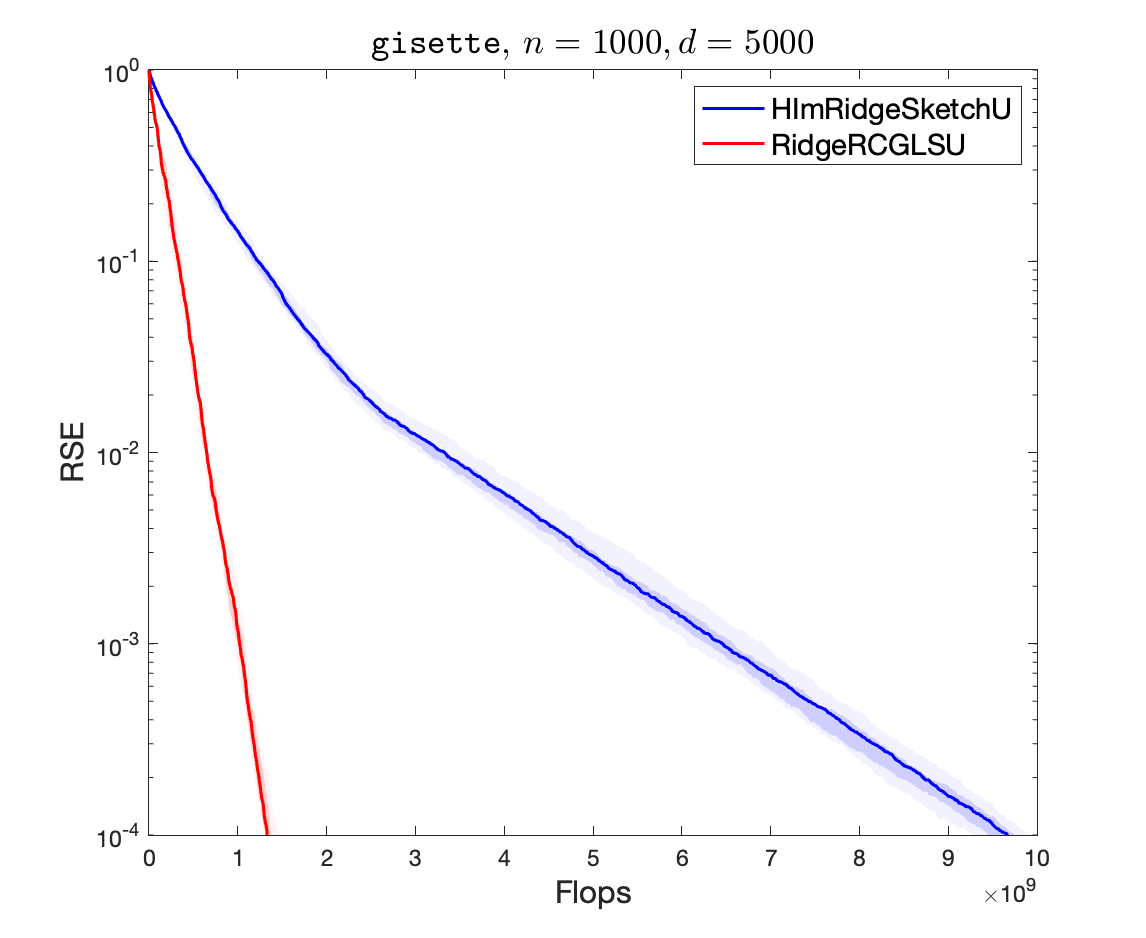}
				\includegraphics[width=0.33\linewidth]{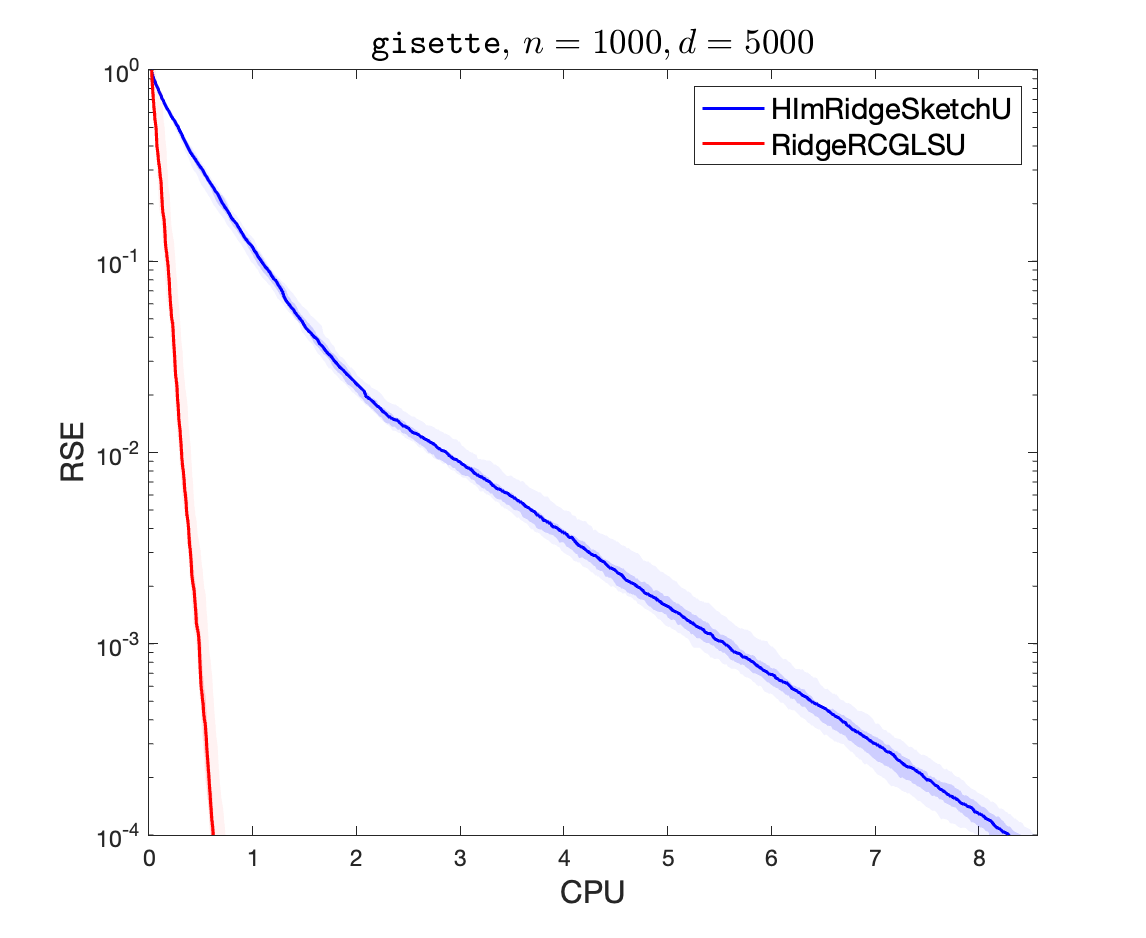}
			\end{tabular}
			\caption{Performance on underdetermined datasets from LIBSVM \cite{chang2011libsvm} with $\lambda=0.05$. Figures depict the evolution of RSE with respect to the total flops and the CPU time. We set $q=500$ for {\tt LEDGAR} and $q=50$ for {\tt gisette}, and terminate the algorithms once RSE $< 10^{-4}$. The title of each plot indicates the names and sizes of the data.}
			\label{figureR8}
		\end{figure}

	\section{Concluding remarks}
	\label{Section-6}
	
	We developed a novel RCGLS framework for least-squares problems, derived from a new reformulation of the classical CGLS method. The proposed RCGLS method achieves variance reduction by adopting randomized coordinate gradients to construct updated search directions. Theoretical analysis has verified that RCGLS exhibits a more favorable convergence factor compared with the conventional RCD method. Furthermore, we proved that RCGLS can be equivalently reformulated to substantially avoid full-dimensional operations. By exploiting the block-orthogonal structure inherent in ridge regression, we extended the RCGLS framework to RidgeRCGLS, a lightweight, parallelizable, and accelerated solver tailored for ridge regression tasks. Numerical experiments validated our theoretical results and demonstrated the superior computational efficiency of the proposed method.
	
	There are still many possible future avenues of research. It is well known that the CGLS method converges to the unique minimum Euclidean norm least-squares solution $A^\dagger b$ when starting from an initial point $x^0 \in \operatorname{Range}(A^\top)$ (e.g., $x^0 = 0$). According to Theorem \ref{RCGLS_rate}, RCGLS converges to $A^\dagger b$ when $A$ has full column rank, but only to a general least-squares solution when $A$ is rank-deficient. A potential direction for future research is to integrate the proposed method with the REGS scheme \cite{ma2015convergence,Du19} to ensure convergence to the minimum-norm solution $A^\dagger b$ in the rank-deficient case. Furthermore, the core idea of the RCGLS framework can be extended to solve general convex quadratic problems of the form $f(x) = \frac{1}{2}x^\top H x + b^\top x$, where $H$ is symmetric positive definite. Such an extension would lead to randomized CG methods for a broader class of convex quadratic optimization tasks, which would also be a valuable topic for future investigation.

	\bibliography{zeng2023}
	
	

 \end{document}